\DeclareRobustCommand{\SkipTocEntry}[9]{}
\numberwithin{equation}{section}
\newcommand{\be}{\beta}
\renewcommand{\Im}{{\operatorname{Im}}}
\newcommand{\ord}{\operatorname{ord}\nolimits}
\newcommand{\CC}{{\mathbb{C}}}
\newcommand{\PP}{{\mathbb{P}}}
\newcommand{\RR}{{\mathbb{R}}}
\newcommand{\ZZ}{{\mathbb{Z}}}
\newcommand{\VV}{{\mathbb{V}}}
\newcommand{\calO}{{\mathcal O}}
\newcommand{\calC}{{\mathcal C}}
\newcommand{\calL}{{\mathcal L}}
\newcommand{\calQ}{{\mathcal Q}}
\newcommand{\calX}{{\mathcal X}}
\newcommand{\calY}{{\mathcal Y}}
\newcommand{\calD}{{\mathcal D}}
\newcommand{\op}{\operatorname}
\newcommand{\SL}{\op{SL}}
\newcommand{\ab}{\operatorname{ab}}
\newcommand{\nonab}{\operatorname{non-ab}}
\newcommand{\proj}{{\mathbb P}}
\newcommand\Res{\operatorname{Res}}
\newcommand{\barmoduli}[1][g]{{\overline{\mathcal M}}_{#1}}
\newcommand{\moduli}[1][g]{{\mathcal M}_{#1}}
\newcommand{\omoduli}[1][g]{{\Omega\mathcal M}_{#1}}
\newcommand{\oG}{\overline{\Gamma}}
 \newcommand{\divisor}[1]{{\rm div }\left( #1 \right)}
\newcommand{\rom}[1]{\textup{\uppercase\expandafter{\romannumeral#1}}}
\theoremstyle{plain}
\newtheorem{thm}{Theorem}[section]
\newtheorem{lm}[thm]{Lemma}
\newtheorem{prop}[thm]{Proposition}
\newtheorem{cor}[thm]{Corollary}
\theoremstyle{definition}
\newtheorem{df}[thm]{Definition}
\newtheorem{exa}[thm]{Example}
\def\be{\begin{equation}}   \def\ee{\end{equation}}     \def\bes{\begin{equation*}}    \def\ees{\end{equation*}}
\def\ba{\be\begin{aligned}} \def\ea{\end{aligned}\ee}   \def\bas{\bes\begin{aligned}}  \def\eas{\end{aligned}\ees}
\def\={\;=\;}  \def\+{\,+\,} 
\newcommand{\whX}{\widehat{X}}
\newcommand{\whY}{\widehat{Y}}
\newcommand{\whP}{\widehat{P}}
\newcommand{\whZ}{\widehat{Z}}
\newcommand{\whg}{\widehat{g}}
\newcommand{\whmu}{\widehat{\mu}}
\newcommand{\whLa}{\widehat{\Lambda}}
\newcommand{\whq}{\widehat{q}}
\newcommand{\wh}[1]{{\widehat{#1}}}
\newcommand{\divxi}{E}
\newcommand{\divxired}{E_{\rm red}}
\newcommand{\komoduli}[1][g]{{\Omega^k\mathcal M}_{#1}}
\newcommand{\komoduliab}[1][\mu]{\Omega^{k,{\operatorname{ab}}}\mathcal{M}_{g}(#1)}
\newcommand{\komodulinoab}[1][\mu]{\Omega^{k,{\operatorname{non-ab}}}\mathcal{M}_{g}(#1)}
\newcommand{\tkdab}[1][\overline{\Gamma}]{\mathfrak{W}^{k, \operatorname{ab}}(#1)}
\newcommand{\tkdnoab}[1][\overline{\Gamma}]{\mathfrak{W}^{k, \operatorname{non-ab}}(#1)}
\newcommand{\tdab}[1][\overline{\Gamma}]{\mathfrak{W}^{\operatorname{ab}}(#1)}
\newcommand{\kobarmoduli}[1][g]{{\Omega^k\overline{\mathcal M}}_{#1}}
\newcommand{\kobarmodulin}[1][g,n]{{\Omega^k\overline{\mathcal M}}_{#1}}
\newcommand{\kivc}[2][g,n]{{\PP\Omega^k{\overline{\mathcal M}}_{#1}^{{\rm inc}}(#2)}}
\newcommand{\Resk}[1][k]{\Res^{#1}}
\newcommand{\cplxliek}{\mathcal{C}^{\bullet}(\Lied)}
\newcommand{\Lied}[1][\xi]{\mathcal{L}_{#1}}
\newcommand{\whomega}{\wh\omega}
\newcommand{\whgrc}{${\rm \widehat{g}}$lobal residue condition}
\newcommand{\hk}{k}
\begin{document}
\title[Strata of $\protect\hk$-differentials]{Strata of $k$-differentials}

\author[M. Bainbridge]{Matt Bainbridge}
\email{mabainbr@indiana.edu}
\address{Department of Mathematics, Indiana University, Bloomington, IN 47405, USA}

\author[D. Chen]{Dawei Chen}
\email{dawei.chen@bc.edu}
\address{Department of Mathematics, Boston College, Chestnut Hill, MA 02467, USA}

\author[Q. Gendron]{Quentin Gendron}
\email{gendron@mpim-bonn.mpg.de}
\address{Institut f\"ur algebraische Geometrie, Leibniz Universit\"at Hannover, Welfengarten 1,
30167 Hannover, Germany}
\curraddr{Max Planck institut, Vivatgasse 7, 53111 Bonn, Germany}

\author[S. Grushevsky]{Samuel Grushevsky}
\email{sam@math.stonybrook.edu}
\address{Mathematics Department, Stony Brook University,
Stony Brook, NY 11794-3651, USA}

\author[M. M\"oller]{Martin M\"oller}
\email{moeller@math.uni-frankfurt.de}
\address{Institut f\"ur Mathematik, Goethe-Universit\"at Frankfurt, Robert-Mayer-Str. 6-8,
60325 Frankfurt am Main, Germany}

\thanks{Research of the second author is supported in part by the National Science Foundation under the CAREER grant DMS-13-50396. Research of the fourth author was supported in part by the National Science Foundation under the grant DMS-15-01265, and by a Simons Fellowship in Mathematics (Simons Foundation grant \#341858 to Samuel Grushevsky)}

\begin{abstract}
A $k$-differential on a Riemann surface is a section of the $k$-th power of the canonical line bundle. Loci of $k$-differentials with prescribed number and multiplicities of zeros and poles form a natural stratification of  the moduli space of $k$-differentials. In this paper we give a complete description for the compactification of the strata of $k$-differentials  in terms of pointed stable $k$-differentials, for all $k$. The upshot is a global $k$-residue condition that can also be reformulated in terms of admissible covers of stable curves. Moreover, we study properties of $k$-differentials regarding their deformations, residues, and flat geometric structure.
\end{abstract}

\date{\today}

\maketitle
\tableofcontents

\section{Introduction}

Let $X$ be a smooth complex curve of genus $g$, and let $K_X$ be the canonical line bundle on~$X$. For a positive integer $k$, a non-zero $k$-differential $\xi$ on $X$ is a non-trivial section of $K_X^k$, namely,~$\xi$ can be written locally as $f(z) (dz)^k$, where $f(z)$ is a (possibly meromorphic) function of a local coordinate $z$, and the expression is invariant under change of coordinates. Abelian and quadratic differentials, corresponding to the cases $k=1$ and $k=2$ respectively, exhibit fascinating geometry that arises from associated flat structure and $\SL_2(\RR)$-action --- we refer to the surveys \cite{zorich, wrightSurvey, Bootcamp} for these topics and recent advances.
\par
Let $\mu = (m_1, \ldots, m_n)$ be a tuple of integers such that their sum is $k(2g-2)$ --- we call $\mu$ a partition of $k (2g-2)$ (allowing non-positive entries). If a $k$-differential $\xi$ has exactly~$n$ zeros and poles of respective orders $m_1, \ldots, m_n$, we say that $\xi$ is of type $\mu$. Denote by $\komoduli$ the $k$-th Hodge bundle over $\moduli[g]$ whose fiber over a smooth genus $g$ curve $X$ is the space $H^0(X, K_X^{k})$ of holomorphic $k$ differentials on $X$. The spaces of holomorphic $k$-differentials of all possible zero types form a stratification of $\komoduli$ (and for the meromorphic case one needs to twist by the polar part to define the $k$-th Hodge bundle, see Section~\ref{subsec:stablek} for more details). In this sense, denote by $\komoduli(\mu)$ the stratum parameterizing $k$-differentials of type $\mu$. More precisely, $\komoduli(\mu)$ parametrizes $(X, \xi, z_1, \ldots, z_n)$ where $X$ is a smooth curve of genus $g$, $z_1, \ldots, z_n$ are distinct marked points, and $\xi$ is a $k$-differential with zeros and poles at $z_i$ such that $\ord_{z_i} \xi = m_i$. In this paper we study the geometry of $\komoduli(\mu)$, and in particular, its compactification. Note that for $X$ smooth, $\xi$ determines its zeros and poles, but we mark these points as it will help us study degenerations of $\xi$.
\par
There are similarities, but also fundamental
differences, between the cases $k=1$, $k=2$, and $k \geq 3$. Since the
$k$-th roots of unity are real only for $k \leq 2$, precisely in these
cases the $\SL_2(\RR)$-action on the strata is well-defined by varying the corresponding flat structure. Technically, however, the case $k=1$ is further different from $k \geq 2$. First, the dimension formula is different in the case of holomorphic abelian differentials compared to all the other cases (see Theorem~\ref{thm:introdim} below).
Moreover, the global residue condition, which is the upshot in the characterization of the strata compactification, differs significantly for $k=1$ and for any $k \geq 2$ (compare Definitions~\ref{def:twistedAbType} and~\ref{def:GRCk}).
\par
Recall that period coordinates provide local coordinates for the strata of abelian and quadratic differentials \cite{HubbardMasur}. In particular, the existence of period coordinates implies the smoothness of the strata in these cases, and gives the dimension of the strata. We will show that period coordinates can also be used for the strata of $k$-differentials for all $k$. Our first result is as follows.
\par
\begin{thm} \label{thm:introdim}
Every connected component of the stratum $\komoduli(\mu)$ is a smooth orbifold. If the component parameterizes $k$-th powers of holomorphic abelian differentials, it has dimension $2g-1+n$. Otherwise it has dimension $2g-2 + n$.
\end{thm}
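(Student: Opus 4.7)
The plan is to reduce to the case of abelian differentials via the canonical cyclic $k$-cover, and then apply classical period coordinates on the cover. For $k=1$, the stratum $\omoduli(\mu)$ is locally parametrized by integration of $\omega$ along a basis of $H_1(X, Z; \ZZ)$, where $Z = \{z_1, \dots, z_n\}$, yielding smoothness and dimension $\dim H^1(X, Z; \CC) = 2g - 1 + n$.

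For $k \geq 2$, the first step is to construct the canonical $k$-cover. Given $(X, \xi, z_1, \dots, z_n) \in \komoduli(\mu)$, let $\pi \colon \wh{X} \to X$ be the normalization of $\{(p, \alpha) \in K_X : \alpha^k = \xi(p)\}$, so that $\pi^*\xi = \wh\omega^k$ for a tautological abelian differential $\wh\omega$ on $\wh{X}$. The deck group $G = \ZZ/k\ZZ$ acts on $\wh{X}$, sending $\wh\omega$ to $\zeta\wh\omega$ for a generator and a chosen primitive $k$-th root of unity $\zeta$. The ramification above $z_i$ is controlled by $\gcd(m_i, k)$, and the cover is disconnected precisely when $\xi$ is globally the $k$-th power of an abelian differential on $X$, in which case $\wh{X}$ splits into $k$ copies of $X$ permuted cyclically by $G$.

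Next, apply classical period coordinates to $\wh\omega$: local deformations of $\wh\omega$ as an abelian differential on $\wh{X}$ with prescribed zero/pole orders at $\wh{Z} := \pi^{-1}(Z)$ are parametrized by $H^1(\wh{X}, \wh{Z}; \CC)$. Those deformations that descend to deformations of $\xi$ as a $k$-differential on $X$ are exactly the $G$-equivariant ones, lying in the $\zeta$-eigenspace $H^1(\wh{X}, \wh{Z}; \CC)^{\zeta}$. This identifies $\komoduli(\mu)$ locally near $(X, \xi)$ with a linear space, establishing smoothness as an orbifold (the orbifold structure coming from automorphisms of $(X, \xi)$).

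Finally, the dimension is computed from the character decomposition of $H^1(\wh{X}, \wh{Z}; \CC)$ under $G$. Using the long exact sequence of the pair $(\wh{X}, \wh{Z})$ together with the Chevalley--Weil formula for $H^1(\wh{X}; \CC)$ and a direct calculation on the permutation representation $H^0(\wh{Z}; \CC)$, one finds $\dim H^1(\wh{X}, \wh{Z}; \CC)^{\zeta} = 2g - 2 + n$ in the connected case. When $\wh{X}$ splits into $k$ copies of $X$, the $\zeta$-eigenspace is canonically isomorphic to $H^1(X, Z; \CC)$, of dimension $2g - 1 + n$, yielding the abelian case. The main obstacle will be showing rigorously that the $\zeta$-eigenspace upstairs truly parametrizes deformations of $\xi$ downstairs --- i.e., that the canonical cover varies holomorphically in families and that the equivariant period map is a local diffeomorphism of orbifolds, not merely a bijection. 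Careful book-keeping of the ramification profile, which depends on $\gcd(m_i, k)$ and forces a subtle analysis of the fixed loci of $G$ on $\wh{X}$, is the other technical hurdle.
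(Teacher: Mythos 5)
Your overall strategy (pass to the canonical $k$-cover and read off equivariant period coordinates) is the route the paper takes for Theorem~\ref{thm:periodCoorkDiff} and Corollary~\ref{cor:period}, but it is \emph{not} how the paper proves smoothness and the dimension formula. The paper's proof (Theorem~\ref{thm:DeformkDiff}) identifies the tangent space with the first hypercohomology of the two-term complex $T_X(-\divisor{\xi}_{\rm red})\to K_X^k(-\divisor{\xi})$ given by the $k$-Lie derivative, gets $h^1=2g-2+n+h^2$ from Riemann--Roch, computes $h^2$ by Serre duality (reducing to the local ODE $(k-1)a'c+kac'=0$, whose global solutions exist only when $\xi$ is the $k$-th power of a holomorphic abelian differential), and deduces unobstructedness from $h^2=0$. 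No Chevalley--Weil computation is needed.

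As written, your proposal has genuine gaps. First, the canonical cover is disconnected precisely when $\xi$ is \emph{non-primitive}, i.e.\ a $d$-th power of a $(k/d)$-differential for some divisor $d>1$ of $k$ --- not only when it is a full $k$-th power of an abelian differential. For instance, the square of a primitive quadratic differential viewed as a $4$-differential has a two-component canonical cover; your dichotomy (connected cover vs.\ $k$ disjoint copies) omits all such intermediate cases, which must be handled by reducing to the primitive $(k/d)$-differential. Second, in the meromorphic setting the relevant group is not $H^1(\whX,\whZ;\CC)$: preimages of marked points with $m_i\le -k$ are poles of $\whomega$ and must be \emph{deleted} rather than placed in the relative part (this is why the paper works with $H^{1}(\whX,\wh{\imath}_*(\wh\jmath_{!}\CC))$, equivalently $H_1(\whX\setminus\whP,\whZ;\CC)$). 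Your count, and also your $k=1$ base case ``$\dim H^1(X,Z;\CC)=2g-1+n$'', is wrong whenever such poles are present: the correct dimension there is $2g-2+n$, and likewise $\xi=\omega^k$ with $\omega$ strictly meromorphic gives $2g-2+n$, not $2g-1+n$. Third --- and you flag this yourself --- the identification of first-order deformations of $(X,\xi)$ with the $\zeta$-eigenspace, together with the fact that the equivariant period map is a local biholomorphism (hence that deformations are unobstructed and the stratum is smooth), is precisely the content of the theorem; deferring it leaves the smoothness claim unproved. The paper supplies exactly this step by showing the Lie-derivative complex is quasi-isomorphic to a rank-one local system whose pullback to $\whX$ is $\wh\jmath_{!}\CC$, and by verifying the vanishing of the obstruction space.
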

\par
This theorem in particular implies that every connected component of $\komoduli(\mu)$ is irreducible.
For $k=1$, the strata of abelian differentials can have up to three connected components, caused by hyperelliptic and spin structures, see~\cite{kozo1}. We remark that for $k > 1$, the strata of $k$-differentials can have more connected components due to another reason --- for a common divisor~$d$ of $m_1, \ldots, m_n$ and $k$, the locus of $d$-th powers of $(k/d)$-differentials of type $\mu/d$ provides connected components for $\komoduli(\mu)$
(see Section~\ref{sec:DefOfMeroDiff} for more details).
\par
For the question of compactifying $\komoduli(\mu)$, we follow the same setup as
in the case of $k=1$ treated in~\cite{plumb}. First consider the case when $\mu = (m_1,\ldots,m_n)$ is of holomorphic type, i.e., when all $m_i \geq 0$.
Consider the pullback $\komoduli[g,n]$ of $\komoduli$ from $\moduli[g]$ to the moduli space $\moduli[g,n]$ of $n$-pointed genus $g$ curves.
Then $\komoduli[g,n]$ extends to a vector bundle
$\kobarmoduli[g,n]$ over the Deligne-Mumford compactification $\barmoduli[g,n]$, which parameterizes {\em pointed stable $k$-differentials}. More precisely, let $f: \calC \to \barmoduli[g,n]$ be the universal curve and $\omega_f$ the relative dualizing sheaf. Then we have $\kobarmoduli[g,n] = f_{*} (\omega_f^{ k})$, which is a direct generalization of the total space of the Hodge bundle in the case $k = 1$.  Taking the closure of the projectivized stratum $\PP\komoduli[g](\mu)$ in the projective bundle $\PP\kobarmoduli[g,n]$, parameterizing $k$-differentials up to scaling by a non-zero complex number, we thus obtain a natural compactification $\kivc{\mu}$, called the {\em incidence variety compactification} of the projectivized stratum $\proj\komoduli[g](\mu)$. If some $m_i < 0$, we twist $\omega_f^{k}$ by the polar part of $\mu$ and then carry out the same construction (see Section~\ref{sec:stablek} for more details).
\par
As in the case $k=1$ treated in \cite{plumb}, the idea of the incidence
variety compactification is to record the limit location of zeros and
poles on those components where the limit differential vanishes identically
and to keep track of
the relative scale of those components where the limit differential
is non-zero.
\par
Our characterization of the boundary points of $\kivc{\mu}$ is in terms of
twisted $k$-differentials compatible with a full order (also called a level graph)
on the set of irreducible components of a pointed stable curve. Simply speaking, a twisted $k$-differential
$\eta = \{\eta_v\}$ of type~$\mu$ on a nodal curve~$X$ is a collection
of $k$-differentials on the irreducible components~$X_v$ of~$X$ satisfying the conditions:
(0) vanishing on the smooth locus of $X$ as prescribed
by $\mu$, (1) matching zero and pole orders at each node, and (2) matching residues
at poles of order~$k$ (see Definition~\ref{df:twdk} for more details). This definition of twisted $k$-differentials
is a direct generalization of the case $k=1$ in \cite{plumb}.
\par
To state the extra compatibility conditions that characterize twisted $k$-differentials that lie in the boundary of
$\kivc{\mu}$, we first recall how it works for abelian differentials, i.e., for the case $k=1$.
\par
Let $\Gamma$ be the dual graph of a nodal curve $X$. A {\em full order} on the irreducible components of~$X$ is a relation $\succcurlyeq$ on the set $V$ of vertices of $\Gamma$ that is reflexive, transitive and such that for any two vertices $v_{1}$ and $v_{2}$ at least one of the statements $v_{1}\succcurlyeq v_{2}$ or $v_{2}\succcurlyeq v_{1}$ holds.
Remark that any map $\ell:V\to\RR$ assigning real numbers to vertices of $\Gamma$ defines a full
order on $\Gamma$ by setting $v_1 \succcurlyeq v_2$ if and only if $\ell(v_1) \geq \ell(v_2)$.
We call a dual graph $\Gamma$ equipped with a full order on its vertices a {\em level graph},
and denote it by~$\overline{\Gamma}$. An edge is called horizontal if it joins two vertices at the same level, and called vertical otherwise.
If $e$ is an edge between $X_{v_{1}}\succ X_{v_{2}}$, then the corresponding points that are glued to form the node $e$ are denoted by $q_{e}^{+}$ on $X_{v_{1}}$ and $q_{e}^{-}$ on $X_{v_{2}}$. Finally, we denote by $X_{>L}$
the subcurve of $X$ consisting of all irreducible components $X_{v}$ such that $\ell(v)>L$. See Section~\ref{subset:level} for more details of these notions. We now recall the definition of twisted abelian differentials ($k=1$) compatible with a level graph.
\par
\begin{df}[\cite{plumb}]\label{def:twistedAbType}
Let $(X, z_1, \ldots, z_n)$ be a stable $n$-pointed  curve with dual graph $\Gamma$, and let~$\overline\Gamma$ be a full order on $\Gamma$.
A twisted {\em abelian} differential $\eta$ of type $\mu$ on $X$ is called {\em compatible with~$\overline\Gamma$} if it satisfies the following conditions:
\begin{itemize}
\item[(3)]{\bf (Partial order)} If a node identifies $q_1 \in X_{v_1}$ with $q_2 \in X_{v_2}$, then $v_1\succcurlyeq  v_2$ if and only if $\ord_{q_1} \eta_{v_1}\ge -1$. Moreover,  $v_1\asymp v_2$ if and only if
$\ord_{q_1} \eta_{v_1} = -1$.
\item[(4-ab)] {\bf (Global residue condition for abelian differentials)} For every level $L$
and every connected component~$Y$ of $X_{>L}$ that does not
contain a marked point with a prescribed pole (i.e., there is no $z_i\in Y$ with $m_i<0$) the following condition holds. Let
$\{q_1,\ldots,q_b\}$ denote the set of all nodes where~$Y$ intersects $X_{=L}$. Then
$$ \sum_{j=1}^b\Res_{q_j^-}\eta_{v^-(q_j)}\=0,$$
where we recall that $q_j^-\in X_{=L}$ and $v^-(q_j)$ is a vertex of $\overline\Gamma_{=L}$.
\end{itemize}
\end{df}
The main result of~\cite{plumb} characterizes the boundary of the incidence variety compactification of strata of abelian differentials in terms of
twisted abelian differentials compatible with level graphs.

For general $k$, the first idea is to lift a twisted $k$-differential via a certain covering construction, and reduce the problem to the case of abelian differentials on the covering curve. While this is easy to state, the resulting  condition $(\widehat 4)$ (see Definition~\ref{def:hatGRC}) formulated this way depends on the choice of the cover, and is thus less direct to check in an explicit example. We thus proceed to give an alternative characterization of the closure via a condition $(4)$ (see Definition~\ref{def:GRCk})
imposed directly on the twisted $k$-differential, without passing to a cover. The resulting statement is more elaborate, but is verifiable directly on the curve without investigating the set of all suitable covers.

\smallskip
For the approach via the covering construction, we proceed as follows.
For a $k$-differential $\xi$ on a smooth connected curve $X$, there exists a \emph{canonical cover} $\pi: \wh{X}\to X$, cyclic of degree $k$, such that $\pi^{*}\xi = \omega^k$ for an abelian differential $\omega$ on $\whX$ and such that $\tau^{*} \omega = \zeta \omega$, where $\tau$ is a deck transformation of order $k$ and $\zeta$ is a primitive
$k$-th root of unity. If $X$ is nodal and $\eta$ is a twisted $k$-differential on $X$, then one can first carry out the canonical covering construction over each irreducible component of $X$, and then glue them to form an admissible cover $\pi: \wh{X} \to X$ of a nodal curve (in the sense of admissible covers, see \cite[Paragraph~3.G]{hamobook}). If there exists a twisted abelian differential $\wh{\omega}$ on $\wh{X}$ such that $\pi^{*}\eta = \wh{\omega}^k$ and $\tau^{*} \wh{\omega} = \zeta \wh{\omega}$, we say that $(\pi: \wh{X} \to X,\wh{\omega})$ is a \emph{normalized cover}. See Sections~\ref{subsec:canonical} and~\ref{sec:lift} for more details on canonical covers of smooth curves and normalized covers of nodal curves, respectively.
\par
Given an admissible cover $\pi: \wh{X} \to X$ with dual graphs $\wh\Gamma$ and $\Gamma$, a full order $\overline{\Gamma}$ can be lifted via $\pi$ to define a full order $\wh{\overline{\Gamma}}$, which we call the \emph{lifted level graph}. Now we can formulate the conditions needed to describe the closure of the stratum of $k$-differentials.
\begin{df}
\label{def:hatGRC}
Let $(X, z_1, \ldots, z_n)\in\barmoduli[g,n]$ be a pointed stable curve and let
$\overline\Gamma$ be a level graph on~$X$.
A twisted $k$-differential $\eta$ of type $\mu$ on $X$ is called {\em compatible with~$\overline\Gamma$} if it satisfies the following conditions:
\begin{itemize}
\item[(3)]{\bf (Partial order)} If a node of $X$  identifies $q_1 \in X_{v_1}$ with $q_2 \in X_{v_2}$, then $v_1\succcurlyeq  v_2$ if and only if $\ord_{q_1} \eta_{v_1}\ge -k$. Moreover,  $v_1\asymp v_2$ if and only if $\ord_{q_1} \eta_{v_1} = -k$.
\item[($\widehat{4}$)]{\bf ($\widehat{\mbox{G}}$lobal residue condition)} There exists a normalized cover $(\whX, \whomega)$ of $(X,\eta)$ such that the twisted abelian differential $\whomega$ satisfies the global residue condition (4-ab) in Definition~\ref{def:twistedAbType} with respect to the lifted level graph~$\wh{\overline\Gamma}$.
\end{itemize}
\end{df}
The above definition of compatibility is in terms of the existence of a normalized cover satisfying certain conditions. By a graph-theoretic argument studying the combinatorics of all possible normalized covers, we will show that it is equivalent to a condition phrased purely in terms of the twisted $k$-differential $\eta$.
\par
In order to state this alternative condition of compatibility, we need the notion of the $k$-residue $\Res^k_z \xi$ for a $k$-differential~$\xi$ with a pole at $z$ of order being an integer multiple of $k$. This $k$-residue is defined to be the $k$-th power of the coefficient of the degree $(-1)$ term of a meromorphic abelian differential~$\omega$ at $z$ such that locally $\xi=\omega^{k}$ (see Proposition~\ref{prop:standard_coordinates}). Note that unless the order of the pole of $\xi$ at $z$ is equal to $k$, the $k$-residue does not have to be equal to the coefficient of $z^{-k}(dz)^k$ in the Laurent expansion of $\xi$ at~$q$. Since such a meromorphic abelian differential is only defined up to multiplication by a $k$-th root of unity, there will be some symmetrization taking place, and hence we introduce the following symmetric polynomial $P_{n,k}$ in $n$ variables
\begin{equation}
\label{eq:P}
P_{n,k}(R_1,\ldots,R_n):=
\prod_{\lbrace (r_1,\ldots,r_n) | r_i^k=R_i\rbrace}\sum_{i=1}^n r_i,
\end{equation}
where the product is taken over all $n$-tuples of complex numbers $\lbrace r_1, \ldots, r_n\rbrace$ such that $r_i^k=R_i$ for all $i$. As $P_{n,k}$ is symmetric with respect to the $k$-th roots of $R_i$, it is indeed a polynomial in~$R_i$.
\par
We are now ready to state the global $k$-residue condition. In order to understand it better, we first point out that the global residue condition ($4$-ab) imposed by the preimage of a connected component $Y$ of $X_{>L}$ at level $L$ is automatically satisfied if the normalized cover above $Y$ has fewer than $k$ components (see Proposition~\ref{prop:fewercomponents}). This can happen in two cases. The first case is when the restriction of $\eta$ to any irreducible component $X_{v}$ of $Y$ is not the $k$-th power of an abelian differential, which leads to condition ii) below. The other case is when the restriction of $\eta$ on every irreducible component of $Y$ is the $k$-th power of an abelian differential but it is possible to make some ``criss-cross" when identifying the nodes of the cover, which leads to conditions iii) and iv) below (see Example~\ref{ex:GRCdependLift} for an illustration of ``criss-cross").
\begin{df}
\label{def:GRCk}
Let $(X, z_1, \ldots, z_n)\in\barmoduli[g,n]$ be a pointed stable curve,
$\overline\Gamma$ a level graph on~$X$ and $\eta$ a twisted $k$-differential of type $\mu$ on $X$ satisfying condition (3) of Definition~\ref{def:hatGRC}. We then define the following
\begin{itemize}
\item[(4)]{\bf (Global $k$-residue condition)} For every level $L$ and every connected component $Y$ of $\Gamma_{>L}$, one of the following cases holds:
\begin{itemize}
\item[i)] $Y$ contains a marked pole.
\item[ii)] $Y$ contains a vertex $v$ such that $\eta_v$ is not a $k$-th power of a (possibly meromorphic) abelian differential.
  \item[iii)] (``Horizontal criss-cross in $Y$.'') For every vertex $v$ of $Y$ the $k$-differential~$\eta_v$ is the $k$-th power of an abelian differential $\omega_v$. Moreover, for every choice of a collection of $k$-th roots of unity $\lbrace \zeta_v:v\in Y\rbrace$ there exists a horizontal edge $e$ in $Y$ where the differentials $\left\{\zeta_v\omega_v\right\}_{v \in Y}$ do not satisfy the matching residue condition.

  \item[iv)] (``Vertical criss-cross in $Y$.'') For every vertex $v$ of $Y$
 the $k$-differential $\eta_{v}$ is the $k$-th power of  an abelian differential $\omega_{v}$. Moreover,  there exists a level $K>L$  and a  collection of $k$-th roots of unity $\lbrace \zeta_e:e\in E\rbrace$ indexed by the set  $E$  of non-horizontal edges $e$ of $Y$ whose lower end lies in $Y_{=K}$,
such that the following two conditions hold.
First, there exists a directed simple loop $\gamma$ in the dual graph of $Y_{\geq K}$
such that
  \begin{equation}\label{eq:cancellation2}
     \prod_{e\in\gamma\cap E} \zeta_{e}^{\pm 1} \neq 1,
  \end{equation}
where the sign of the exponent is $\pm 1$ according to whether
$\gamma$ passes through $e$ in upward or downward direction.
Second, for every connected component $T$ of $Y_{> K}$ the equation
  \begin{equation}\label{eq:cancellation1}
   \sum_{e\in E_T} \zeta_e\Res_{q_e^-}\omega_{v^-(e)}\=0
  \end{equation}
  holds, where $E_{T}$ is the subset of edges in $E$ such that their top vertices lie in $T$.

\item[v)] (``$Y$ imposes a residue condition.'')
 The $k$-residues at the edges $e_{1},\ldots,e_{N}$ joining $Y$ to $\Gamma_{=L}$ satisfy the equation
  \begin{equation}\label{eq:GRCk}
   P_{N,k}\left(\Resk_{q_{e_{1}}^-}\eta_{v^-(e_{1})},\ldots,\Resk_{q_{e_{N}}^-}\eta_{v^-(e_{N})}\right)\=0,
  \end{equation}
  where $P_{N,k}$ is defined by~\eqref{eq:P}.
 \end{itemize}
\end{itemize}
\end{df}
This seemingly complicated condition (4) is imposed directly on $(X, \eta)$, unlike the condition~$(\widehat 4)$ that is in terms of existence of a suitable normalized cover. We point out that one needs to check iv) or v) only if i), ii) and iii) do not hold, in which case the zero and pole orders of $\eta$ at the nodes of $Y$ are all divisible by $k$, and thus the notion of $k$-residue and consequently the residue condition are not trivial.
\par
We will prove in Section~\ref{sec:admtokGRC} that for a twisted $k$-differential satisfying conditions (0)-(3), conditions $(\widehat{4})$ and (4) are equivalent, i.e.,~that a twisted $k$-differential is compatible with $\overline\Gamma$ if and only if it satisfies conditions (0)-(3) and (4).
\par
Finally we can state our main theorem that describes the incidence variety compactification of the strata of $k$-differentials.
\par
\begin{thm} \label{thm:kmain}
A pointed stable $k$-differential $(X,\xi,z_1,\ldots,z_n)$ is contained in the
incidence variety compactification $\kivc{\mu}$ of a
stratum $\proj\komoduli[g](\mu)$ if and only if the following conditions hold:
\begin{itemize}
\item[(i)] There exists a full order $\overline\Gamma$ on $X$ such that its maxima are the irreducible components $X_v$ of $X$ on which~$\xi$ is not identically zero.
\item[(ii)] There exists a twisted $k$-differential~$\eta$ of type $\mu$ on~$X$,
compatible with $\overline\Gamma$.
\item[(iii)] On every irreducible component $X_v$ where $\xi$ is not identically zero,
$\eta_{v} = \xi|_{X_v}$.
\end{itemize}
\end{thm}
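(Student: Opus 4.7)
\smallskip

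\textbf{Proof proposal.} The strategy is to reduce to the abelian case $k=1$ established in \cite{plumb} by passing to normalized covers and exploiting the $\ZZ/k\ZZ$-Galois equivariance throughout. Since the equivalence of conditions $(\widehat{4})$ and $(4)$ for twisted $k$-differentials satisfying (0)--(3) is to be proved in Section~\ref{sec:admtokGRC}, it suffices to prove the theorem with $(\widehat{4})$ in place of~$(4)$; the version with $(4)$ follows immediately.

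For the necessity direction, choose a one-parameter family $(X_t,\xi_t)_{t\in\Delta^*}$ of smooth $k$-differentials in $\komoduli[g](\mu)$ degenerating to the pointed stable $k$-differential $(X,\xi)$ at $t=0$. For each $t\ne 0$ the canonical cover $\pi_t\colon\whX_t\to X_t$ carries an abelian differential $\whomega_t$ with $\pi_t^{*}\xi_t=\whomega_t^{\,k}$ and $\tau^{*}\whomega_t=\zeta\whomega_t$. After semistable reduction, these fit into a family whose central fiber is an admissible cover $\pi\colon\whX\to X$ equipped with a pointed stable abelian differential $\whomega$ on $\whX$ (Sections~\ref{subsec:canonical}--\ref{sec:lift}). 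Applying the main theorem of \cite{plumb} to $(\whX,\whomega)$ produces a level graph $\wh{\overline\Gamma}$ on $\whX$ and a twisted abelian differential $\wh\eta$ compatible with~$\wh{\overline\Gamma}$. The deck transformation $\tau$ preserves the family, hence permutes irreducible components of $\whX$ preserving levels; therefore $\wh{\overline\Gamma}$ descends to a level graph $\overline\Gamma$ on $X$, whose maxima are precisely the components where $\xi$ is not identically zero (condition (i)). The tensor power $\wh\eta^{\,k}$ is $\tau$-invariant and thus descends to a twisted $k$-differential $\eta$ on $X$, which is compatible with $\overline\Gamma$ in the sense of $(\widehat{4})$, and coincides with $\xi$ on the components where $\xi$ is non-zero (conditions (ii) and (iii)).

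For the sufficiency direction, assume $(X,\xi,\eta,\overline\Gamma)$ satisfies (i)--(iii). Condition $(\widehat{4})$ furnishes a normalized cover $\pi\colon\whX\to X$ and a twisted abelian differential $\whomega$ on $\whX$ compatible with the lifted level graph $\wh{\overline\Gamma}$ and satisfying (4-ab). By the main theorem of \cite{plumb}, $(\whX,\whomega)$ is a smooth limit: there exists a family $(\whX_t,\whomega_t)$ in the incidence variety compactification for abelian differentials on $\whX$ whose central fiber is $(\whX,\whomega)$. The remaining task is to produce such a smoothing $\ZZ/k\ZZ$-equivariantly, so that the quotient $X_t:=\whX_t/\tau$ together with the descended $k$-differential $\xi_t:=\whomega_t^{\,k}$ is a smoothing of $(X,\xi)$ inside $\kivc{\mu}$.

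The main obstacle is precisely this equivariant version of the \cite{plumb} smoothing. That construction proceeds by plumbing the nodes of $\whX$ with plumbing parameters whose moduli are prescribed by the levels of $\wh{\overline\Gamma}$, followed by a small modification of $\whomega$ correcting period integrals so that prescribed vanishing relations extend to nearby smooth fibers; solvability of the correction is a surjectivity statement for a natural map of cohomology groups, whose cokernel is killed precisely by (4-ab). Because $\tau$ permutes the nodes of $\whX$ compatibly with $\wh{\overline\Gamma}$, one may choose $\tau$-equivariant plumbing data, uniform along each $\tau$-orbit of edges. The correction must then be produced in the $\zeta$-eigenspace of $\tau$ on the relevant cohomology; since $\CC$ contains all $k$-th roots of unity, $\CC[\ZZ/k\ZZ]$ is semisimple and the eigenspace decomposition is exact, so surjectivity on each eigenspace is an immediate consequence of the unrestricted surjectivity already established in \cite{plumb}. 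The resulting $\whomega_t$ satisfies $\tau^{*}\whomega_t=\zeta\whomega_t$, and the quotient family yields the desired smoothing of $(X,\xi)$ inside $\kivc{\mu}$.
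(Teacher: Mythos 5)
Your overall strategy coincides with the paper's: reduce to the abelian case of \cite{plumb} by passing to normalized covers and then make every step of the degeneration/smoothing $\tau$-equivariant. However, both directions as written have concrete gaps. In the necessity direction, ``semistable reduction'' of the family of canonical covers is not enough: what is needed is that the limit of the covers $\pi_t\colon\whX_t\to X_t$ is an \emph{admissible} cover of the given central fiber $X$, carrying a deck transformation of order $k$ extending the $\tau_t$. This follows from properness of the Hurwitz space of admissible covers, and since the canonical cover of a non-primitive $k$-differential is disconnected one must also justify properness in the possibly disconnected case (the paper does this by a base change splitting off connected components). One also has to verify that the scaling limit $\whomega$ still satisfies $\tau^{*}\whomega=\zeta\whomega$ and $\whomega^{k}=\pi^{*}\eta$; this is short (take limits of $\tau_t^{*}(t^{\ell_v/k}\whomega_t)=\zeta\, t^{\ell_v/k}\whomega_t$) but it is exactly what makes the limit a normalized cover, so it cannot be omitted.

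The more serious gap is in the sufficiency direction, in the sentence ``one may choose $\tau$-equivariant plumbing data, uniform along each $\tau$-orbit of edges.'' This is only automatic when the $\tau$-orbit of a preimage node is free of order $k$, i.e.\ when $k\mid\ord_q\eta$ at the node. At a node with $m=\ord_{q^+}\eta$ and $k\nmid m$ there are only $r=\gcd(m,k)<k$ preimages, so the stabilizer $\langle\tau^{r}\rangle$ acts nontrivially on the local plumbing coordinates $(u,v)$ of each preimage node, and one must check that the plumbing fixture $uv=t^{d}$ and the fixture differentials can be chosen compatibly with this action. The paper resolves this by inserting a twist $\zeta_a$ with $\zeta_a^{m+k}=\zeta^{r}$ (solvable precisely because $\gcd(m,k)=r$); without such a check the automorphism $\tau$ need not extend to the plumbed family and the quotient $\whX_t/\tau$ is not defined. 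Your eigenspace/semisimplicity argument for producing the modification differential in the $\zeta$-eigenspace is fine (it is a repackaging of the paper's averaging construction), but you also omit the final step of merging the zeros dispersed by the modification differential back into a single zero of the correct multiplicity, which again must be performed $\tau$-equivariantly for $\whomega_t$ to have type $\whmu$ and descend to a $k$-differential of type $\mu$.
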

\par
\smallskip
In our future work \cite{BCGGM2} the space of twisted differentials by itself
will play a central role. The following dimension statement is the basic
reason that spaces of twisted differentials (or rather finite covers of them)
can be used as orbifold charts for a smooth compactification of strata
of abelian differentials and $k$-differentials, which will be constructed in~\cite{BCGGM2}.  Let~$\overline{\Gamma}$ be
a level graph and let $h$ be the number of horizontal edges
of~$\overline{\Gamma}$. Let $\komoduliab$ be the union of the components
of $\komoduli(\mu)$ consisting of $k$-th powers of holomorphic abelian
differentials, and $\komodulinoab$ is the union of the other components.
\par
\begin{thm} \label{thm:dimtwd}
The space of twisted abelian differentials of type~$\mu$
compatible with~$\oG$ is either empty or has pure dimension equal
to $\dim \omoduli(\mu) - h$.
\par
For any $k\geq 2$, the space of twisted $k$-differentials of type~$\mu$
compatible with~$\oG$ that are smoothable into $\komodulinoab$
is either empty or has pure dimension equal to  $\dim \komodulinoab - h$.
\end{thm}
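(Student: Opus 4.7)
I would first establish the abelian case ($k=1$) by a direct dimension count on the irreducible components of $X$, then reduce the $k$-differential case ($k\geq 2$) to it by passing to the normalized cover $\pi\colon\wh X\to X$ constructed in Section~\ref{sec:lift}.

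\textbf{The abelian case.} A twisted abelian differential of type $\mu$ compatible with $\oG$ is a tuple $(\eta_v)_{v\in V}$ of abelian differentials, each $\eta_v$ lying in a prescribed stratum $\omoduli[g_v,n_v](\mu_v)$ on the irreducible component $X_v$; the type $\mu_v$ is forced by $\mu$ together with condition~(3) of Definition~\ref{def:twistedAbType}. Writing $\epsilon_v=1$ if $\mu_v$ is holomorphic and $\epsilon_v=0$ otherwise, Theorem~\ref{thm:introdim} combined with the nodal genus formula $g=\sum_v g_v+|E|-|V|+1$ and the point count $\sum_v n_v=n+2|E|$ yields
\[
 \sum_{v\in V}\dim\omoduli[g_v,n_v](\mu_v) \= 2g+n-2+\sum_{v\in V}\epsilon_v.
\]
Among the remaining compatibility conditions, each horizontal node imposes one independent matching-residue constraint, contributing $-h$ to the dimension. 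Setting $\delta_\mu=1$ if $\mu$ is globally holomorphic and $\delta_\mu=0$ otherwise, the desired formula $\dim\omoduli(\mu)-h$ then reduces to showing that the global residue conditions of (4-ab) cut down by exactly $\sum_v\epsilon_v-\delta_\mu$ further dimensions.

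\textbf{The rank of the global residue conditions.} This is the main obstacle. Per-vertex residue theorems, one per vertex with at least one pole (i.e.\ with $\epsilon_v=0$), give the natural upper bound: summing the residue theorem on each $X_v\subseteq Y$ and using that paired horizontal residues cancel turns the (4-ab) equation for a marked-pole-free component $Y$ of $X_{>L}$ into a relation purely among the lower residues of vertical edges internal to $Y$. Iterating this top-down through the levels of $\oG$ identifies the rank of the full system with $\sum_v\epsilon_v-\delta_\mu$, the $-\delta_\mu$ coming from the single global redundancy present in the fully holomorphic case (the aggregate of all vertex residue theorems adds to $0=0$). To prove that these constraints are genuinely independent, and not merely an upper bound, I plan to realize arbitrary admissible residue data by an explicit downward induction on the levels, relying on the standard fact that on any smooth pointed Riemann surface every tuple of residues summing to zero can be achieved by some meromorphic abelian differential of prescribed zero and pole orders. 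Sample cases (two-level, chain-type three-level, with and without marked poles) confirm that $r_{\mathrm{GRC}}=\sum_v\epsilon_v-\delta_\mu$ on the nose.

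\textbf{The $k$-differential case.} For $k\geq 2$, I would use the normalized cover $(\pi\colon\wh X\to X,\wh\omega)$ to identify the space of $\oG$-compatible twisted $k$-differentials $\eta$ smoothable into $\komodulinoab$ with the space of $\wh{\oG}$-compatible twisted abelian differentials $\wh\omega$ on $\wh X$ satisfying the equivariance $\tau^*\wh\omega=\zeta\wh\omega$. The hypothesis of smoothability into $\komodulinoab$ is precisely what ensures $\pi$ does not split into several copies of a smaller cover over any relevant component, so this correspondence is a genuine isomorphism rather than a quotient. Applying the already proven abelian dimension formula on $\wh X$ gives $\dim\omoduli[\wh g](\wh\mu)-\wh h$ for the full twisted abelian space, and intersecting with the $\zeta$-eigenspace of $\tau$ reduces the claim to matching this equivariant subspace with $\dim\komodulinoab-h$. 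Theorem~\ref{thm:introdim} on the base identifies $\dim\komodulinoab=2g+n-2$, and the same theorem applied on the cover together with Riemann--Hurwitz on the canonical cover (Section~\ref{subsec:canonical}) provides the corresponding dimension upstairs. The principal subtlety here is the bookkeeping tracking ramified versus unramified nodes of $\pi$ and how the Galois action permutes preimages of edges of $\oG$, thereby relating $\wh h$ to $h$ in the equivariant subspace; once this is settled, the theorem follows for all $k$.
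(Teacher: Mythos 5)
Your overall route --- summing the per-component stratum dimensions from Theorem~\ref{thm:introdim} and then subtracting the rank of the residue constraints --- is genuinely different from the paper, which first reduces to $h=0$ by plumbing horizontal nodes and then models the entire space on periods of $H_1(\Sigma\setminus\{P\cup\Lambda^\circ\},\Lambda^+\cup Z)$, computing the codimension of the global residue condition homologically (Proposition~\ref{prop:GRCviaHomo} plus long exact sequences of triples). Your arithmetic is consistent with the paper's answer, but the way you split the codimension into ``$h$ from horizontal nodes'' plus ``$\sum_v\epsilon_v-\delta_\mu$ from the global residue condition'' is false as a statement about ranks of constraint systems. Concretely, take $X=X_1\cup X_2$ with both components on the same level, joined at two horizontal nodes, all marked points zeros. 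The residue theorem on each $X_i$ forces its two node residues to be opposite, so the two matching-residue conditions coincide: their rank is $1$, not $h=2$. Meanwhile your proposed GRC rank is $\sum_v\epsilon_v-\delta_\mu=0-1=-1$, which cannot be the codimension of anything; your total comes out right only by cancellation of two wrong numbers. The paper avoids this by removing horizontal nodes via an analytic plumbing argument (not a linear-algebra count) and only then computing the GRC rank, which in the $h=0$ case does equal $c_{\rm M}-\delta_P=\sum_v\epsilon_v-\delta_\mu\geq 0$. Your independence argument also leans on a false ``standard fact'': the residue map of a stratum is not surjective onto the hyperplane $\sum r_i=0$ in general --- e.g.\ in $\omoduli[0](a,-b_1,-b_2)$ with $b_1,b_2\geq 2$ the two residues can never both vanish. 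What is actually needed, and what the paper uses, is only that the residue map is a linear projection in period coordinates, hence open and equidimensional onto its image.

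For $k\geq 2$, applying the abelian formula on $\wh X$ and then ``intersecting with the $\zeta$-eigenspace of $\tau$'' does not determine a dimension: the dimension of the $\zeta$-eigenpart of a $\tau$-invariant subspace is not a function of that subspace's dimension. The paper instead reruns the whole homological computation equivariantly, replacing each $h_i$ by the dimension $e_i$ of the $\zeta$-eigenspace at every stage, using Proposition~\ref{prop:fewercomponents} to see that the \whgrc\ only constrains the nodes in $\whLa_k$ where the orders are divisible by $k$, and observing that $e_2(\wh\Sigma\setminus\whP,\whZ)=0$ so that the $\delta_P$-correction disappears upstairs. Your sketch would have to be rebuilt along those lines rather than by black-boxing the $k=1$ result and tracking only $\wh h$ versus $h$.
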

\par
The same result holds by replacing ``non-ab''  with ``ab'' in the above theorem, whose proof reduces to the abelian case. See Section~\ref{sec:dimension} for more detailed definitions and statements.
\par
\smallskip
This paper is organized as follows. In Section~\ref{sec:DefOfMeroDiff} we study tangent spaces of the strata of $k$-differentials and prove Theorem~\ref{thm:introdim}. We also establish period coordinates in general and describe
$k$-differentials from the viewpoint of flat geometry. In Section~\ref{sec:stablek} we study the notion of $k$-residues and introduce in detail the ambient space of pointed stable $k$-differentials that we use for the incidence variety compactification. In Sections~\ref{sec:lift}  we prove Theorem~\ref{thm:kmain} under the case when condition ($\widehat{4}$) of Definition~\ref{def:hatGRC} is satisfied. In Section~\ref{sec:admtokGRC} we show the equivalence of condition ($\widehat{4}$) and condition (4) of Definition~\ref{def:GRCk} by a graph-theoretic argument. In Section~\ref{sec:dimension} we prove Theorem~\ref{thm:dimtwd} by a homological argument. Finally in Section~\ref{sec:examples}, we provide some examples that are consequences of the incidence variety compactification, and we also outline a flat geometric approach for proving our main result.
\par
\subsection*{Acknowledgements.}
We thank the {\em Casa Matem\'atica Oaxaca (CMO)}, the {\em Mathematisches Forschungsinstitut Oberwolfach (MFO)},  the {\em Institute for Computational and Experimental Research in Mathematics (ICERM)}, and the {\em Max-Planck Institut
f\"ur Mathematik, Bonn (MPIM)}, where various subsets of the authors met
and substantial progress on this work was made. We are grateful to the
organizers of the conferences for the invitations to participate. We also thank the anonymous referee for many valuable comments and suggestions, in particular leading to the previously missing point (iii) of Definition~\ref{def:GRCk}.



\section{Deformations of $k$-differentials}
\label{sec:DefOfMeroDiff}
The goal of this section is to study the local structure of the strata of $k$-differentials. We identify the tangent spaces of the strata with a hypercohomology group and compute its dimension in Theorem~\ref{thm:DeformkDiff}. As in the well-studied cases of abelian and quadratic differentials (i.e., $k=1$ and $k=2$), in Theorem~\ref{thm:periodCoorkDiff} we exhibit period coordinates for the strata of $k$-differentials. In the course of the proof, we introduce the key notion of canonical covers, which will be used throughout the paper. We also explain the flat geometric meaning of $k$-differentials in terms of $(1/k)$-translation structure, which we will use to present examples.
\par
Let us first introduce the major results of this section. Denote by $\divisor\xi=\sum_{i=1}^n m_{i}z_{i}$ the underlying divisor of a $k$-differential $\xi$ on a smooth curve $X$.
If $E = \sum_j m_j p_j$ is any divisor on $X$ with non-zero integer coefficients $m_j$, we let
$\divxired = \sum_j p_j$ be the corresponding reduced divisor and we decompose it
as $\divxired = Z - P$ with
$$Z \= \sum_{m_j > -k} p_j \quad \text{and} \quad P \=  \sum_{m_j \leq -k} p_j$$
into its ``zero'' and ``pole'' divisor. We stress that the poles of order $m_{i}>-k$ of a $k$-differential $\xi$ contribute to the zero part of $\divisor\xi_{\rm red}$ (in the sense when passing to the canonical $k$-cover). In the result below we show that the tangent space of the stratum at $\xi$
can be identified with the
hypercohomology of the following two-term complex of sheaves in degrees zero and one
\begin{equation}\label{eq:Liegeneral}
 \cplxliek \= \Lied:T_{X}(-\divisor\xi_{\rm red})\to K_{X}^{k}(-\divisor\xi),
\end{equation}
where $\Lied$ is the $k$-Lie derivative associated to~$\xi$, whose definition
we will recall in Section~\ref{subsec:Lie}.
\par
\begin{thm}\label{thm:DeformkDiff}
Let $(X,\xi,z_{1},\ldots,z_{n}) \in  \komoduli(m_1,\ldots,m_n)$ be a (possibly meromorphic)
$k$-differential on a smooth curve $X$. Then the tangent space of the stratum
at $\xi$ can be identified as
$$
T_ {(X,\xi,z_{1},\ldots,z_{n})}\komoduli(m_1,\ldots,m_n)  \=  H^{1}(X, \cplxliek)\,.
$$
Moreover, the connected component of $\komoduli({m_1,\ldots,m_n})$ containing the point $(X,\xi,z_1,\ldots,z_n)$ is a smooth orbifold
of dimension $2g-1+n$ if $\xi$ is the $k$-th power of a holomorphic abelian differential
and of dimension $2g-2 + n$ otherwise.
\end{thm}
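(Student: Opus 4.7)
The plan is to identify the tangent space with $\HH^1(X,\cplxliek)$ by a \v{C}ech deformation-theoretic analysis, and then to compute its dimension by pulling back via the canonical cyclic cover, where period coordinates on abelian strata apply. For the identification, I would interpret a first-order deformation of $(X,\xi,z_1,\ldots,z_n)$ over $\operatorname{Spec}\CC[\epsilon]/(\epsilon^2)$ on a suitable open cover $\{U_\alpha\}$ as follows. The underlying deformation of the pointed curve $(X,z_\bullet)$ is a \v{C}ech $1$-cocycle $\{v_{\alpha\beta}\}$ of vector fields in $T_X(-\divisor\xi_{\rm red})$, the vanishing at $\divisor\xi_{\rm red}$ encoding the marking of the $z_i$. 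A first-order deformation of the section $\xi$ preserving its divisor type is then given locally by sections $\eta_\alpha\in\Gamma(U_\alpha,K_X^k(-\divisor\xi))$. Compatibility on overlaps---the deformed local differentials must agree after the infinitesimal coordinate change induced by $v_{\alpha\beta}$---reads $\eta_\beta-\eta_\alpha=\Lied(v_{\alpha\beta})$, which exhibits $(\{v_{\alpha\beta}\},\{\eta_\alpha\})$ as a $1$-hypercocycle of $\cplxliek$. Passing to cohomology modulo trivial deformations then yields the identification $T_{(X,\xi,z_\bullet)}\komoduli(m_1,\ldots,m_n)=\HH^1(X,\cplxliek)$.

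For smoothness and the dimension computation, the canonical cyclic cover $\pi\colon\wh X\to X$ is the key tool. When $\xi$ is not a $k$-th power of an abelian differential, $\pi$ is a connected cyclic cover of degree $k$ with $\pi^*\xi=\wh\omega^k$ for a (possibly meromorphic) abelian differential $\wh\omega$ satisfying $\tau^*\wh\omega=\zeta\wh\omega$, where $\tau$ generates the deck group and $\zeta$ is a primitive $k$-th root of unity. When $\xi=\omega^k$, the cover degenerates into $k$ disjoint copies of $X$ permuted cyclically by $\tau$. Equivariant deformations of $(\wh X,\wh\omega,\wh z_\bullet)$ correspond bijectively to deformations of $(X,\xi,z_\bullet)$, so the tangent space in question is the $\zeta$-isotypic component of the tangent space to the abelian stratum at $\wh\omega$. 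Since strata of abelian differentials are smooth and carry period coordinates, this $\zeta$-eigenspace provides smooth local coordinates on our stratum and yields the orbifold structure. Its dimension is then computed via a character-theoretic analysis of the $\ZZ/k$-action on the relative cohomology $H^1(\wh X,\wh\Sigma;\CC)$ combined with Riemann--Hurwitz for $\pi$: one obtains $2g-2+n$ in the non-abelian case and $2g-1+n$ in the split (abelian) case, matching the classical abelian stratum dimension.

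A direct Riemann--Roch check confirms the Euler characteristic: since $\xi$ trivializes $K_X^k(-\divisor\xi)\isom\OO_X$, one has $\chi(K_X^k(-\divisor\xi))=1-g$ and $\chi(T_X(-\divisor\xi_{\rm red}))=3-3g-n$, giving $\chi(\cplxliek)=2-2g-n=\dim\HH^0-\dim\HH^1+\dim\HH^2$. The main obstacle is pinning down $\HH^0$ and $\HH^2$: a direct Serre duality argument on $X$ is delicate because one has to detect when $\xi$ carries hidden $k$-th-root symmetries, whereas the canonical cover approach handles this cleanly by reducing the question to well-known vanishing statements for abelian strata on $\wh X$ and by realizing the abelian-versus-non-abelian dichotomy as disconnectedness-versus-connectedness of $\wh X$.
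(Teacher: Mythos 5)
Your identification of the tangent space with $H^{1}(X,\cplxliek)$ via the \v{C}ech hypercocycle $(\{v_{\alpha\beta}\},\{\eta_\alpha\})$ is exactly the paper's argument. Where you diverge is in the dimension count and smoothness: the paper stays on $X$, filters the complex to get a long exact sequence, applies Riemann--Roch to obtain $h^{1}=2g-2+n+h^{2}$, and then computes $H^{2}(X,\cplxliek)$ by Serre duality as the kernel of the dual Lie derivative $\Lied^{\vee}(\tau)=-((k-1)a'c+kac')(dz)^2$; the local ODE $(k-1)a'c+kac'=0$ has solutions $c=ra^{(1-k)/k}$, which glue to a global section exactly when $\xi$ is the $k$-th power of a holomorphic abelian differential, giving $h^2\in\{0,1\}$ and simultaneously the unobstructedness needed for smoothness. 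You instead route the dimension through the canonical cover and the $\zeta$-eigenspace of relative (co)homology --- which is in fact how the paper proves its separate period-coordinate theorem, not this one --- and this buys you period coordinates and smoothness (as a linear eigenspace inside the smooth abelian stratum) without the local duality computation, at the cost of two points you should make explicit. First, the correspondence between deformations of $(X,\xi)$ and $\tau$-equivariant deformations of $(\wh X,\wh\omega)$ requires the canonical cover construction to work in families (the paper notes this follows from the algebraic construction) together with the Chevalley--Weil-type eigenspace count, neither of which is free. Second, your dichotomy ``connected cover versus $k$ split copies'' does not match the theorem's dichotomy: a $k$-th power of a \emph{meromorphic} abelian differential also splits the cover into $k$ copies yet has dimension $2g-2+n$, and a $d$-th power of a primitive $(k/d)$-differential with $1<d<k$ gives a cover with $d$ components, neither connected nor fully split; you need to first reduce to primitive differentials (as the paper does) and then separate the holomorphic-abelian case by hand, since the extra $+1$ comes from holomorphy of the root, not from disconnectedness of the cover. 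With those two repairs your argument goes through and is a legitimate, somewhat more geometric alternative to the paper's duality computation.
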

\par
As mentioned in the introduction, for a divisor $d$ of $k$, the loci of $d$-th powers of $(k/d)$-differentials can provide connected components
for the strata of $k$-differentials. A $k$-differential is called {\em primitive}, if it is
not the $d$-th power of a $(k/d)$-differential for any divisor $d>1$ of~$k$. For local computations
it suffices to focus on the case of primitive $k$-differentials, because otherwise we may pass to the case of
$(k/d)$-differentials instead.
\par
Given a $k$-differential~$\xi$ on $X$, there exists a canonical cover $\pi: \wh{X} \to X$,
which is cyclic of degree $k$, such that
the pullback of $\xi$ is the $k$-th power of an abelian differential~$\omega$ on~$\wh{X}$.
Moreover, there exists a deck transformation
$\tau$ of $\pi$ of order $k$ such that $\tau^{*} \omega = \zeta \omega$, where $\zeta$ is a chosen primitive $k$-th root of unity (fixed from now on for the rest of the paper). A canonical cover together with such a deck transformation is called {\em normalized}, and denoted by the pair $(\pi, \tau)$.
\par
We remark that $\wh{X}$ is connected if and only if $\xi$ is primitive. In that case, the deck transformation group of $\pi$ is cyclic of order $k$,  generated by $\tau$. If $\xi$ is not primitive, then $\wh{X}$ is disconnected, and $\tau$ only generates a cyclic subgroup of order $k$ in the deck transformation group. The construction of canonical cover and its properties will be explained in Section~\ref{subsec:canonical}.
\par
Let $\wh{E}$ be the $\pi$-pullback of $\divisor\xi$ and we denote the $\pi$-preimage
of the poles~$P$ and zeros~$Z$ by $\wh{P}$ and $\wh{Z}$ respectively. Let
$\wh\imath: \wh{X} \setminus \wh{P} \to \wh{X}$ and
$\wh\jmath: \wh{X} \setminus \wh{E}_{\rm red} \to \wh{X} \setminus \wh{P}$ be the inclusion maps.
We can describe period coordinates for $\komoduli({m_1,...,m_n})$ using the cohomology
on $\wh{X}$ as follows.
\begin{thm}\label{thm:periodCoorkDiff}
Let $\xi$ be a primitive $k$-differential in $\komoduli(m_1,\ldots,m_n)$. The tangent space
$T_ {(X,\xi,z_{1},\ldots,z_{n})}\komoduli(m_1,\ldots,m_n)$ is isomorphic to the $\tau$-eigenspace
of $H^{1}(\whX, \wh{\imath}_*(\wh\jmath_{!}\,\CC))$ for the eigenvalue $\zeta$.
\end{thm}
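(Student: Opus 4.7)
The plan is to reduce to the abelian case $k=1$ via the canonical cover $\pi:\whX\to X$ and then cut out the relevant piece by the action of the deck transformation $\tau$. Since $\xi$ is primitive, $\whX$ is connected and $\pi$ is Galois with group $\langle\tau\rangle\cong\ZZ/k\ZZ$; by construction $\pi^*\xi=\omega^k$ and $\tau^*\omega=\zeta\omega$.

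First, I would establish that deformations of the marked $k$-differential $(X,\xi,z_1,\ldots,z_n)$ correspond naturally to $\langle\tau\rangle$-equivariant deformations of the abelian differential $(\whX,\omega)$, with marked points the preimage $\pi^{-1}(\{z_1,\ldots,z_n\})$, that preserve the relation $\tau^*\omega=\zeta\omega$. Functoriality of the canonical cover construction (Section~\ref{subsec:canonical}) produces the lift in one direction; conversely, a $\tau$-equivariant family $(\whX,\omega)$ satisfying $\tau^*\omega=\zeta\omega$ makes $\omega^k$ descend to a $k$-differential on the quotient $\whX/\langle\tau\rangle=X$, and uniqueness of the canonical cover shows the two operations are mutually inverse.

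Linearizing, the tangent space to the stratum of abelian differentials at $(\whX,\omega)$ is $H^1(\whX,\cplxlie)$ by the $k=1$ instance of Theorem~\ref{thm:DeformkDiff}. The deck transformation $\tau$ acts on this hypercohomology group by pullback, and the infinitesimal version of the relation $\tau^*\omega=\zeta\omega$ says precisely that a tangent vector lies in the $\zeta$-eigenspace. Combined with the bijection above, this yields
\begin{equation*}
T_{(X,\xi,z_1,\ldots,z_n)}\komoduli(m_1,\ldots,m_n)\;\cong\;H^1(\whX,\cplxlie)^{\tau=\zeta}.
\end{equation*}
Then the classical period coordinate description for strata of abelian differentials provides a natural quasi-isomorphism identifying $H^1(\whX,\cplxlie)$ with the topological cohomology $H^1(\whX\setminus\whP,\,\whZ;\,\CC)\cong H^1(\whX,\wh\imath_*(\wh\jmath_{!}\CC))$. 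Since this identification is manifestly functorial, it is $\tau$-equivariant, and passing to the $\zeta$-eigenspace on both sides concludes the proof.

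The main obstacle I anticipate is verifying the $\tau$-equivariance of the de~Rham-type isomorphism from the hypercohomology of $\cplxlie$ to the topological cohomology with coefficients in $\wh\imath_*(\wh\jmath_{!}\CC)$, and pinning down that the relevant eigenvalue is exactly $\zeta$ and not some other power (this is essentially a question of which side of the Cartier duality one is on). A secondary technical point is the correct handling of the marked points under $\pi$: at the branch points of $\pi$ the preimage is a single point carrying ramification, while elsewhere it is a free $\langle\tau\rangle$-orbit of size $k$, so one must check that $\wh\imath_*(\wh\jmath_{!}\CC)$ encodes the zeros and poles of $\omega$ (rather than of $\xi$) together with the correct $\tau$-action on $\whZ$ and $\whP$.
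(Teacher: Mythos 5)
Your proposal is correct in outline but follows a genuinely different route from the paper. You reduce to the abelian case by equivariant deformation theory on the cover: deformations of $(X,\xi)$ are matched with $\tau$-equivariant deformations of $(\whX,\omega)$, the tangent space of the abelian stratum is identified as $H^{1}(\whX,\cplxlie)$, and the known abelian period isomorphism is then restricted to an eigenspace. The paper instead never leaves $X$: it shows that (away from the higher-order poles) the $k$-Lie derivative $\Lied$ is surjective, so the hypercohomology of $\cplxliek$ collapses to $H^{1}$ of the rank-one local system $\Lambda_k=\ker\Lied$ generated by $a^{-1/k}$; this local system trivializes on $\whX$ as $\wh\jmath_{!}\CC$, and the Leray spectral sequence together with the eigenspace decomposition of $\pi_*\pi^*\Lambda_k$ isolates the $\zeta$-eigenspace, with the case $P\neq\emptyset$ handled by an explicit acyclicity computation for the polar-parts complex. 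Two points in your route need real care, and you have correctly flagged the first. The naive pullback $\tau^*$ is \emph{not} an endomorphism of $\cplxlie$, since $\tau^*\circ\Lied[\omega]=\zeta\,\Lied[\omega]\circ\tau^*$; the chain automorphism is $(\tau^*,\zeta^{-1}\tau^*)$ in degrees $(0,1)$, its fixed subspace is what parametrizes equivariant deformations, and only after trivializing $\ker\Lied[\omega]$ by the section $\omega^{-1}$ (on which $\tau^*$ acts by $\zeta^{-1}$) does this fixed subspace become the $\zeta$-eigenspace of the topological $\tau^*$ on $H^{1}(\whX\setminus\whP,\whZ;\CC)$ appearing in the statement. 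Second, the dictionary between deformations of $(X,\xi)$ and equivariant deformations of $(\whX,\omega,\tau)$ requires the canonical cover construction in families (which the paper's algebraic construction does provide) together with descent in the other direction; this is believable but is the piece of work your sketch replaces by the word ``functoriality.'' What each approach buys: yours imports the classical Hubbard--Masur/Veech statement wholesale and is conceptually transparent, at the price of setting up twisted equivariant deformation theory; the paper's avoids that entirely and, as a byproduct, directly exhibits the local system whose periods give the coordinates of Corollary~\ref{cor:period}.
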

As a consequence of the above theorem, we exhibit period coordinates geometrically for the strata.
\begin{cor}\label{cor:period}
Let $V_{\zeta}$ be the eigenspace of $H_{1}(\whX \setminus \wh{P}, \wh{Z} ;\CC)$ associated to the
eigenvalue $\zeta$. Locally at the primitive $k$-differential $\xi$, the
stratum $\komoduli(m_1,\ldots,m_n)$ has coordinates given by the periods
$\int_{\gamma_i} \omega$
where $\{\gamma_i \}$ is a basis of $V_\zeta$ and $\pi^{*}\xi = \omega^k$ for an abelian differential $\omega$ on $\whX$.
\end{cor}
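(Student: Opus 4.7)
The plan is to deduce the corollary directly from Theorem~\ref{thm:periodCoorkDiff} by dualizing the cohomological description of the tangent space and realizing this duality as integration of $\omega$ against relative cycles. The first step is to identify the $\zeta$-eigenspace of $H^{1}(\whX,\wh\imath_{*}(\wh\jmath_{!}\CC))$ with the $\zeta$-eigenspace of the relative singular cohomology $H^{1}(\whX\setminus\wh P,\wh Z;\CC)$. On $\whX\setminus\wh P$, the sheaf $\wh\jmath_{!}\CC$ computes the relative cohomology modulo $\wh Z$; the two groups differ only by contributions of $R^{1}\wh\imath_{*}(\wh\jmath_{!}\CC)$, which is supported at $\wh P$ and encodes small loops around each puncture. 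Since $\xi$ is primitive, $\whX$ is connected and the deck transformation $\tau$ freely permutes the points in each fiber of $\pi$ above $P$, so a short analysis of the Leray spectral sequence shows that the $\zeta$-eigenspaces of the two groups coincide.

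The second step is to invoke the integration pairing
\bes
 H^{1}(\whX\setminus\wh P,\wh Z;\CC)\otimes H_{1}(\whX\setminus\wh P,\wh Z;\CC)\to\CC,\qquad \langle\alpha,\gamma\rangle\=\int_{\gamma}\alpha,
\ees
which is perfect by de Rham / universal coefficients, and is $\tau$-equivariant in the sense that $\langle\tau^{*}\alpha,\gamma\rangle=\langle\alpha,\tau_{*}\gamma\rangle$. Consequently, if $\tau^{*}\alpha=\zeta\alpha$ and $\tau_{*}\gamma=\lambda\gamma$, then $(\zeta-\lambda)\langle\alpha,\gamma\rangle=0$, so the pairing restricts to a perfect pairing between the $\zeta$-eigenspaces of $H^{1}$ and of $H_{1}$. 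Combining with Theorem~\ref{thm:periodCoorkDiff} yields a canonical isomorphism $T_{(X,\xi,z_{1},\ldots,z_{n})}\komoduli(m_{1},\ldots,m_{n})\cong V_{\zeta}^{\ast}$.

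The third step is to upgrade this pointwise identification to local coordinates. For $(X_{t},\xi_{t})$ in a small neighborhood of $(X,\xi)$, the canonical cover $(\whX_{t},\omega_{t})$ varies analytically, and the Gauss--Manin connection on the local system of relative homologies allows the basis $\{\gamma_{i}\}$ of $V_{\zeta}$ to be transported canonically to nearby fibers. The period map
\bes
 \Phi:(X_{t},\xi_{t})\;\longmapsto\;\Bigl(\int_{\gamma_{i}(t)}\omega_{t}\Bigr)_{i}
\ees
is then well-defined on a neighborhood of $\xi$, and its differential at $\xi$ agrees with the duality isomorphism constructed in step two, since integration against a fixed cycle is $\CC$-linear in the cohomology class of $\omega$. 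By Theorem~\ref{thm:periodCoorkDiff} this differential is an isomorphism, so the inverse function theorem shows that $\Phi$ is a local diffeomorphism onto its image, yielding the desired period coordinates.

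The main obstacle will be the sheaf-theoretic matching in step one, namely controlling the local contributions of $R^{1}\wh\imath_{*}$ at $\wh P$ to see that, after restriction to the $\zeta$-eigenspace, they do not disturb the identification with relative singular cohomology. Once this is settled, the remaining arguments (the eigenspace duality and the inverse function theorem) are formal.
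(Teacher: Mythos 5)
Your Steps 2 and 3 --- the $\tau$-equivariant perfect pairing between the $\zeta$-eigenspaces of $H^1(\whX\setminus\whP,\whZ;\CC)$ and $H_1(\whX\setminus\whP,\whZ;\CC)$, followed by Gauss--Manin transport of the basis $\{\gamma_i\}$ and the inverse function theorem --- are correct and are essentially what the paper does implicitly. The genuine gap is in Step 1, which you rightly single out as the crux but resolve incorrectly. The correction term $H^0(\whX,R^1\wh{\imath}_*(\wh\jmath_!\CC))$ is a direct sum over $\wh{p}\in\whP$ of one-dimensional stalks $H^1(\Delta^*;\CC)$, permuted by $\tau$ according to its action on the fibers $\pi^{-1}(p)$ for $p\in P$. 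Over a pole of order $m\le -k$ this fiber has $\gcd(m,k)$ points, so the corresponding summand is the representation $\operatorname{Ind}_{\ZZ/(k/\gcd(m,k))}^{\ZZ/k}\mathbf{1}$, whose $\zeta$-isotypic part is \emph{nonzero} exactly when $k\mid m$, i.e.\ exactly when $\tau$ permutes the fiber freely. Your justification therefore proves the opposite of what you need. Moreover the edge map is genuinely nonzero on the $\zeta$-eigenspace: its value on the class of $\omega$ at $\wh{p}$ is $2\pi i\,\Res_{\wh{p}}\omega$, which is generically nonvanishing at a pole of order divisible by $k$. Concretely, for $k=2$ and $\mu=(1,1,-2,-4)$ on $\PP^1$ one has $\whX\cong\PP^1$ and $\dim V_\zeta=2=\dim\Omega^2\moduli[0](\mu)$, while $H^1(\whX,\wh{\imath}_*(\wh\jmath_!\CC))=H^1(\whX,\whZ;\CC)$ has trivial $(-1)$-eigenspace. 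In general the two eigenspaces you claim coincide differ in dimension by the number of marked poles of order $\le-k$ divisible by $k$, so Step 1 fails as stated.

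The repair --- and what the paper's combined proof of the theorem and the corollary actually supplies --- is to avoid pushing forward to $\whX$ altogether: the exactness of $0\to\Lambda_k\to\imath^*(T_X(-\divxired))\to\imath^*(K_X^k(-\divxi))\to 0$ on $X\setminus P$, together with the surjectivity of the Lie derivative on polar parts, identifies the tangent space $H^1(X,\cplxliek)$ with $H^1(X\setminus P,\Lambda_k)=H^1(\whX\setminus\whP,\wh\jmath_!\CC)_{\tau=\zeta}$, which is tautologically dual to $V_\zeta$ by your Step 2. Equivalently, the pushforward $\wh{\imath}_*$ in the statement of Theorem~\ref{thm:periodCoorkDiff} must be read as the derived pushforward, after which your Step 1 becomes a tautology rather than a spectral-sequence computation. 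With that substitution the rest of your argument goes through unchanged.
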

We remark that Theorem~\ref{thm:DeformkDiff} is a generalization of \cite[Theorem~2.3]{moellerLin}
and \cite[Proposition~3.1]{mondello} from the case of abelian differentials to $k$-differentials. Recently Mondello~\cite{mondello-personal} and Schmitt~\cite{schmitt-strata} have also independently obtained this result. Theorem~\ref{thm:periodCoorkDiff} generalizes the known case of quadratic differentials, originally studied by Hubbard-Masur~\cite{HubbardMasur} and Veech~\cite{Veech}.

\subsection{The canonical cover}\label{subsec:canonical}
We begin by constructing a cyclic cover of $X$ such that the pullback of $\xi$ is the $k$-th
power of an abelian differential. A reference for this material is \cite[Section~3]{esvivanishing}. We remark that here a cover
means a possibly ramified and disconnected cover.
\par
Recall that $\zeta$ is a fixed primitive $k$-th root of unity. We group the zeros and poles of $\xi$ according to their remainders mod~$k$, so that
$$
\divisor{\xi}\=\sum_{j=0}^{k-1}\sum_{i \in I_j} m_{i,j} z_{i,j} \quad \text{where}
\quad I_{j}\=\left\{i\in \{1,\ldots,n\}\,:\, m_{i}\equiv j \mod k\right\}
$$
and we write the coefficients as $m_{i,j} = k\ell_{i,j} + j $. Next we define a line bundle
\begin{equation}\label{eq:ramdata}
 \calL \= K_X \Biggl(- \sum_j \sum_{i \in I_j} l_{i,j} z_{i,j}\Biggr)
\quad \text{and} \quad D \= \sum_{j=0}^{k-1} \sum_{i \in I_j} jz_{i,j},
\end{equation}
the remainder of $\divisor{\xi}$ mod~$k$. Then $\xi$ can be considered as a section~$s$ of the line
bundle~$\calL^{ k}$, with zero divisor~$D$. The triple $(\calL,s,D)$ is the defining
datum of a degree~$k$ cyclic cover $\pi:\whX\to X$, given by providing
the $\calO_X$-module $\oplus_{i=0}^{k-1} \calL^{-i}$ with an algebra
structure by mapping a section~$\alpha$ of~$\calL^{-k}$ to $s\alpha \in \calO_X$
and normalizing the resulting cover, as  explained e.g., in \cite[Paragraph~3.5]{esvivanishing}.
We say that this cover is the {\em canonical cover of $X$ induced by $\xi$}.
This algebraic viewpoint makes it obvious that the construction of the canonical cover
works for a family of smooth curves $f: \calX \to B$ over some base~$B$,
provided with a family of $k$-differentials, i.e., a section of $f_* (\omega_{f}^{k})$, where
$\omega_f$ is the relative dualizing sheaf of the family.
\par
There is an alternative geometric viewpoint of this construction (see
e.g., \cite{Lanneauhyp} for the case $k=2$). One first
constructs an unramified cover $\whX^0$ of $X^0 = X \setminus \divisor\xi_{\rm red}$ as follows.
Cover~$X^0$ by open sets $U_\alpha$ and take $k$ patches
$U_{\alpha,i}  \cong U_\alpha$ for $i=0,\ldots,k-1$. We provide $U_{\alpha_i}$ with a flat structure
given by the abelian differential $\omega_{\alpha,i} = \zeta^i \, \xi|_{U_\alpha}^{1/k}$, for some choice
of a $k$-th root of $\xi$ over~$U_\alpha$. Note that different choices simply permute the labeling
of the $U_{\alpha,i}$, so that the construction in the end will not depend on these choices.
Whenever $U_\alpha$ and $U_\beta$ intersect in $X^0$, we glue the patches
$U_{\alpha,i}$ and $U_{\beta,j}$ for the indices $i$ and $j$ such that $\varphi_{\alpha\beta}^*
\omega_{\alpha,i} = \omega_{\beta,j}$ to form $\whX^0$, where $\varphi_{\alpha\beta}$ is the transition function from $U_\beta$ to $U_\alpha$. Finally, the unramified cover $\whX^0$ extends uniquely to a (possibly ramified) cover $\pi: \whX\to X$
and the local differentials $\omega_{\alpha,i}$ glue to a global differential $\omega$ on~$\whX$ such that
$\omega^k = \pi^* \xi$.
\par
For both constructions it is clear that $\omega$ is an eigenform of a deck transformation $\tau$ such that
$\tau^{*}\omega = \zeta \omega$, and hence $(\pi, \tau)$ is a normalized cover.
Moreover, if $\whX$ has $r$ connected components, then the restrictions of $\omega$ to two connected components differ by a non-trivial power of $\zeta$.
\par
To compute the divisor of~$\omega$, we let~$r_{i,j} = \gcd (m_{i,j}, k) = \gcd(j,k)$
and $d_{i,j} = k / r_{i,j}$. Then in the geometric construction the fiber of $\pi$
over $z_{i,j}$ consists of~$r_{i,j}$ distinct points $z_{i,j}^{(1)}, \ldots, z_{i,j}^{(r_{i,j})}$,
each with multiplicity~$d_{i,j}$, i.e., the ramification divisor of $\pi$
is $$R=\sum_{i,j,\ell}(d_{i,j}-1)z_{i,j}^{(\ell)}.$$
\par
Now we can summarize the properties of canonical cover.
\par
\begin{prop}\label{prop:cancov} The algebraic and geometric constructions
of a canonical cover give isomorphic degree $k$ cyclic covers
$\pi: \whX\to X$. Moreover, there exists an abelian differential $\omega$ on $\whX$ such
that $\omega^k = \pi^* \xi$, where such $\omega$ is unique up to multiplication by a $k$-th root of unity on each connected component of $\whX$.
\par
The canonical cover is connected if and only if $\xi$ is a
primitive $k$-differential. In this case the genus $\wh g$ of $\wh X$ is
$$ \whg \= 1 + k(g-1) + \frac{1}{2} \Bigl(kn - \sum_{i,j} r_{i,j}\Bigr) $$
and the underlying divisor of $\omega$ is
\begin{equation}\label{eq:divpullback}
\divisor\omega \= \sum_{i,j,\ell} \left(d_{i,j}-1 + \frac{d_{i,j} m_{i,j}}{k}\right)
z_{i,j}^{(\ell)} \= \sum_{i,j,\ell} \left(\frac{k - r_{i,j} + m_{i,j}}{r_{i,j}}\right)
z_{i,j}^{(\ell)}.
\end{equation}
\end{prop}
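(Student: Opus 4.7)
\textbf{Proof proposal for Proposition~\ref{prop:cancov}.}

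The plan is to compare the two constructions on the common open set $X^{0}=X\setminus\divisor{\xi}_{\rm red}$, where both yield the standard \'etale cyclic cover associated with the character $\chi:\pi_{1}(X^{0})\to \mu_{k}\subset\CC^{*}$ measuring the monodromy of local $k$-th roots of $\xi$. On the algebraic side, over $X^{0}$ the defining section $s$ is nowhere vanishing, so $\op{Spec}_{\calO_{X^{0}}}\bigl(\bigoplus_{i=0}^{k-1}\calL^{-i}\bigr)$ is automatically normal and is the standard cyclic cover with monodromy character $\chi$. On the geometric side, the gluing rule $\varphi_{\alpha\beta}^{*}\omega_{\alpha,i}=\omega_{\beta,j}$ realizes exactly the same character. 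Two covers of $X$ that agree on $X^{0}$ and are normal (the algebraic one by construction, the geometric one because on $X\setminus X^{0}$ it is locally modeled by $w^{d_{i,j}}=z$) must be canonically isomorphic, since normalization in a function field is unique. This gives the first assertion and produces the abelian differential $\omega$ with $\omega^{k}=\pi^{*}\xi$ (the tautological section in the algebraic picture, the globalized $\omega_{\alpha,i}$'s in the geometric one). If $\omega'$ is another such differential, then $(\omega'/\omega)^{k}=1$ as a meromorphic function on $\wh X$, hence $\omega'/\omega$ is a locally constant $\mu_{k}$-valued function, i.e.~a $k$-th root of unity on each connected component.

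For the primitivity criterion, I would argue by descent using the deck action. If $\xi=\xi_{0}^{d}$ for some $(k/d)$-differential $\xi_{0}$ with $d>1$, the canonical $k$-cover of $\xi$ factors through the canonical $(k/d)$-cover of $\xi_{0}$ (either by matching characters or by observing that the $\calO_{X}$-algebra $\bigoplus \calL^{-i}$ breaks into a direct sum indexed by the factorization), so $\wh X$ is disconnected. Conversely, if $\wh X$ decomposes into $d$ connected components, the $\tau$-action permutes them transitively, so $d\mid k$ and the subgroup $\langle\tau^{k/d}\rangle$ preserves each component. The restriction of $\omega^{k/d}$ to one component is $\tau^{k/d}$-invariant and therefore descends to a $(k/d)$-differential $\xi_{0}$ on $X$ with $\xi_{0}^{d}=\xi$, contradicting primitivity.

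Assuming $\wh X$ is connected, the genus formula follows from Riemann--Hurwitz. The geometric picture shows that above $z_{i,j}$ there are exactly $r_{i,j}$ preimages, each with ramification index $d_{i,j}=k/r_{i,j}$. Hence the contribution of $z_{i,j}$ to the ramification divisor is $r_{i,j}(d_{i,j}-1)=k-r_{i,j}$, and summing over the $n$ marked points gives
\[
 2\whg-2\=k(2g-2)+\sum_{i,j}(k-r_{i,j})\=k(2g-2)+kn-\sum_{i,j}r_{i,j},
\]
which rearranges to the stated formula for $\whg$.

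Finally, for the divisor formula I would do a local computation at a preimage $z_{i,j}^{(\ell)}$. Choose a local coordinate $z$ on $X$ centered at $z_{i,j}$ with $\xi=u\,z^{m_{i,j}}(dz)^{k}$ for some unit $u$, and a local coordinate $w$ on $\wh X$ with $z=w^{d_{i,j}}$ (up to units). Then
\[
 \omega^{k}\=\pi^{*}\xi\=(\text{unit})\cdot w^{d_{i,j}m_{i,j}+k(d_{i,j}-1)}(dw)^{k},
\]
so $\ord_{z_{i,j}^{(\ell)}}\omega=\frac{d_{i,j}m_{i,j}}{k}+(d_{i,j}-1)=\frac{m_{i,j}+k-r_{i,j}}{r_{i,j}}$, matching~\eqref{eq:divpullback}. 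The main subtlety in this proof is the primitivity/connectedness equivalence, since one must carefully reconstruct a $(k/d)$-th root from a $\tau^{k/d}$-invariant descent, and all other steps are either standard cyclic-cover facts or direct local computations.
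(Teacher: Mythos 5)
Your argument is correct in substance and reaches all the conclusions, but it takes a partly different route from the paper and contains one indexing slip worth fixing. For the identification of the two constructions, the paper works with the tautological section of the symmetric power map $\rho:\VV(\calL^{-1})\to\VV(\calL^{-k})$ and observes that taking $\rho$-preimages amounts to gluing local $k$-th roots; you instead match the two covers over $X^{0}$ via the monodromy character of local $k$-th roots of $\xi$ and invoke uniqueness of normalization. Both are standard and valid; yours avoids the geometric-line-bundle formalism at the cost of having to check normality of the geometric cover at the branch points, which you do correctly via the local model $w^{d_{i,j}}=z$. For the connectedness criterion the paper simply cites \cite[Lemma~3.15]{esvivanishing}, whereas you give a self-contained descent argument; this is genuinely added content, but note that the stabilizer of a connected component of $\whX$ under the transitive $\ZZ/k$-action on the $d$ components is $\langle\tau^{d}\rangle$, of order $k/d$, and not $\langle\tau^{k/d}\rangle$ as you wrote: for $k=6$ and $d=2$ the element $\tau^{3}$ swaps the two components. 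The descent still goes through after this correction, since $(\tau^{d})^{*}\omega^{k/d}=\zeta^{k}\,\omega^{k/d}=\omega^{k/d}$ shows that $\omega^{k/d}$ is invariant under the correct stabilizer and hence descends along the degree-$k/d$ cover from one component to $X$, producing the $(k/d)$-differential $\xi_{0}$ with $\xi_{0}^{d}=\xi$. The genus computation, the local computation of $\divisor{\omega}$ (which is the paper's remark that $\divisor{\omega}=\tfrac1k\divisor{\pi^{*}\xi}$ made explicit), and the uniqueness of $\omega$ up to a $k$-th root of unity on each connected component all coincide with the paper's proof.
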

\begin{proof}
The algebraic construction gives a
section~$\wh{s}$ of $\pi^*\calL$ such that $\wh{s}^k = \pi^{*}s$, and the associated divisor of $\wh{s}$ is
$\sum_{i,j,\ell} \tfrac{j}{r_{i,j}} z_{i,j}^{(\ell)}$, as one can see from the
following interpretation (cf.~\cite[Remark~3.14 b)]{esvivanishing}). If
$\VV(\calL^{-k})$ is the geometric line bundle associated with $\calL^{k}$,
then global sections of $\calL^{k}$ are geometric sections of $\VV(\calL^{-k})$.
There is a natural symmetric power map $\rho: \VV(\calL^{-1}) \to \VV(\calL^{-k})$
of geometric bundles over~$X$,
and $\wh{X} = \rho^{-1}(s(X))$ by construction. Consequently, $\wh{X}$ comes
with a natural map to $\VV(\calL^{-1})$ and hence also with a map to its pullback
to $\wh{X}$. Since $K_{\wh{X}} = \pi^* K_X \otimes \calO_{\wh{X}}(R)$, we can
interpret~$\wh{s}$ as a section $\omega$ of $K_{\wh{X}}$ with $\omega^k = \pi^{*}\xi$.
\par
Taking the $\rho$-preimage of a section of $\VV(\calL^{-k})$ is equivalent to gluing
locally $k$-th roots of the section in the unique consistent way, as in the geometric construction.
This implies that the two constructions agree away from the branch points, and hence also agree across the branch points,
as guaranteed by the uniqueness of a smooth extension.
\par
The connectedness claim is shown in \cite[Lemma~3.15]{esvivanishing}. The expressions of the genus and the divisor of $\pi^* \xi$ follow from the Riemann-Hurwitz formula, since we have already determined the
branching orders of $\pi$. In fact, from the geometric construction of the canonical cover it is obvious that $\divisor\omega = \tfrac1k
\divisor{\pi^* \xi}$, and from the algebraic construction $\divisor\omega$
is equivalently given by $R + \sum_{i,j,\ell} (d_{i,j}\ell_{i,j}+ \tfrac{j}{r_{i,j}}) z_{i,j}^{(\ell)}$.
\par
Finally note that if $\widetilde{\omega}$ is another abelian differential on $\whX$ with $\widetilde{\omega}^k = \pi^{*}\xi$, then
$\widetilde{\omega}/\omega$ is a function on $\whX$, whose $k$-th power is one. Hence this function
is equal to a $k$-th root of unity on every connected component of $\whX$.
\end{proof}
\par

\subsection{The Lie derivative}
\label{subsec:Lie}
The Lie derivative $\mathsf{L}_{v}$ associated to
a vector field $v$ applied to a symmetric tensor~$T$ is defined as $\mathsf{L}_{v} = \tfrac{\partial}{\partial t} \varphi_t^* T|_{t=0}$, where $\varphi_t$ is
the one-parameter group of diffeomorphisms generated by~$v$. Axiomatically one can describe
the collection of maps $\mathsf{L}_{v}: K_X^{k} \to  K_X^{ k}$ for all $k \in \ZZ$
by requiring that
\begin{itemize}
\item[(i)] $\mathsf{L}_{v}$ is the directional derivative for $k=0$.
\item[(ii)] It satisfies the Leibniz rule
$$ \mathsf{L}_{v}(S \otimes T) = \mathsf{L}_{v}(S) \otimes T +  S \otimes \mathsf{L}_{v}(T)$$
for tensor products and the analogous Leibniz rule for contractions.
\item[(iii)] $\mathsf{L}_{v}$ commutes with exterior derivative on functions.
\end{itemize}
\par
We need the Lie derivative with~$v$ as argument instead, that is, we define
\begin{equation}
\Lied:T_{X}(-\divisor\xi_{\rm red})\to K_{X}^{k}(-\divisor\xi), \quad
\Lied(v) \coloneqq \mathsf{L}_{v}(\xi)
\end{equation}
to be the  {\em $k$-Lie derivative associated to $\xi\in H^{0}(X,K_{X}^{k})$}. Using the axioms given above, we find by induction on $k$ that
in the local coordinate $z$ where $\xi=a(z)( d z)^{k}$, the map~$ \Lied$ is given by
\begin{equation}\label{eq:Liederi}
 \Lied\Bigl(b(z)\tfrac{\partial}{\partial z}\Bigr) \= (a'b+kab' )(dz)^k,
\end{equation}
which is the main property we need in the sequel.
\par
\begin{proof}[Proof of Theorem~\ref{thm:DeformkDiff}.] We abbreviate $\divxi = \divisor\xi$.
The first order deformations of a smooth pointed curve $(X,z_1,\ldots, z_n)$
are given by one-cocycles $\left\{ D_{\alpha\beta}\right\}$ with
values in $T_X(-\divxired)$ for a cover of $X$ by open sets~$\{U_\alpha\}$.
Let $\varphi_{\alpha\beta}$ denote the transition function on the
intersection $U_\alpha \cap U_\beta$. We determine over which of these deformations we can extend the
$k$-differential~$\xi$. A deformation of~$\xi$ is locally of the form
\[\widetilde{\xi}_{\alpha}\=\xi_{\alpha}+\epsilon\xi'_{\alpha}\=f_{\alpha}(u_{\alpha})( d u_{\alpha})^{k}+\epsilon g_{\alpha}(u_{\alpha})( d u_{\alpha})^{k},\]
where $\epsilon^2 = 0$, and we require that  $\ord_{p}(\xi'_{\alpha})\geq\ord_{p}(\xi_{\alpha})$
at every point $p\in X_{\alpha}$ for deformations that preserve
the type $(m_1,\ldots,m_n)$ of the $k$-differential.
Moreover, we have
\begin{eqnarray*}
{\varphi^\ast_{\alpha\beta}}(\widetilde{\xi}_{\alpha}) &=&  \left[ f_{\alpha}(u_{\alpha}+\epsilon D_{\alpha\beta}(u_{\alpha}))+\epsilon g_{\alpha}(u_{\alpha})\right]\left( d(u_{\alpha}+\epsilon D_{\alpha\beta}(u_{\alpha}))\right)^{k}  \\
&=& \xi_{\alpha}+\epsilon\left[(f_{\alpha}'D_{\alpha\beta}+kf_{\alpha}D_{\alpha\beta}')( d u_{\alpha})^{k}+\xi'_{\alpha}\right]\\
&=& \xi_{\alpha}+\epsilon\left[\Lied[\xi_{\alpha}](D_{\alpha\beta})+\xi'_{\alpha}\right].
\end{eqnarray*}
Hence the pairs $ \lbrace (D_{\alpha\beta}, \xi'_{\alpha}) \rbrace     \in C^{1}(\{U_{\alpha}\}, T_X(-\divxired))
\oplus C^{0}(\{U_{\alpha}\}, K^{k}_{X}(-\divxi))$ glue to a first order deformation of $(X,\xi,z_{1},\ldots,z_{n})$ if and only if
${\varphi^\ast_{\alpha\beta}} (\widetilde{\xi}_{\alpha})-\widetilde{\xi}_{\beta}=0$, that is $\Lied[\xi_{\alpha}](D_{\alpha\beta})+\xi'_{\alpha}-\xi'_{\beta}=0$ for all $\alpha,\beta$. Moreover, the trivial deformations are given by the trivial deformations of the curve and the deformations such that $\Lied[\xi_{\alpha}](D_{\alpha\beta})+\xi'_{\alpha}=0$ at the level of the differential. Hence the deformation of $(X,\xi)$ are parameterised by the first hypercohomology group of the complex~$\cplxliek$.
\par
The filtration of the complex in the middle of the following diagram
\begin{equation}\label{equation:suiteExacteCourteDeDeformation1}
\begin{xy}
 \xymatrix{
 	0 \ar[r]& 0 \ar[r]\ar[d]&   T_{X}(-\divxired) \ar[r]\ar[d]^{\Lied}& T_{X}(-\divxired) \ar[r]\ar[d]& 0
 \\
 	0 \ar[r]& K_{X}^{k}(-\divxi) \ar[r]&    K_{X}^{k}(-\divxi) \ar[r]& 0 \ar[r]& 0
 	}
\end{xy}
\end{equation}
gives a short exact sequence
of two-term complexes. The associated long exact sequence of cohomology is
\par
\bes
\begin{tikzpicture}[scale=2]
\matrix(m)[matrix of math nodes,column sep=15pt,row sep=15pt]{
  0 & H^{0}(X,T_{X}(-\divxired)) & H^{0}(X, K_{X}^{k}(-\divxi))
& H^{1}(X,\cplxliek ) & \\
    &   H^{1}(X,T_{X}(-\divxired))  & H^{1}(X, K_{X}^{k}(-\divxi))
& H^{2}(X,\cplxliek) & 0\\
};
\draw[->,font=\scriptsize,every node/.style={above},rounded corners]
  (m-1-1) edge (m-1-2)
  (m-1-2) edge  [above] node {$\beta_{0}$} (m-1-3)
  (m-1-3) edge  [above] node {$\gamma_{0}$} (m-1-4)
  (m-1-4.east) --+(5pt,0)|-+(0,-7.5pt)-|([xshift=-5pt]m-2-2.west)--(m-2-2.west)
[below] node {$\alpha_{1}$}
  (m-2-2) edge [above] node {$\beta_{1}$} (m-2-3)
  (m-2-3) edge  [above] node {$\gamma_{1}$} (m-2-4)
  (m-2-4) edge (m-2-5)
;
\end{tikzpicture}
\ees
The Riemann-Roch formula implies that $h^{1}(X,\cplxliek )=2g-2+n+h^{2}(X,\cplxliek )$.
We want to compute $H^{2}(X,\cplxliek)$, which coincides with the cokernel of the map
 $$\beta_{1}:H^{1}(X,T_{X}(-\divxired))  \to H^{1}(X, K_{X}^{k}(-\divxi)).$$
\par
By Serre duality, $H^{2}(X,\cplxliek)$ is dual to the kernel of
the map
$$\beta_{1}^{\vee}=H^{0}(\Lied^{\vee}):H^{0}(X,K_{X}^{1-k}(\divxi))  \to H^{0}(X, K_{X}^{2}(\divxired)),$$
where $\Lied^{\vee}: K_{X}^{1-k}(\divxi)\to K_{X}^{2}(\divxired)$ is the dual map of $\Lied$. We compute this dual map $\Lied^{\vee}$ in local coordinates.
 Let $\tau$ and $v$ be respectively a local section of $K_{X}^{1-k}(\divxi)$ and $T_{X}(-\divxired)$. We fix a local coordinate $z$ on an open subset $U$ such that
 $$\xi=a(z) (dz)^{k}, \quad \tau=c(z) (\partial_{z})^{k-1}, \quad v=b(z)\partial_{z}.$$
 By Equation~\eqref{eq:Liederi} we have
$$\left<c(\partial_{z})^{k-1},\Lied(v) \right>\=\int_{U}c(a'b+kab') dz
\= -\int_{U}b((k-1)a'c+kac') dz, $$
since $d(kabc)=k(a'bc+ab'c+abc')$. The dual of the $k$-Lie derivative is thus
locally given by $\Lied^{\vee}(\tau)=-((k-1)a'c+kac')(dz)^{2}$. Denote by $\Lambda$ the kernel of the map $\Lied^{\vee}$, which
 is given by those $\tau\in H^{0}(X,K_{X}^{1-k}(E))$ that satisfy the differential
equation $(k-1)a'c+kac' = 0$ locally everywhere. The solutions of this equation are simply given
by $c=r a^{(1-k)/k}$, where $r$ is a constant in~$\CC$. Therefore, the kernel $\Lambda$ is at most one-dimensional.
If $a^{1/k}$ is not globally defined, i.e., if~$\xi$ is not the $k$-th power of an abelian differential, then $\Lambda$ is clearly zero.
If $\xi = \omega^k$ for an abelian differential~$\omega$ on $X$, then $E = k \divisor{\omega}$ and $\divisor{a^{(1-k)/k}} = (1-k) \divisor{\omega}$. If $\omega$ is strictly meromorphic, then $r$ has to be zero, for otherwise $\divisor{c}+ E  = \divisor{\omega}$ is not effective, contradicting that $c$ is a section of $\calO_X(E)$, hence in this case $\Lambda$ is also zero. If $\omega$ is holomorphic, the same argument implies that $r$ can be any constant, hence $\Lambda$ is one-dimensional. Summarizing the above discussion, we have shown that $h^{2}(X,\cplxliek)=1$ if $\xi$ is the $k$-th power of a holomorphic abelian differential and is zero otherwise. This implies that the dimension of the tangent space of $\komoduli(m_{1},\ldots,m_{n})$ at $(X,\xi)$ is $2g-1+n$ if $\xi$ is the $k$-th power of a holomorphic abelian differential and is $2g-2+n$ otherwise.
\par
Smoothness of the strata of abelian differentials is known, hence we assume that $\xi$ is not the $k$-th power of an abelian differential. We want that the deformation induced by the cocycle $ \lbrace (D_{\alpha\beta}, \xi'_{\alpha}) \rbrace $ of $\cplxliek$ extends to a higher order deformation.  It is a classical fact  that the deformation of the complex structure induced by $\lbrace D_{\alpha\beta}\rbrace$ extends to higher order if and only if its coboundary in $H^{2}(X,T_{X})$ vanishes (see e.g., \cite{kodaira}). Similarly, we obtain that the deformation given by the cocycle $ \lbrace (D_{\alpha\beta}, \xi'_{\alpha}) \rbrace $ extends if and only if its coboundary in $H^{2}(X,\cplxliek)$ vanishes. We have shown that the second hypercohomology group of $\cplxliek$ is trivial when $\xi$ is not the $k$-th power of an abelian differential. This implies that the deformations of $(X,\xi)$ are unobstructed, and consequently the stratum is smooth at $(X,\xi)$.
\end{proof}
\par
\begin{proof}[Proof of Theorem~\ref{thm:periodCoorkDiff} and Corollary~\ref{cor:period}.]
We start with the case when there are no poles of order $m_{i}\leq -k$, i.e. $P = \emptyset$.
For a singularity $p$ with $\ord_p \xi = m > -k$, one can choose a suitable local coordinate $z$ such that
$\xi = a(z) (dz)^k = z^{m} (dz)^k$ locally at $p$ which corresponds to $z = 0$ (see Proposition~\ref{prop:standard_coordinates}).
By definition of $E_{\rm red}$, a local section $b(z) \tfrac{\partial}{\partial z}$ of $T_{X}(-E_{\rm red})$ at $p$ has the form
$b(z) = \sum_{i=1}^{\infty} b_i z^i$. Then the Lie derivative $\cplxliek$ maps $b$ to
$a'b + kab' = z^{m} \sum_{i=1}^\infty (m + ki ) b_i z^{i-1}$, which can be any element in $z^m\CC[[z]]$.
It follows that $\cplxliek$ is surjective in this case and so the first hypercohomology of $\Lied$
is isomorphic to the first cohomology of $\Lambda_k = {\rm Ker}(\Lied)$. This kernel is a
$\CC$-local system of rank one, generated outside of the zeros of $\xi$ by $a^{-1/k}$ in
the local coordinates used for~\eqref{eq:Liederi}. Since these $a^{-1/k}$ correspond to
the local patches used for the geometric construction of the canonical cover, it follows
that $\Lambda_k$ is trivialized on~$\wh{X}$. More precisely, $\pi^* \Lambda_k =
\wh{\jmath}_{!}\CC$ where $\wh{\jmath}: \wh{X} \setminus \wh{E}_{\rm red} \to
\wh{X}$ is the inclusion defined above. On the other hand, $H^{1}(\whX, \wh{\jmath}_{!}\CC) = H^1(X, \pi_* \wh{\jmath}_{!}\CC)$ by the degeneration
of the Leray spectral sequence,  on which $\tau$ acts equivariantly. Moreover,
$\pi_* \wh{\jmath}_{!}\CC$ splits as a direct sum of its $k$~eigenspaces for the action of~$\tau$,
each of which is a local system of rank one. Since there is a canonical injective map
$\Lambda_k \to \pi_* \pi^* \Lambda_k = \pi_* \wh{\jmath}_{!}\CC$ and since~$\Lambda_k$ is locally
generated by the element $a^{-1/k}$ in the eigenspace where~$\tau$ acts by the $k$-th root
of unity $\zeta$, we obtain that $H^{1}(\whX, \wh{\jmath}_{!}\CC)^{\tau = \zeta} = H^1(X,\Lambda_k)$,
as claimed.
\par
In the general case when there are poles of orders $m_{i}\leq -k$, i.e. $P \neq \emptyset$, recall that we defined the inclusions $\imath: {X} \setminus {P} \to {X}$ and
$\jmath: {X} \setminus {E}_{\rm red} \to {X} \setminus {P}$.  The comparison between singular and sheaf cohomology
and the trivial case of the Leray spectral sequence for maps of relative dimension zero gives the isomorphisms
between the eigenspaces
\bas
H_{1}(\whX \setminus \wh{P}, \wh{Z} ;\CC)_{\tau = \zeta}^\vee
\= H^1(\whX \setminus  \wh{P}, \wh\jmath_{!}\,\CC)_{\tau = \zeta}
\=  H^{1}(\whX, \wh{\imath}_*(\wh\jmath_{!}\,\CC))_{\tau = \zeta},
\eas
and the equivalence of the theorem and the corollary. Since the complex
$$ 0 \to \Lambda_k \to \imath^*(T_X(-\divxired)) \to \imath^*(K_{X}^{k}(-\divxi)) \to 0 $$
on $X \setminus P$ is still exact, we deduce that
\bas
H^1(\whX \setminus  \wh{P}, \wh\jmath_{!}\,\CC)_{\tau = \zeta} \=
H^1(X \setminus P, \Lambda_k)
\= H^1(X, \imath_* \Lambda_k) \= H^1(X, \widetilde{\cplxliek}),
\eas
where $\widetilde{\cplxliek}$ is the complex
\bes
\Bigl(\imath_* \imath^*(T_X(-\divxired)) \to \imath_* \imath^*(K_{X}^{k}(-\divxi)) \Bigr)
\cong \imath_*(\calO_{X \setminus P}) \otimes \Bigl( T_X(-\divxired) \to K_{X}^{k}(-\divxi) \Bigr).
\ees
Since $\imath_*(\calO_{X \setminus P})$ is the sheaf of functions with arbitrary
poles at points in~$P$, there is an obvious inclusion of complexes
$\cplxliek \to \widetilde{\cplxliek}$. To conclude, we have to show that
this inclusion induces an isomorphism on the first cohomology groups. The
cokernel complex consists of the sum over all points $z_j$ with $m_j\leq-k$
of the Lie derivative maps on polar parts
\be \label{eq:LiePolarpart}
\CC((z)) / z \CC[z] \to \CC((z)) / z^{m_j} \CC[z].
\ee
We claim that this map is surjective. In fact, for a singularity~$p$ with
$\ord_p \xi = m \leq -k$ one can choose a suitable local coordinate~$z$
such that  $a(z) = z^m$ if $k\nmid m$, such that $a(z) = \tfrac{r}{z^{k}}$
if $m = -k$, or such that $a(z) = (z^{m/k} + \tfrac{s}{z})^{k}$
if $m < -k$ and $k\mid m$ (see Proposition~\ref{prop:standard_coordinates}). We give the details in the last case, the first two being easier.
Write $m = \ell k$ with $\ell \leq -2$. For any $j \leq 0$ and $b=z^j$
we compute
\bas
a'b + kab' &\= (z^\ell + \tfrac{s}z)^{k-1} z^{j-1} k
((\ell+j) z^\ell +(j-1)\tfrac{s}z) \\
& \= k(\ell+j) z^{m+j-1} + \ \mbox{higher order terms},
\eas
which implies the surjectivity of~\eqref{eq:LiePolarpart}.
Consequently, the first (and of course also the
second cohomology) of~\eqref{eq:LiePolarpart} vanishes and the long exact
sequence of hypercohomology groups implies the claim.
\end{proof}

\subsection{$(1/k)$-translation structure}
\label{subsec:k-trans}

Recall that abelian and quadratic differentials correspond to translation and half-translation surfaces, hence they can be realized as plane polygons with certain edge identification under translation and rotation by $\pi$, respectively. Edges of the polygons as complex vectors correspond to period coordinates, hence in these cases it is geometrically clear that they control deformations of differentials in the respective strata.
\par
In Corollary~\ref{cor:period} we have established period coordinates for the strata of $k$-differentials for all~$k$. We want to describe $k$-differentials by certain generalized translation structure, so that period coordinates can be visible under the corresponding flat geometric presentation.
\par
First, we define a building block that will help us describe the neighborhood of a singularity (under the induced flat metric)
of a $k$-differential. Take $(1/k)$-th of a disk in $\CC$ under the standard Euclidean metric, i.e., a sector bounded by an arc of angle $2\pi / k$. We say that it is a \emph{$(1/k)$-disk}. Take $n$ copies of such $(1/k)$-disks, each with two boundary rays $A_i$ and $B_i$, such that $B_i$ is identified with $A_{i+1}$ by translation for $i=1, \ldots, n-1$. The total angle at the center $p$ of the disks is $2\pi n / k$. Hence for identifying $B_n$ with $A_{1}$, we need rotation by degree $2\pi r/k$, where $r$ is the remainder of $n$ mod $k$. In other words, the last identification is induced by $z \mapsto \zeta^{r} z + c$, where $\zeta = e^{2\pi i/k}$ and $c$ is a constant --- in particular, it is a translation if and only if $k\mid n$. After gluing these disks as above, we say that $p$ is a \emph{cone point of angle $2\pi n /k$}.
\par
Now we define a \emph{$(1/k)$-translation surface} as a closed topological surface $X$ with a finite set of points $\divxi$ satisfying the following conditions:
\begin{itemize}
\item There is an atlas of charts $X\backslash \divxi \to \CC$ whose transition functions are of type
$z \mapsto \zeta^{r} z + c$, where $r\in \ZZ$ and $c$ is a constant.
\item  For each $p\in \divxi$, there is a homeomorphism
of a neighborhood of $p$ to a neighborhood of a cone point of angle $2\pi n /k$, which is an isometry away from $p$.
\end{itemize}

\begin{prop}
\label{prop:k-trans}
There is a one-to-one correspondence between $(1/k)$-translation surfaces and $k$-differentials whose pole orders are at most $k-1$. In particular, a singularity of a $k$-differential with order $m$ corresponds to a cone point of angle $2\pi (k+m)/k$.
\end{prop}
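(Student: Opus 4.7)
The plan is to establish the correspondence in both directions using local normal forms and to verify the formula for the cone angle from the singularity data.

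Going from a $(1/k)$-translation surface $(X,\divxi)$ to a $k$-differential, I would set $\xi = (dz_\alpha)^k$ in each chart $U_\alpha$. Because the transition functions have the form $z_\alpha = \zeta^r z_\beta + c$ and $\zeta^{rk}=1$, these local expressions agree on overlaps and glue to a global $k$-differential on $X\setminus\divxi$. To extend across a cone point $p$ of angle $2\pi n/k$, I would introduce a smooth holomorphic coordinate $w$ at $p$ such that, on any single unfolded sector, the flat coordinate is $z = c\,w^{n/k}$; this is the unique way, up to $\zeta^r$-rotation, to map the disk of angle $2\pi$ in $w$ onto the cone of angle $2\pi n/k$ in $z$. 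A direct computation gives $(dz)^k = c^k (n/k)^k w^{n-k}(dw)^k$, so $\xi$ extends meromorphically with $\ord_p \xi = n-k$. Since $n\geq 1$, the pole order is at most $k-1$, and writing $m = n-k$ yields the stated cone-angle formula $2\pi(m+k)/k$.

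For the other direction, given $\xi$ with pole orders at most $k-1$, I would construct a $(1/k)$-translation atlas on $X$. On the smooth locus, locally $\xi = f(z)(dz)^k$ with $f$ nonvanishing, and the equation $(dw)^k = \xi$ is solved by $w = \int f^{1/k}\,dz$, where the choice of $k$-th root is unique up to $\zeta^r$; the resulting flat charts have transitions of the form $w\mapsto\zeta^r w + c$, as required. At a singularity $p$ of order $m>-k$, I would invoke Proposition~\ref{prop:standard_coordinates} to choose a coordinate $z$ in which $\xi = z^m(dz)^k$. The change of variables $w = \tfrac{k}{m+k}\, z^{(m+k)/k}$ is well defined on any sector (the exponent $(m+k)/k$ is positive exactly because $m>-k$) and satisfies $\xi = (dw)^k$; as $z$ sweeps angle $2\pi$ around $0$, $w$ sweeps angle $2\pi(m+k)/k$, realizing $p$ as a cone point of that angle.

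To conclude, I would observe that the two constructions are mutually inverse, since both pin down $\xi$ via the local forms $\xi=(dw)^k$ outside $\divxi$ and $\xi = z^m(dz)^k$ at singularities, and the explicit formulas $z = c\,w^{n/k}$ and $w = \tfrac{k}{m+k} z^{(m+k)/k}$ with $n=m+k$ are inverse to one another, up to the $\zeta^r$-ambiguity that is absorbed into the transition functions. The main delicate point is the extension step in the first direction: one must justify that $z = c\,w^{n/k}$ genuinely descends from the angular cover to a holomorphic identification of a neighborhood of $p$ with a disk in the $w$-plane, which uses precisely the hypothesis $n\geq 1$. This constraint translates into the restriction $m\geq 1-k$ on the pole orders, explaining why the bijection is exactly with $k$-differentials whose pole orders are at most $k-1$.
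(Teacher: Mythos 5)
Your proof is correct and follows essentially the same route as the paper's: both reduce to the standard coordinate $\xi = z^m(dz)^k$ from Proposition~\ref{prop:standard_coordinates} and read off the cone angle $2\pi(k+m)/k$ by formally writing $\xi=(z^{m/k}\,dz)^k$, i.e., by the substitution $w=\tfrac{k}{m+k}z^{(m+k)/k}$. You simply spell out the routine verifications that the paper leaves implicit, namely the affine atlas on the smooth locus and the converse direction from a $(1/k)$-translation structure back to a $k$-differential.
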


\begin{proof}
At a singularity $p$ of a $k$-differential $\xi$, we choose a suitable local coordinate $z$ such that
$\xi = z^m (dz)^{k}$ (see Proposition~\ref{prop:standard_coordinates} for the general statement and proof, noting that here we are dealing with poles of order at most $k-1$). Rewrite it formally as $(z^{m/k} dz)^k$. Then a flat geometric neighborhood of $p$ can be identified with gluing $(k+m)$ copies of a $(1/k)$-disk as above, hence the claim follows.
\end{proof}

As in the cases of abelian and quadratic differentials, one can equivalently describe $k$-differentials in terms of cutting and pasting polygons (see \cite[Definitions 1.1, 1.5, and 1.7]{wrightSurvey}). In this sense, a $(1/k)$-translation surface is a collection of polygons in $\CC$ with edge identifications as follows:
\begin{itemize}
\item All the edges are grouped in pairs such that the edges in the same pair are identified by translation and rotation by degree $2\pi n/k$ for some $n\in \ZZ$.
\item For the edges in the same pair, the polygon interior nearby one edge does not map to the polygon interior nearby the other edge.
\end{itemize}

Two such collections of polygons define the same $(1/k)$-translation surface if and only if one can be refined and re-glued to form the other by translations and rotations of degree $2\pi n/k$ for some $n\in \ZZ$.
\par
For meromorphic abelian differentials, the flat geometric neighborhood of a pole can be described by half-infinite cylinders or by gluing (broken) half-planes along their boundary via translation, as introduced by Boissy~\cite{boissymero}. Such construction can be similarly carried out for $k$-differentials with arbitrary pole orders. The building blocks for a pole of order $\geq k$
are either half-infinite cylinders or (broken) $(1/2k)$-planes, where the edge identification is given by translation and rotation of degree $2\pi n /k$ for $n\in \ZZ$.
\par
Finally we provide an example to illustrate the flat geometric meaning of period coordinates in Corollary~\ref{cor:period}.
\begin{exa}
Take a primitive $\xi \in \komoduli((2g-2)k)$. Let $z$ be
the unique zero of $\xi$. Since $k\mid (2g-2)k$,
the canonical cover $\pi: \whX\to X$ is unramified. Let $\tau$ be the deck transformation of $\whX$ corresponding to $\zeta$, and let $\wh z_1, \ldots, \wh z_k$ be the preimages of $z$ such that $\tau(\wh z_i) = \wh z_{i+1}$. By the Riemann-Hurwitz formula,
$$ 2\whg - 2 \= k (2g-2), $$
hence we have
$$ \dim H_1(\whX, \CC)/\pi^{*} H_1(X, \CC) \= 2\whg - 2g \= (k-1)(2g-2).$$
There is a basis $\wh\gamma_1, \ldots, \wh\gamma_{2g-2}$ of absolute homology classes that belong to $V_\zeta$. For the relative part, take a path $\alpha_i$ that joins $\wh z_i$ to $\wh z_{i+1}$ for $i = 1, \ldots, k$ such that
$\tau(\alpha_i) = \alpha_{i+1}$ in $H_1(\whX, \wh z_1, \ldots,\wh z_k; \CC)$. Let
$$\gamma_{2g-1} \:= \sum_{i=1}^k \zeta^{-i}\alpha_i. $$
Then
\begin{equation}\label{eq:equivhomo}
\tau^{*}\gamma_{2g-1} \= \sum_{i=1}^k \zeta^{-i} \alpha_{i-1} \= \zeta \gamma_{2g-1},
\end{equation}
hence $\gamma_{2g-1}\in V_\zeta$ and lies in the relative part. Therefore, $\wh\gamma_1, \ldots, \wh\gamma_{2g-2}, \gamma_{2g-1}$
together form a basis of $V_\zeta$.
\par
The case of  $g=2$ and $k=3$ is pictured in Figure~\ref{fig:triple}. The bottom surface represents a primitive cubic differential $\xi$ in $\Omega^3\moduli[2](6)$, where the edges with the same labels are identified by translation (for $a$ and~$b$) and rotation of degree $2\pi / 3$ (for $c$ and $d$). In particular, the three vertices of the inner triangle are identified as the same point which forms the unique zero
of $\xi$. In the top three surfaces edges with the same labels are identified by translation, hence after gluing we obtain an abelian differential $\omega$, and the resulting surface admits a canonical triple cover of the bottom which maps $a_0, a_1, a_2$ to $a$, etc.
Note that in this case $\omega$ lies in the stratum $\omoduli[4](2, 2, 2)$. The absolute part of the equivariant homology consists of weighted sums of the preimages of $a$ and of~$b$ with weight pattern as in Equation~\eqref{eq:equivhomo}. The relative part is given by Equation~\eqref{eq:equivhomo}, where the $\alpha_{i}$ are the preimages of $c$.

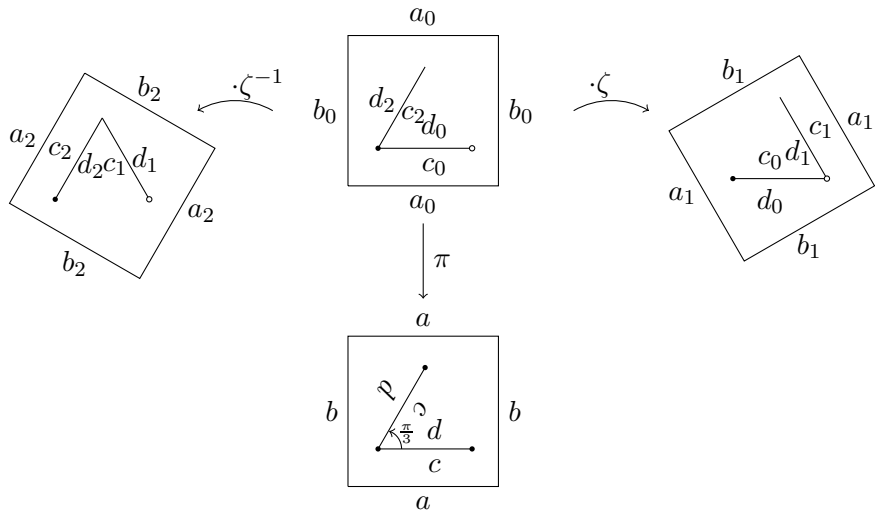
\begin{figure}[htb]
\centering
\begin{tikzpicture}

 \begin{scope}[xshift=-4.5cm,yshift=1.5cm,rotate=-120]

\draw[] (0,0)  -- coordinate[pos=.5](a1) ++(2,0) -- coordinate[pos=.5](b1)  ++(0,2)-- coordinate[pos=.5](a2) ++(-2,0)-- coordinate[pos=.5](b2) ++(0,-2);
 \draw (.4,.5) coordinate (P) -- coordinate[pos=.4](c1)coordinate[pos=.25](d) ++(1.25,0) coordinate(Q);
  \draw (P) -- coordinate[pos=.6](c2) ++(60:1.25) coordinate(R);

 \node[left] at (a1) {$a_{2}$};
 \node[right] at (a2) {$a_{2}$};
 \node[below] at (b1) {$b_{2}$};
 \node[above] at (b2) {$b_{2}$};
 \node[xshift=.1cm,yshift=-.2cm] at (c1) {$d_{2}$};
 \node[left] at (c1) {$c_{2}$};
 \node[xshift=-.2cm] at (c2) {$c_{1}$};
 \node[xshift=.2cm,yshift=.1cm] at (c2) {$d_{1}$};

  \fill (Q) circle (1pt);
  \fill[white] (R) circle (1pt);    \draw (R) circle (1pt);

 \end{scope}

 \begin{scope}[xshift=-1cm]
\draw[] (0,0)  -- coordinate[pos=.5](a1) ++(2,0) -- coordinate[pos=.5](b1)  ++(0,2)-- coordinate[pos=.5](a2) ++(-2,0)-- coordinate[pos=.5](b2) ++(0,-2);
 \draw (.4,.5) coordinate (P) -- coordinate[pos=.6](c1)coordinate[pos=.25](d) ++(1.25,0) coordinate(Q);
  \draw (P) -- coordinate[pos=.6](c2) ++(60:1.25) coordinate(R);

 \node[below] at (a1) {$a_{0}$};
 \node[above] at (a2) {$a_{0}$};
 \node[right] at (b1) {$b_{0}$};
 \node[left] at (b2) {$b_{0}$};
 \node[below] at (c1) {$c_{0}$};
 \node[above] at (c1) {$d_{0}$};
 \node[xshift=.1cm,yshift=-.2cm] at (c2) {$c_{2}$};
 \node[left] at (c2) {$d_{2}$};

  \fill[white] (Q) circle (1pt);    \draw (Q) circle (1pt);
  \fill (P) circle (1pt);

  \draw[->] (-1,1) arc  (60:120:1);\node[] at (-1.2,1.35) {$\cdot\zeta^{-1}$};
  \draw[->] (3,1) arc  (120:60:1);\node[] at (3.35,1.35) {$\cdot\zeta$};

 \draw[->] (1,-.5) -- (1,-1.5);\node[right] at (1,-1) {$\pi$};

 \begin{scope}[yshift=-4cm]
\draw[] (0,0)  -- coordinate[pos=.5](a1) ++(2,0) -- coordinate[pos=.5](b1)  ++(0,2)-- coordinate[pos=.5](a2) ++(-2,0)-- coordinate[pos=.5](b2) ++(0,-2);
 \draw (.4,.5) coordinate (P) -- coordinate[pos=.6](c1)coordinate[pos=.25](d) ++(1.25,0) coordinate(Q);
  \draw (P) -- coordinate[pos=.6](c2) ++(60:1.25) coordinate(R);

 \draw[->] (d) arc  (0:70:.25);
\node[] at (.78,.7) {\tiny{$\frac{\pi}{3}$}};

 \node[below] at (a1) {$a$};
 \node[above] at (a2) {$a$};
 \node[right] at (b1) {$b$};
 \node[left] at (b2) {$b$};
 \node[below] at (c1) {$c$};
 \node[above] at (c1) {$d$};
 \node[below,rotate=-120] at (c2) {$d$};
 \node[above,rotate=-120] at (c2) {$c$};

  \fill (P) circle (1pt);
  \fill (Q) circle (1pt);
  \fill (R) circle (1pt);

       \end{scope}
 \end{scope}

 \begin{scope}[xshift=6cm,rotate=120]

\draw[] (0,0)  -- coordinate[pos=.5](a1) ++(2,0) -- coordinate[pos=.5](b1)  ++(0,2)-- coordinate[pos=.5](a2) ++(-2,0)-- coordinate[pos=.5](b2) ++(0,-2);
 \draw (.4,.5) coordinate (P) -- coordinate[pos=.6](c1)coordinate[pos=.25](d) ++(1.25,0) coordinate(Q);
  \draw (P) -- coordinate[pos=.6](c2) ++(60:1.25) coordinate(R);

 \node[right] at (a1) {$a_{1}$};
 \node[left] at (a2) {$a_{1}$};
 \node[above] at (b1) {$b_{1}$};
 \node[below] at (b2) {$b_{1}$};
 \node[below] at (c1) {$d_{1}$};
 \node[right] at (c1) {$c_{1}$};
 \node[below] at (c2) {$d_{0}$};
 \node[above] at (c2) {$c_{0}$};

   \fill (R) circle (1pt);
  \fill[white] (P) circle (1pt);    \draw (P) circle (1pt);

 \end{scope}
 \end{tikzpicture}
  \caption{Canonical cover of a primitive cubic differential in $\Omega^3\moduli[2](6)$}
\label{fig:triple}
\end{figure}
\end{exa}

In Section~\ref{sec:examples} we will use the $(1/k)$-translation surface viewpoint to present a number of examples highlighting various aspects of the incidence variety compactification that will be discussed soon.


\section{Stable $k$-differentials and $k$-residues} \label{sec:stablek}

In this section we define in detail the ambient space that we will use
for the incidence variety compactification. For this purpose we need a local
standard form for $k$-differentials, and the notion of $k$-residues.
We will also recall the definition of level graphs and twisted differentials as introduced in~\cite{plumb}.

\subsection{Standard coordinates for $k$-differentials, and $k$-residues}
\label{subsec:k-residue}
Contrary to the case of abelian differentials, not every pole of a $k$-differential $\xi$ has a local invariant like the residue, independent of the choice of a local coordinate, but only poles of order divisible by~$k$ have such an invariant. We make it precise in the following proposition.
\begin{prop} \label{prop:standard_coordinates}
Given a $k$-differential $\xi$ on a sufficiently small disk~$V$
around $0$, denote by $m$ the order $\ord_0\xi$.
Then there exists a conformal map $\phi\colon(\Delta_R, 0) \to (V, 0)$ defined
on a disk of sufficiently small radius~$R$, and a number $r \in \CC$ such that
\begin{equation}\label{eq:standard_coordinates}
    \phi^*(\xi) \=
    \begin{cases}
      z^m\, (dz)^{k} &\text{if $m> -k$ or  $k\nmid m$,}\\
      \frac{r}{z^{k}}(dz)^{k} &\text{if $m = -k$,}\\
        \left(z^{m/k} + \frac{s}{z}\right)^{k}(dz)^{k} &\text{if $m < -k$ and $k\mid m$}
    \end{cases}
\end{equation}
where in the last case $s \in \CC$ and $s^k = r$. The germ of $\phi$ is unique up to
multiplication by a $(m+k)$-th root of unity when $m> -k$ or $k\nmid m$, and up to multiplication by
a non-zero constant if $m=-k$.
\end{prop}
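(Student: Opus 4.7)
The plan is to reduce everything to the classical normal forms for (possibly meromorphic) abelian differentials, using two devices: extracting a $k$-th root of $\xi$ when $k\mid m$, and passing to a local cyclic cover of degree $d = k/\gcd(m,k)$ when $k\nmid m$.

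First, assume $k\mid m$. Write $\xi = z^m g(z)(dz)^k$ with $g(0)\neq 0$ and pick a holomorphic branch of $g(z)^{1/k}$ on a sufficiently small disk; then $\omega := z^{m/k} g(z)^{1/k}\,dz$ is a (possibly meromorphic) abelian differential with $\ord_0 \omega = m/k$. I invoke the standard normal forms for abelian differentials: $\omega = w^\ell\,dw$ for a holomorphic zero of order $\ell\ge 0$; $\omega = (s/w)\,dw$ for a simple pole of residue $s$; and $\omega = (w^\ell + s/w)\,dw$ for a pole of order $-\ell \geq 2$ with residue $s$. The last form is obtained by iteratively killing the coefficients of $w^j\,dw$ for $j \neq -1,\ell$ via power-series coordinate changes. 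Raising to the $k$-th power converts these into the asserted form of~\eqref{eq:standard_coordinates}, with $r = s^k$ in the two pole cases.

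For $k\nmid m$, consider the cyclic cover $\pi\colon \tilde z \mapsto z = \tilde z^d$ of a smaller disk. A direct computation shows that $\pi^*\xi$ vanishes at $\tilde z = 0$ to order $d(m+k) - k$, which is a non-negative multiple of $k$ by construction of $d$. Applying the previous case to $\pi^*\xi$ yields a coordinate $\tilde w$ on the cover in which $\pi^*\xi$ takes the standard form. The critical step is the equivariant descent: combining uniqueness of the abelian normalization with $\tau^*\pi^*\xi = \pi^*\xi$ for the deck transformation $\tau\colon \tilde z \mapsto \zeta_d \tilde z$, I conclude that $\tau^*\tilde w = \nu\,\tilde w$ for some scalar $\nu$, and the relation $\tau^d = \mathrm{id}$ forces $\nu^d = 1$. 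Hence $w := \tilde w^d$ descends to a holomorphic coordinate on $V$, and a final rescaling $w\mapsto \lambda w$ absorbs the resulting overall constant to produce $\xi = w^m (dw)^k$.

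For uniqueness, suppose $\phi_1$ and $\phi_2$ both normalize $\xi$; then $\psi := \phi_1^{-1}\phi_2$ preserves the standard form, and after passing to a $k$-th root (or pulling back via the cover) it preserves the corresponding abelian normal form up to an overall $k$-th root of unity $\eta$. When $m > -k$ or $k\nmid m$, integrating $\psi(w)^{m/k}\psi'(w) = \eta\,w^{m/k}$ gives $\psi(w)^{m+k} = \eta^k w^{m+k} = w^{m+k}$, and discreteness of $(m+k)$-th roots of unity forces $\psi(w) = \zeta w$ with $\zeta^{m+k} = 1$. When $m = -k$, the equation $\psi'/\psi = 1/w$ combined with holomorphicity at $0$ forces $\psi(w) = \lambda w$ for arbitrary $\lambda \in \CC^*$. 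The main technical obstacle is the equivariant descent above, where one must carefully track the freedom in the abelian normalization through the $\ZZ/d$-action to ensure $\nu^d = 1$; a secondary point is the inductive power-series proof of the higher-pole abelian normal form used above.
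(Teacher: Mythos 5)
Your overall strategy --- reducing to the classical abelian normal forms by extracting a $k$-th root of $\xi$ when $k\mid m$, and by passing to a local cyclic cover of degree $d=k/\gcd(m,k)$ plus equivariant descent when $k\nmid m$ --- is close in spirit to the paper's proof, which instead uniformly pulls back along the degree-$k$ map $u\mapsto u^k$, normalizes the abelian $k$-th root $u^{m+k-1}(b_0+b_1u^k+\cdots)k\,du$, and descends by observing that the normalized differential is a function of $v^k$ alone. Your treatment of the case $k\mid m$ and your uniqueness argument are correct.

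There is, however, a genuine gap in the case $k\nmid m$ with $m<-k$. The order of $\pi^*\xi$ at $\tilde z=0$ is $d(m+k)-k$, which is indeed a multiple of $k$, but it is \emph{negative} whenever $m<-k$ (it is non-negative exactly when $m+k\geq\gcd(m,k)$, which happens precisely for $m>-k$). Hence ``the previous case'' that you apply on the cover is the meromorphic one with $k\mid m'$ and $m'<-k$, and it produces the normal form $\bigl(\tilde w^{m'/k}+s/\tilde w\bigr)^k(d\tilde w)^k$ carrying a residue parameter $s$. Your descent $w=\tilde w^d$ only yields $\xi=w^m(dw)^k$ if $s=0$; you neither flag this nor prove the vanishing, and for $s\neq 0$ the normalizing germ is not unique up to a root of unity, so even the step $\tau^*\tilde w=\nu\tilde w$ is not available as stated. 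The gap is fixable: a local $k$-th root $\tilde\omega$ of $\pi^*\xi$ satisfies $\tau^*\tilde\omega=e^{2\pi i(m+k)/k}\,\tilde\omega$, and this eigenvalue is $\neq 1$ precisely because $k\nmid m$; since $\Res_0$ is invariant under the rotation $\tau$, this forces $\Res_0\tilde\omega=0$ and hence $s=0$. (Equivalently, every exponent in the Laurent expansion of $\tilde\omega$ is congruent to $d(m+k)/k-1$ modulo $d$, and $-1$ is not among these exponents.) This is exactly the point the paper's argument dispatches by noting that its abelian $k$-th root has exponents congruent to $m+k-1$ modulo $k$ and therefore no residue unless $k\mid m$. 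With that addition your proof goes through.
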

We call the coordinate~$z$ provided in the proposition
the {\em standard coordinate} for $\xi$, call the number~$r$ the {\em $k$-residue} of~$\xi$, and
denote it by $r=\Resk_0\xi$. For convenience we also set $\Resk_0 \xi = 0$ if
$m> -k$ or $k\nmid m$. We remark that while for $m=-k$ the residue $\Resk_0\xi$ is simply the order $-k$ term of the Laurent series of $\xi$ at $0$, for $m\ne -k$, the $k$-residue does not admit such an easy description.
\par
This proposition for quadratic differentials (i.e. $k = 2$) was stated and proved in~\cite[Paragraph~6]{strebel}. The same argument works for general $k$ as well. For the reader's convenience, we include a proof as follows.
\begin{proof} Suppose in a
local coordinate $w$ around~$0$ the $k$-differential $\xi$ is given by
 \[\xi=w^{m}\left( a_{0}+a_{1}w+\cdots \right) (dw)^{k},\]
 with $a_{0}\neq 0$. Setting $w=\varphi(u)=u^{k}$
we obtain that
 \[\varphi^{\ast}(\xi)=u^{mk}\left( a_{0}+a_{1}u^{k}+\cdots \right) (ku^{k-1}du)^{k}.\]
Note that the $k$-th root of the power series  $a_{0}+a_{1}u^{k}+\cdots$ is
again a power series $b_{0}+b_{1}u^{k}+\cdots$ in $u^k$ with the property that
 \[\varphi^{\ast}(\xi)=  \left(u^{m+k-1}\left( b_{0}+b_{1}u^{k}+\cdots\right) k du\right)^{k}.\]
Let $\psi$ be a coordinate change that puts the abelian differential inside
the parenthesis on the right hand side into the standard form. In the first case $m > -k$ or
$m+k-1 \not \equiv -1 \pmod{k}$, this abelian differential has no residue, and hence
$$\psi^\ast \varphi^{\ast}(\xi) \=   (v^{m+k-1} dv)^k \= \frac{1}{k^k}
\, v^{mk} (d(v^k))^k\,.$$
Since this differential depends on $v^k$ only, setting $z=v^k$ provides the desired expression of the standard
coordinate. Moreover, for $v$ to satisfy that
$u^{m+k-1}(b_0 + b_1 u^k + \cdots ) k du = v^{m+k-1} dv$,
it suffices to have $u^{m+k}(c_0 + c_1 u^k + \cdots ) = v^{m+k}$ by integrating both sides, where $c_0\neq 0$. Writing $v = u  y$, the condition reduces to
$c_0 + c_1 u^k + \cdots  = y^{m+k}$, hence $y$ can be expressed in terms of $u^k$, which implies that $z = v^k = u^k y^k$ is a well-defined coordinate change from the original coordinate $w = u^k$.
\par
If $m=-k$, the existence of standard coordinates for abelian differentials implies the existence of $\psi$ such that
$$\psi^\ast \varphi^{\ast}(\xi) \=  k^k (s v^{-1} dv)^k \= r (v^{-k} d(v^k))^k\,.$$
Again this is the pullback under $v\mapsto z=v^k$ of a $k$-differential which is of the form $\tfrac{r}{z^k}(dz)^k$. The last case of $m<-k$ and $k\mid m$ is similar, except a more delicate point that now the abelian differential written in standard coordinate may have a residue.
\end{proof}
\par
From the flat geometric viewpoint of Section~\ref{subsec:k-trans}, the $k$-residue corresponds to a half-infinite cylinder for a pole of order $-k$ and to a half-infinite slit for poles of higher order (see the right of Figure~\ref{fig:triplepres}). We stress that the width of the slit or of the cylinder represents some $k$-th root of the $k$-residue.

\subsection{Stable $k$-differentials}
\label{subsec:stablek}

Let $f: \calC \to \barmoduli[g,n]$ be the universal curve over the Deligne-Mumford moduli space of $n$-pointed stable genus $g$ curves,
and let $\omega_f$ be the relative dualizing sheaf. Consider the space $\kobarmodulin := f_{*}(\omega_f^{k})$ as a vector bundle over $\barmoduli[g,n]$. This is to say, its fibers are spaces $H^0(X, \omega_X^k)$ parameterizing stable $k$-differentials $\xi$ over pointed stable curves $(X,z_1,\ldots,z_n)$, where $\omega_X$ is the dualizing sheaf of $X$. We call $(X,\xi,z_1,\ldots,z_n)\in\kobarmodulin$ {\em pointed stable (holomorphic) $k$-differentials}. In particular, it means that $X$ is at worst nodal, $z_1,\ldots,z_n$ are distinct smooth points of $X$, and $\xi$ is a collection of $k$-differentials $\xi_v$ on irreducible components $X_v$ of $X$, which may have poles of order up to $k$ at the nodes of $X$. Furthermore, $\xi$ is required to satisfy the matching residue condition, namely, at any node $q\in X$ identifying $q_1\in X_{v_1}$ with $q_2\in X_{v_2}$ the equality $\Resk_{q_1}\xi_{v_1}=(-1)^k\Resk_{q_2}\xi_{v_2}$ holds (as discussed in Section~\ref{subsec:k-residue}, for
$k$-differentials with poles of order at most $k$ the $k$-residue is equal to the coefficient of $z^{-k}(dz)^k$ in the Laurent expansion, and this equality for $k$-residues follows from  the matching residue condition for $\omega_X$ at $q$).
\par
Similarly to the case of meromorphic abelian differentials, for meromorphic $k$-differentials we have to twist the ambient space by the polar part of the differentials. We thus write $\mu=(m_1,\ldots,m_r,m_{r+1},\ldots,m_{r+s},m_{r+s+1},\ldots,m_n)$ with $m_1,\ldots,m_r>0$, $m_{r+1}=\cdots=m_{r+s}=0$, and $m_{r+s+1},\ldots,m_n<0$.
Denote by $\tilde{\mu} = (m_{r+s+1},\ldots,m_n)$ the polar part of~$\mu$. Now we define
$\kobarmodulin(\tilde{\mu}):= f_{*} ( \omega_f^{ k}(-\sum_{i=r+s+1}^n m_i Z_i))$, where the $Z_i$ are the sections of the universal curve corresponding to the marked points. We call points
$(X,\xi,z_1,\ldots,z_n)\in\kobarmodulin(\tilde{\mu})$  {\em pointed stable $k$-differentials}, which includes the holomorphic case by setting $\tilde{\mu} = \emptyset$.
 \par
Recall that the stratum $\komoduli[g](\mu)$ parameterizes $(X, \xi, z_1, \ldots, z_n)$ where $X$ is a smooth genus~$g$ curve, $z_1, \ldots, z_n$ are distinct marked points, and $\xi$ is a $k$-differential with zeros and poles at $z_i$ such that $\ord_{z_i}\xi = m_i$. It is clear that $\komoduli[g](\mu)$ is contained in $\kobarmodulin(\tilde{\mu})$. Note that there is a $\CC^*$-action on $\kobarmodulin(\tilde{\mu})$ by multiplication, which preserves the stratification of $\kobarmodulin(\tilde{\mu})$. We denote the corresponding quotients by adding a letter~$\PP$. Now we define the {\em incidence variety compactification} $\kivc{\mu}$ of the stratum $\komoduli[g](\mu)$ to be the closure of the stratum in $\PP\kobarmodulin(\tilde\mu)$. We remark that our notation implicitly means that the $n$ marked points are ordered. For the unordered version of the compactification, one can simply take the quotient by the respective symmetric group action.
\par
As explained in \cite{plumb}, the incidence variety compactification keeps both the scales of the limit differentials and the limit positions of zeros and poles, as $k$-differentials degenerate in a given stratum. Moreover, it determines the strata compactification in both the Deligne-Mumford space and in the Hodge bundle
by forgetting the differentials and the markings respectively. As can be seen by looking at $k$-th powers of abelian differentials, for which this is discussed in~\cite{plumb}, the incidence variety compactification records in general strictly more information than taking the strata compactification in either of the latter two spaces.

\subsection{Level graphs and twisted differentials}
\label{subset:level}

In order to describe the incidence variety compactification of the strata of $k$-differentials, we need to introduce twisted $k$-differentials. The terminology we use here is closely related to that of~\cite{plumb}, with the extra complications
coming from the particularities of $k$-residues.
\par
\begin{df}\label{df:twdk}
Let $\mu = (m_1,\ldots,m_n)$ be a tuple of integers with $\sum_{i=1}^n m_i = k(2g-2)$.
A {\em twisted $k$-differential $\eta$ of type $\mu$}
on a stable $n$-pointed curve $(X,z_1,\ldots,z_n)$
is a collection of (possibly meromorphic)
$k$-differentials $\eta_v$ on the irreducible components~$X_v$ of~$X$
such that no~$\eta_v$ is identically zero and the following conditions hold:
\begin{itemize}
\item[(0)] {\bf (Vanishing as prescribed)} Each $k$-differential $\eta_v$ is holomorphic and non-zero outside the nodes and marked points of $X_v$. Moreover, if a marked point $z_i$ lies on~$X_v$, then $\ord_{z_i} \eta_v=m_i$.
\item[(1)] {\bf (Matching orders)} For any node of $X$ that identifies $q_1 \in X_{v_1}$ with $q_2 \in X_{v_2}$,
$$\ord_{q_1} \eta_{v_1}+\ord_{q_2} \eta_{v_2} \= -2k. $$
\item[(2)] {\bf (Matching residues at poles of order $k$)} If at a node of $X$
that identifies $q_1 \in X_{v_1}$ with $q_2 \in X_{v_2}$ the condition $\ord_{q_1}\eta_{v_1}=
\ord_{q_2} \eta_{v_2}=-k$ holds, then
$$\Resk_{q_1}\eta_{v_1} \= (-1)^k\Resk_{q_2}\eta_{v_2}.$$
\end{itemize}
\end{df}
\par
We next recall the notion of level graphs (see also \cite[Section~1.5]{plumb}). Let $\Gamma$ be the dual graph of a nodal curve $X$.
A {\em full order} on the irreducible components of $X$ is a relation~$\succcurlyeq$ on
the set $V$ of vertices of $\Gamma$ that is reflexive, transitive,
and such that for any $v_1, v_2 \in V$ at least one of the statements
$v_1 \succcurlyeq v_2$ or $v_2 \succcurlyeq v_1$ holds. We remark that
equality (denoted by $\asymp$) is permitted in our definition of a full order.
Any map $\ell:V\to\RR$ assigning real numbers to vertices of $\Gamma$ defines a full
order on $\Gamma$ by setting $v_1 \succcurlyeq v_2$ if and only if $\ell(v_1) \geq \ell(v_2)$.
We call a dual graph $\Gamma$ equipped with a full order on its vertices a {\em level graph},
and denote it by~$\overline{\Gamma}$. Given a level function $\ell$ and
a real number $L$, we denote by $\overline\Gamma_{>L}$ (resp.~$\overline\Gamma_{\ge L}$, $\overline\Gamma_{=L}$, etc)
the subgraph of~$\Gamma$ consisting of all vertices such that $\ell(v)>L$ (resp.~$\ell(v)\ge L$, $\ell(v)=L$, etc)
and all the edges connecting them.
\par
In a level graph, an edge is called {\em horizontal} if it joins two vertices
at the same level, and called {\em vertical} otherwise. We also use the same names
for the corresponding nodes of the curve. For a node $q$ joining two components $X_{v_1}$ and $X_{v_2}$ with
$v_1 \succcurlyeq v_2$, we denote by $q^+$ and $q^-$ the preimages of $q$ in $X_{v_1}$ and $X_{v_2}$, respectively. We also denote by $v^+(q) =v_1$ and $v^-(q) = v_2$ in this case.
We will draw level graphs in such a way that the level function is given by projection to the
vertical axis, so that the top level corresponds to the top of the pictured graph.
\par
Along with the compatibility conditions (3) and (4-ab) in Definition~\ref{def:twistedAbType} for twisted abelian differentials, we recall the main result of~\cite{plumb}.
\begin{thm}[\cite{plumb}]
\label{thm:main:plumb1}
A pointed stable {\em abelian} differential $(X,\omega,z_1,\ldots,z_n)$ is contained in the
incidence variety compactification of a
stratum $\proj\omoduli(\mu)$ if and only if the following conditions hold:
\begin{itemize}
\item[(i)] There exists a level graph $\overline\Gamma$ such that its maxima are the irreducible components $X_v$ of $X$ on which~$\omega$ is not identically zero.
\item[(ii)] There exists a twisted abelian differential~$\eta$ of type $\mu$ on~$X$, compatible with $\overline\Gamma$.
\item[(iii)] On every irreducible component $X_v$ where $\omega$ is not identically zero, $\eta_v = \omega|_{X_v}$.
\end{itemize}
\end{thm}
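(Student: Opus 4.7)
I would prove the two directions separately, with necessity proceeding by asymptotic analysis of a degenerating family and sufficiency by an explicit plumbing construction. Throughout I would work over a small disk $\Delta$ with a one-parameter family $(\calX\to\Delta, \omega_t, z_1(t),\ldots,z_n(t))$ of pointed differentials in the stratum, smooth over $\Delta^*$ and with central fiber the prospective boundary point; after pulling back by a suitable ramified cover I may assume all relevant local parameters at nodes are analytic in $t$.

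\textbf{Necessity.} Given $(X,\omega)$ in the IVC realized by such a family, on each irreducible component $X_v$ the scaled differentials $t^{-\ell(v)}\omega_t|_{X_v}$ converge to a non-zero meromorphic differential $\eta_v$ for a unique rational $\ell(v)$ (this is essentially stable reduction applied to the line bundle $\omega_f^{} \otimes \OO(-\sum m_i Z_i)$). These $\ell(v)$ define the level function and so the level graph $\overline\Gamma$; the collection $\{\eta_v\}$ defines a twisted differential of type $\mu$ on $X$. Conditions (0) and (1) of Definition~\ref{df:twdk} are immediate (vanishing as prescribed is forced because zeros/poles of $\omega_t$ sit at the marked sections; matching orders comes from the standard local computation at a degenerating node $xy=t^e$). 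Condition (2) follows because the residue of $\omega_t$ is preserved under smoothing a node. The partial order condition (3) is the reformulation of $\ell(v_1)-\ell(v_2)$ having the correct sign in terms of pole/zero orders at the node. The global residue condition (4-ab) is the key point: for a connected component $Y$ of $X_{>L}$, apply the residue theorem to $t^{-L}\omega_t$ on the nearby smooth fiber restricted to a neighborhood of $Y$ and let $t\to 0$; on the boundary circles of components at level $>L$ the differential is $o(1)$, while on the circles around nodes crossing level $L$ it contributes $\Res_{q_j^-}\eta_{v^-(q_j)}$, forcing the sum to vanish (the hypothesis that $Y$ contains no marked pole is what guarantees there is no competing residue contribution from a pole of $\omega_t$ inside $Y$).

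\textbf{Sufficiency.} This is the harder direction. Given $(X,\omega,\overline\Gamma,\eta)$ satisfying (i)--(iii), I would build an explicit family $(\calX, \omega_t)\to\Delta$ with the given limit. Choose a level function $\ell$ with $\ell(v)=0$ on the top level (where $\eta_v=\omega|_{X_v}$). At each node $q$ joining $v^+ \succcurlyeq v^-$, open a plumbing cylinder with parameter $s_e = t^{\ell(v^+)-\ell(v^-)}$; for a horizontal node use the matching residues from condition (2) to obtain a modification differential on the cylinder. Define a first-order candidate $\omega_t^{(0)}$ by patching $t^{\ell(v)}\eta_v$ on a shrinking neighborhood of each $X_v$ with explicit meromorphic model differentials on the plumbing cylinders. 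The candidate will generally fail to be holomorphic (or of the prescribed divisor class) on the nose; one must correct it by adding, on each level set, a globally defined meromorphic differential with prescribed residues at the nodes crossing that level. The obstruction to solving for such a corrector on the component $Y$ at level $L$ is exactly that the prescribed residues sum to zero, which is precisely condition (4-ab) applied to $Y$. An inductive construction level by level, from bottom to top, then produces a convergent series $\omega_t$ lying in the stratum and realizing $(X,\omega)$ as its $t\to 0$ limit, proving membership in the IVC.

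\textbf{Main obstacle.} The most delicate step is the inductive correction in the sufficiency direction: at each level one needs a meromorphic differential on the (possibly disconnected) partial compactification with prescribed principal parts at finitely many nodes, and the correction must not disturb the analyticity of the already-constructed piece at higher levels. Standard Mittag-Leffler / cohomological arguments give such correctors precisely when the residues sum to zero on each connected component, so condition (4-ab) is exactly what is needed; verifying that the iteration converges uniformly on compact subsets (and hence yields an actual family, not just a formal one) is the technical heart of the argument and mirrors the gluing analysis for abelian differentials in the work of Hubbard--Masur and the subsequent plumbing/smoothing literature.
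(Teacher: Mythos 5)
A preliminary remark: the paper does not actually prove Theorem~\ref{thm:main:plumb1} --- it is imported verbatim from \cite{plumb} and used as a black box. The closest the paper comes to a proof is the argument for its $k$-differential generalization, Theorem~\ref{thm:kmain}, in Section~\ref{sec:lift}, which follows precisely the strategy you describe: necessity by extracting scaling limits $t^{-\ell_v}\omega_t|_{X_v}$ and applying the residue theorem to the appropriately rescaled differential on the subsurface of the nearby fiber degenerating to a connected component $Y$ of $X_{>L}$; sufficiency by plumbing, level by level, with a modification differential whose existence is governed by Mittag-Leffler and hence exactly by the global residue condition. Your necessity half is essentially the right argument (modulo some care about which vanishing cycles dominate at which power of $t$, since a single component $Y$ can meet several distinct lower levels and one equation per level must be extracted from the exact residue identity $\sum_j r_j(t)=0$).

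Your sufficiency half, however, has one genuine gap. After you add the corrector $\phi$ (the modification differential) to $t^{\ell(v)}\eta_v$, a zero of $\eta_v$ of multiplicity $m_i>1$ generically disperses into several zeros of the same total multiplicity, because $\phi$ has no reason to vanish to order $m_i$ there. The family you construct therefore converges to the right boundary point, but its nearby smooth fibers lie in a stratum $\proj\omoduli(\mu')$ for a refinement $\mu'$ of $\mu$, not in $\proj\omoduli(\mu)$; your concluding claim that the series ``lies in the stratum'' is unjustified as written. The actual proof repairs this with a further local surgery --- the ``merging zeros'' lemma \cite[Lemma~4.7]{plumb}, recalled in the proof of Theorem~\ref{thm:kmain} --- which replaces the perturbed differential near each dispersed cluster by one with a single zero of the prescribed multiplicity, without changing it elsewhere. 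Some version of this step is indispensable. A smaller point: the induction in \cite{plumb} attaches $X_{=L}$ to an already-smoothed $X_{>L}$, descending in level, because the corrector must live on the \emph{upper} subcurve with simple poles at the branches $q^+$ going down to level $L$ (this is where condition (4-ab) enters); your phrase ``from bottom to top'' is at odds with your own identification of the obstruction with (4-ab) on components of $X_{>L}$, and a genuinely bottom-up order would place the corrector on the lower part, where the obstruction is the ordinary residue theorem rather than the global residue condition.
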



\section{Normalized covers and the global residue condition}\label{sec:lift}
In this section we prove Theorem~\ref{thm:kmain}, where the compatibility condition uses the \whgrc~($\widehat{4}$), by reducing the problem to the case of abelian differentials (Theorem~\ref{thm:main:plumb1}). This requires extending the notion of
canonical covers, defined in Section~\ref{sec:DefOfMeroDiff} for smooth curves,
to the case of nodal curves. More precisely, given a nodal curve $X$ with a twisted $k$-differential $\eta$,
we want to construct a (possibly disconnected) admissible cover $\pi: \wh X \to X$ such that $\pi^{*}\eta = \wh \omega^k$, where $\wh \omega$ is a twisted abelian differential on $\wh X$.
Here the admissibility means that every node of~$\wh X$ maps to a node of $X$, every preimage of a node of $X$ is a node of $\wh X$, and the ramification orders of $\pi$ at the two branches of any node of~$\wh X$ are equal (see~\cite[Paragraph~3.G]{hamobook}). In addition, we want $\wh X$ to carry a deck transformation $\tau$, cyclic of order $k$ as in the case of smooth canonical covers.
Note that if $q$ is a node of~$X$ identifying $q_1\in X_{v_1}$ with $q_2\in X_{v_2}$ and if $\eta$ is a twisted
$k$-differential on~$X$, then~$q_i$ has
$\gcd(k,\ord_{q_i}\eta_i)$ distinct preimages in the canonical cover of~$X_{v_i}$.
Thus by the matching orders condition for twisted $k$-differentials (condition (1) in Definition~\ref{df:twdk}), $q_1$ and $q_2$ have the same
number of preimages in the canonical covers $\whX_{v_1}$ and~$\whX_{v_2}$. Hence to construct $\wh X$, we can pair preimages of $q_1$ with preimages of $q_2$ and identify the two points in each pair to form a node. In general there are many choices, but since $\wh X$ is required to have a cyclic deck transformation $\tau$, we only need to identify one pair of the preimages of $q_1$ and $q_2$, and the remaining identifications are then determined from the induced actions of $\tau$ on $\whX_{v_1}$ and~$\whX_{v_2}$.
\par
To give the precise definition of the covers we want, fix a type $\mu=(m_1,\ldots,m_n)$ of $k$-differentials. We let $\wh m_i := \tfrac{k+m_{i}}{\gcd(k,m_{i})}-1$
and let
$$
  \whmu :=\Bigl(\underbrace{\wh m_1, \ldots, \wh m_1}_{\gcd(k,m_{1})},\,\underbrace{\wh m_2,
  \ldots, \wh m_2}_{\gcd(k,m_{2})} ,\ldots,\,
  \underbrace{\wh m_n, \ldots, \wh m_n}_{\gcd(k,m_{n})} \Bigr).
$$
Given a cover $\pi: \whX \to X$ of a pointed stable curve $X$, we mark on $\whX$ all
preimages of all the marked points in $X$. The order of the marked preimages in $\whX$ will not matter,
except that we list the preimages of the first marked point first, etc.
\par
Given a stable curve~$X$ together with a twisted $k$-differential~$\eta$,
consider a triple $$(\pi: \whX \to X, \tau, \whomega)$$
consisting of an admissible cover $\pi$ of degree $k$, its deck transformation~$\tau$ of order $k$, and a twisted abelian differential $\whomega$ on $\whX$ such that
$\tau^* \whomega = \zeta \whomega$ and $\whomega^k = \pi^* \eta$ on~$\whX$, where $\zeta$ is the chosen
primitive $k$-th root of unity. We will still call such triple a {\em normalized cover} (of a nodal curve), and note that its
restriction to any irreducible component~$X_v$ of~$X$ is the normalized canonical
cover as defined in Section~\ref{sec:DefOfMeroDiff}.
\par
We will often abbreviate notation and say that $(\whX, \wh{\omega})$ or $\whX$ is a normalized cover of~$X$.
Note that if such a normalized cover exists, then
$\whmu$ is the only type $\whomega$ can possibly
have, since the preimage of a marked point~$z$ with
$\ord_{z} \eta = m$ consists of $\gcd(k,m)$ distinct points at which $\whomega$ has equal order. We now show that  normalized covers always exist.
\par
\begin{lm}\label{lemma:existenceLift}
For any twisted $k$-differential $\eta$ on a pointed stable curve $X$, there exists a
normalized cover  $(\whX, \tau, \whomega)$ such that $\whomega$ is a twisted
abelian differential of type $\wh\mu$.
\end{lm}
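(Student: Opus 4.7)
The plan is to construct the cover by applying the canonical cover of Proposition~\ref{prop:cancov} to each irreducible component of $X$ and then gluing the resulting smooth covers equivariantly at the nodes of $X$. Concretely, for each irreducible component $X_v$, the $k$-differential $\eta_v$ gives a cyclic degree~$k$ canonical cover $\pi_v\colon \whX_v\to X_v$, an abelian differential $\omega_v$ on $\whX_v$ with $\omega_v^k=\pi_v^{*}\eta_v$, and a deck transformation $\tau_v$ of order $k$ satisfying $\tau_v^{*}\omega_v=\zeta\omega_v$. The task is then to build an admissible cover $\whX$ from the $\whX_v$ and to glue the $\tau_v$ and $\omega_v$ into global data $(\tau,\whomega)$.

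At a node $q$ of $X$ identifying $q_1\in X_{v_1}$ with $q_2\in X_{v_2}$, set $m_i=\ord_{q_i}\eta_{v_i}$. The matching order condition (1) gives $m_1+m_2=-2k$, from which $\gcd(k,m_1)=\gcd(k,m_2)=:r_q$, so each $q_i$ has exactly $r_q$ preimages in $\whX_{v_i}$ with common ramification index $k/r_q$; this automatically guarantees admissibility of the glued cover. Moreover, by the divisor formula of Proposition~\ref{prop:cancov}, the order of $\omega_{v_i}$ at each preimage of $q_i$ equals $(k+m_i)/r_q-1$, and summing gives $-2$, which is the matching order condition for the abelian differential across the node-to-be. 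Next, label the preimages of $q_i$ cyclically by the residual $\tau_{v_i}$-action and pair them as $\wh q_1^{(a)}\leftrightarrow\wh q_2^{(a+c_q)}$ for some offset $c_q\in\ZZ/r_q$. The translation form of the pairing is exactly what is required so that the local deck transformations $\tau_{v_1}$ and $\tau_{v_2}$ match across the node and thus patch to a global order-$k$ deck transformation $\tau$ of $\whX$.

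The choice of the offset $c_q$ must be made so that $\omega_{v_1}$ and $\omega_{v_2}$ glue to a twisted abelian differential, i.e., they satisfy the residue matching condition at every simple pole formed at a new node of $\whX$. This is where condition (2) of Definition~\ref{df:twdk} enters: the preimages carry a simple pole of $\omega_{v_i}$ only when $m_i=-k$, in which case necessarily $m_1=m_2=-k$ and $r_q=k$, and the residues of $\omega_{v_i}$ at the $k$ preimages are the $k$ distinct $k$-th roots of the $k$-residue $\Resk_{q_i}\eta_{v_i}$. The equality $\Resk_{q_1}\eta_{v_1}=(-1)^k\Resk_{q_2}\eta_{v_2}$ lets one choose compatible $k$-th roots so that a unique offset $c_q$ makes $\Res\omega_{v_1}+\Res\omega_{v_2}=0$ at every glued simple pole; for all other nodes the offset $c_q$ can be chosen arbitrarily since no residue condition is imposed.

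I expect the main obstacle to be this $k$-th root bookkeeping in the residue case $m_1=m_2=-k$; once the correct offsets are fixed at each such node, the remaining verifications (admissibility, existence of global $\tau$, and the identity $\pi^{*}\eta=\whomega^k$) follow formally from the local construction on each $\whX_v$, while the type of $\whomega$ is $\whmu$ by construction because each preimage of a marked point $z_i$ in $X$ carries $\omega_v$ of order $\wh m_i=(k+m_i)/\gcd(k,m_i)-1$ by Proposition~\ref{prop:cancov}.
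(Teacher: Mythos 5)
Your proposal is correct and follows essentially the same route as the paper's proof: take the canonical (normalized) cover of each irreducible component, observe that the matching-order condition for $\whomega$ at the lifted nodes holds automatically and independently of the gluing, glue preimages of vertical nodes by an arbitrary $\tau$-equivariant pairing, and use condition (2) of Definition~\ref{df:twdk} to pick the unique pairing of preimages with opposite residues at horizontal nodes. The only cosmetic difference is your explicit ``offset $c_q\in\ZZ/r_q$'' bookkeeping, which the paper phrases as choosing one identified pair and letting the $\tau$-action determine the rest.
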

\par
\begin{proof}
We first take normalized covers $\whX_v\to X_v$ of all irreducible components of $X$, with meromorphic abelian differentials $\whomega_v$ on $\whX_v$. To construct $\whX$, it remains to identify the preimages of two branches of each node of $X$ in such a way that $\whomega$ satisfies the conditions for a twisted abelian differential on $\whX$. By the construction of normalized covers for smooth curves, $\whomega$ has prescribed vanishing, that is, it satisfies condition (0) in Definition~\ref{df:twdk} for twisted abelian differentials (i.e., for $k=1$). To verify the matching orders condition (1), note that if $\ord_{q^{\pm}} \eta = -k\pm m$ at a node~$q$, then for any preimage $\wh{q}_i$ of $q$
$$
  \ord_{\whq^{\pm}_i} \whomega \= \frac{k + (-k \pm m) }{\gcd (k,m)} - 1 \= \frac{\pm m}{\gcd(k,m)}-1,
$$
hence $\ord_{\whq^{+}_i} \whomega + \ord_{\whq^{-}_i} \whomega = -2$, and consequently condition (1) holds, independent of how the lifted branches of the nodes are identified.
\par
If $q$ is a vertical node of $X$, then there is no matching residue condition imposed at its preimages, and the only restriction is that the curve would have an order $k$ automorphism $\tau$. Hence one can choose any one preimage of $q^+$ and identify it with any one preimage of $q^-$ to form a node, and then the identifications of the rest of the preimages of $q^{\pm}$ are determined by the group action.
\par
Finally, if $q_1\in X_{1}$ and $q_2 \in X_2$ are identified to form a horizontal node $q \in X$, i.e., if $\ord_{q_1}\eta_1=\ord_{q_2}\eta_2=-k$, then the matching residue condition (2) for twisted $k$-differentials requires
$\Resk_{q_1}\eta_1=(-1)^k\Resk_{q_2}\eta_2$. Note that in this case these $k$-residues are non-zero, as they are given by the coefficients of $z^{-k}(dz)^k$ in the local expression of $\eta$. The  canonical covers of $X_i$ are then unramified over $q_i$, and the residues of $\whomega$ at the preimages of $q_i$ are the $k$-th roots of $\Resk_{q_i}\eta_i$, which differ from each other by multiplication by $k$-th roots of unity. Hence for each preimage of $q_1$, there exists a unique preimage of $q_2$ where $\whomega$ has opposite residue. Identifying these two preimages with opposite residues ensures that $\whomega$ satisfies the matching residue condition (2) for twisted abelian differentials (and is the unique choice for such an identification).
\end{proof}
\par
Recall that every full order on a dual graph is induced by some level function. Let $\overline\Gamma$ be a full order on the dual graph $\Gamma$ of $X$, which is given by the level function $\ell:V(\Gamma)\to\RR$. For any admissible cover $\pi:\wh X\to X$, we define
the {\em lifted full order} (or {\em lifted level graph}) $\wh{\overline\Gamma}$ on the dual graph $\wh{\Gamma}$ of
$\wh{X}$ to be the full order given by the level function $\ell\circ\pi$ on $V(\wh \Gamma)$.
It is easy to see that the lifted full order only depends on $\overline\Gamma$ and not on the choice of the level function.
\par
\begin{lm}\label{lemma:liftcompatibleorder}
Let $\eta$ be a twisted $k$-differential on a pointed stable curve $X$ satisfying the
partial order condition $(3)$ for a level graph $\overline\Gamma$. Then
for every normalized cover $(\pi: \whX \to X,\tau, \whomega)$, the twisted abelian differential
$\whomega$ also satisfies the partial order condition $(3)$ for the
lifted level graph~$\wh{\overline\Gamma}$.
\par
Moreover for any level $L$, the restriction of the normalized cover over $X_{=L}$ is uniquely determined by $\eta$, and the entire cover is further determined by choosing, for each vertical node $q$ of~$\overline\Gamma$, one $\pi$-preimage of~$q^+$ and one $\pi$-preimage of $q^-$ that are identified.
\end{lm}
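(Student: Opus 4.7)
The plan is to derive both assertions from the local formula for the order of $\wh\omega$ at a preimage of a node, which was already recorded in the proof of Lemma~\ref{lemma:existenceLift}.

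For the partial order claim, write $\ord_{q^+}\eta = -k+m$ at a node $q$; condition (3) for $\eta$ translates into $v^+(q)\succcurlyeq v^-(q)$ iff $m\ge 0$, and $v^+(q)\asymp v^-(q)$ iff $m=0$. At any preimage we have $\ord_{\wh q^+}\wh\omega = m/\gcd(k,m)-1$, which is $\ge -1$ precisely when $m\ge 0$ and equals $-1$ precisely when $m=0$. Since the lifted level function is $\ell\circ\pi$ by definition, $\wh v_1\succcurlyeq\wh v_2$ in $\wh{\overline\Gamma}$ is equivalent to $v_1\succcurlyeq v_2$ in $\overline\Gamma$, and similarly for $\asymp$; combining these equivalences yields condition (3) for $\wh\omega$ on $\wh{\overline\Gamma}$.

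For the uniqueness over $X_{=L}$, every node inside $X_{=L}$ is horizontal, so condition (3) forces $\ord_{q^\pm}\eta=-k$ and the matching residue condition (2) of Definition~\ref{df:twdk} pins down the non-zero $k$-residues of $\eta$ on both branches up to the sign $(-1)^k$. By Proposition~\ref{prop:cancov} the canonical cover of each $X_v$ with $\ell(v)=L$ is unique as a curve, and the horizontal-node argument in the proof of Lemma~\ref{lemma:existenceLift} shows that the matching-residue requirement for $\wh\omega$ picks out a unique pairing of the preimages of $q^+$ with those of $q^-$. Thus $(\pi,\wh\omega)$ restricted over $X_{=L}$ is canonically fixed by $\eta$.

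At a vertical node $q$, by contrast, $\ord_{q^\pm}\eta\ne -k$ implies that all $k$-residues of $\wh\omega$ at preimages of $q$ vanish, so no residue condition restricts the identification of branches. Choosing one pair $\wh q^+_i \sim \wh q^-_j$ is therefore admissible, and $\tau$-equivariance then forces $\tau^s(\wh q^+_i)\sim\tau^s(\wh q^-_j)$ for every $s$, uniquely determining all remaining identifications in the fibre over $q$. The one subtlety, which I expect to be the main point to verify, is the consistency of this propagation; it follows because the two fibres are $\tau$-orbits of the same size $\gcd(k,\ord_{q^+}\eta) = \gcd(k,\ord_{q^-}\eta)$ (equal by the matching orders condition, since $\gcd(k,-m)=\gcd(k,m)$), so a single choice determines a well-defined $\tau$-equivariant bijection between them.
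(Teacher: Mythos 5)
Your argument follows the same route as the paper's: condition (3) for $\wh{\overline\Gamma}$ is read off from the order formula $\ord_{\wh q^{\pm}}\whomega = \pm m/\gcd(k,m)-1$ together with the fact that the lifted order is induced by $\ell\circ\pi$, and the two uniqueness statements reduce to the horizontal/vertical node analysis of the proof of Lemma~\ref{lemma:existenceLift}, which is precisely what the paper cites. One intermediate assertion in your vertical-node paragraph is false, although the conclusion you draw from it is correct. You claim that $\ord_{q^{\pm}}\eta\ne -k$ forces all residues of $\whomega$ at the preimages of $q$ to vanish. That fails whenever $k\mid\ord_{q^-}\eta$ and $\Resk_{q^-}\eta\ne 0$: in that case $\whomega$ has a pole of order $\ord_{q^-}\eta/k\le -2$ at each preimage of $q^-$ whose residue is a $k$-th root of $\Resk_{q^-}\eta$ (this is exactly the nonvanishing residue exploited in case v) of Definition~\ref{def:GRCk}; only when $k\nmid\ord_{q^-}\eta$ does Proposition~\ref{prop:standard_coordinates} kill the residues). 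The correct, and equally short, reason that no residue condition constrains the gluing at a vertical node is that the matching residue condition (2) for twisted abelian differentials is only imposed at nodes where \emph{both} branches carry a simple pole, and at a vertical node the upper branch satisfies $\ord_{\wh q^+}\whomega\ge 0$. With that one repair your proof is complete; the consistency check for the $\tau$-equivariant propagation, via the equality $\gcd(k,\ord_{q^+}\eta)=\gcd(k,\ord_{q^-}\eta)$ of fibre cardinalities, is the same point the paper relies on implicitly.
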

\par
\begin{proof}
 For any node $q$ of $X$, we have $\ord_{q^\pm}\eta \geq -k$ if and only if $\ord_{\wh q_i^\pm}\whomega \geq -1$ for any
 preimage $q_i$ of $q$. It follows that $\whomega$ satisfies the partial order condition (3) for $\wh{\overline\Gamma}$.
The uniqueness of identifying preimages of a horizontal node on level $L$ and the way of identifying preimages of a vertical node were shown in the proof of Lemma~\ref{lemma:existenceLift} already.
\end{proof}
\par
Before proving Theorem~\ref{thm:kmain}, we remark that condition ($\widehat{4}$) depends on the
choice of a normalized cover, as shown by the following example.
\par
\begin{exa}[The \whgrc~depends on the normalized cover]
\label{ex:GRCdependLift}
Let $X$ be a stable curve consisting of three irreducible components $X_{1}$, $X_{2}$, and $X_{3}$ such that $X_{1}$ meets $X_{2}$ at two nodes and $X_{2}$ meets $X_{3}$ at one node (see Figure~\ref{cap:GRC}). Let $\eta$ be a twisted quadratic differential (i.e., $k=2$) such that~$\eta_{1}$ and~$\eta_{2}$ are squares of abelian differentials and $\eta_{3}$ is a primitive quadratic differential. Moreover, suppose $\eta_{2}$ is the square of a meromorphic abelian differential that has poles at the nodes of $X_{1}$ without residue. Let us consider the two normalized covers $\whX^{(1)}$ and $\whX^{(2)}$ with lifted level graphs shown in Figure~\ref{cap:GRC}.
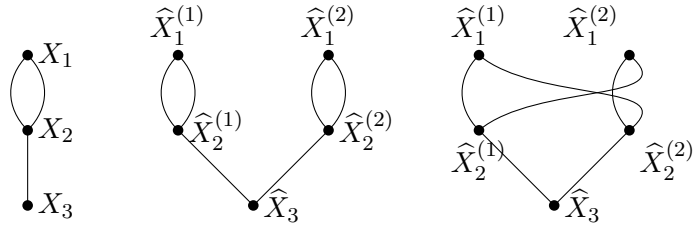
\begin{figure}[ht]
\centering
\begin{tikzpicture}[scale=1]
\fill (0,0) coordinate (x1) circle (2pt); \node [right] at (x1) {$X_{1}$};
\fill (0,-1) coordinate (x2) circle (2pt); \node [right] at (x2) {$X_{2}$};
\fill (0,-2) coordinate (x3) circle (2pt); \node [right] at (x3) {$X_{3}$};
 \draw (x1) ..controls (.3,-.3) and (.3,-.7)  ..  (x2);
  \draw (x1) ..controls (-.3,-.3) and (-.3,-.7)   ..  (x2);
 \draw (x2) -- (x3);
  \begin{scope}[xshift=3cm]
  \fill (-1,0) coordinate (x11) circle (2pt); \node [above] at (x11) {$\whX_{1}^{(1)}$};
\fill (-1,-1) coordinate (x21) circle (2pt); \node [right] at (x21) {$\whX_{2}^{(1)}$};
 \fill (1,0) coordinate (x12) circle (2pt); \node [above] at (x12) {$\whX_{1}^{(2)}$};
\fill (1,-1) coordinate (x22) circle (2pt); \node [right] at (x22) {$\whX_{2}^{(2)}$};
\fill (0,-2) coordinate (x3) circle (2pt); \node [right] at (x3) {$\whX_{3}$};
 \draw (x11) ..controls (-.7,-.3) and (-.7,-.7)  ..  (x21);
  \draw (x11) ..controls (-1.3,-.3) and (-1.3,-.7)   ..  (x21);
  \draw (x12) ..controls (.7,-.3) and (.7,-.7)  ..  (x22);
  \draw (x12) ..controls (1.3,-.3) and (1.3,-.7)   ..  (x22);
 \draw (x21) -- (x3);
  \draw (x22) -- (x3);
 \end{scope}
 \begin{scope}[xshift=7cm]
 \fill (-1,0) coordinate (x11) circle (2pt); \node [above] at (x11) {$\whX_{1}^{(1)}$};
\fill (-1,-1) coordinate (x21) circle (2pt); \node [below] at (x21) {$\whX_{2}^{(1)}$};
 \fill (1,0) coordinate (x12) circle (2pt); \node [above left] at (x12) {$\whX_{1}^{(2)}$};
\fill (1,-1) coordinate (x22) circle (2pt); \node [below right] at (x22) {$\whX_{2}^{(2)}$};
\fill (0,-2) coordinate (x3) circle (2pt); \node [right] at (x3) {$\whX_{3}$};
  \draw (x11) ..controls (-1.3,-.3) and (-1.3,-.7)  ..  (x21);
 \draw (x11) ..controls +(-40:1cm) and +(40:1cm)   ..   (x22);
  \draw (x12) ..controls (.7,-.3) and (.7,-.7)  ..  (x22);
 \draw (x12) ..controls +(-40:1cm) and +(40:1cm)   ..   (x21);
 \draw (x21) -- (x3);
  \draw (x22) -- (x3);
 \end{scope}
\end{tikzpicture}
 \caption{The level graphs $\overline{\Gamma}$, $\wh{\overline{\Gamma}}_{1}$, and $\wh{\overline{\Gamma}}_{2}$} \label{cap:GRC}
\end{figure}
\par
The \whgrc\ ($\widehat{4}$) for $\wh{\overline{\Gamma}}_1$ says that the
residue of $\whomega_{3}$ at each of the two nodes of $\wh{X}_3$ is zero. Since the sum of these two residues is always zero by the residue theorem for $\whomega_3$ on $\whX_3$, and the square of either residue of $\whomega_3$ is equal to the $2$-residue of $\eta_3$ at the node of $X_3$, it implies that condition ($\wh{4}$) for $\wh{\overline{\Gamma}}_1$ is equivalent to imposing $\Res^2\eta_3=0$. On the other hand, the \whgrc\ ($\widehat{4}$) for $\wh{\overline{\Gamma}}_2$ requires only that the sum of the two residues of $\whomega_3$ at the two nodes of $\wh{X}_3$ is equal to zero, which holds automatically by the residue theorem; the residues of $\whomega_3$ for this normalized cover may be non-zero, differing from each other by a sign.
\end{exa}
The rest of this section is devoted to the proof of Theorem~\ref{thm:kmain}.
We first  prove that the conditions are necessary. Then we develop an
equivariant version of plumbing techniques used in~\cite{plumb} to show that the conditions are also sufficient.
\par
\begin{proof}[Proof of Theorem~\ref{thm:kmain}, conditions are
necessary] We need to show that if a pointed
stable $k$-differential lies in the incidence variety compactification of a stratum, then it is associated
to some twisted $k$-differential satisfying conditions~(0)-(3) and~($\widehat{4}$).
Except for condition ($\widehat{4}$), the argument is similar to the case
of abelian differentials (see~\cite{plumb} for more details). We start
with a family of curves $f:\calX\to\Delta$ over a disk that degenerates to the central fiber~$X$,
with a family~$\xi_t$ of $k$-differentials of type $\mu$ for $t\in \Delta^{*}$, and let $Z_i$ be the
sections of marked zeros and poles.
Then for each irreducible component $X_v$ of $X$ there exists a scaling parameter $\ell_v\in\ZZ$ (in the sense of~\cite[Lemma~4.1]{plumb}) such that $\sum m_i Z_i$
is a section of the twisted relative $k$-dualizing sheaf $\omega_{f}^{k}
(\sum \ell_v X_v)$. For later use we make a degree $k$ base change such that $k\mid \ell_v$ for all $\ell_v$. Now the same proof as in the case $k=1$ leads to the necessary conditions (0), (1), and (2).
Hence we have constructed
a twisted $k$-differential $(X,\eta)$, where $ \eta_v := \lim_{t\to 0} t^{\ell_v}
\xi_t|_{X_v}.$ Moreover, the set of values $\{\ell_v\}$ defines a level graph $\overline\Gamma$, which
ensures that the partial order condition~(3) holds, again with the same
proof as for $k=1$.
\par
We now prove the necessity of condition~($\widehat{4}$). For $t \in \Delta^*$,
let $(\pi_t: \wh X_{t} \to X_t, \whomega_{t})$ be the family of normalized covers
associated with the family of $k$-differentials~$\xi_t$. It is well-known that the Hurwitz space of connected admissible covers is proper (see \cite[Paragraph~4]{hamu}). Since
a branched cover of degree $d$ is connected if and only if the associated monodromy group in the symmetric group $S_d$ acts transitively on the $d$ sheets of the cover, and the transitivity condition was not really used in the argument of properness, hence the Hurwitz space of possibly disconnected admissible covers is also proper.\footnote{Following a referee's suggestion, here is a more precise argument for the properness of the Hurwitz space of possibly disconnected admissible covers. Using the same notation, after a suitable base change for the family $\pi_t$ we may assume that the new family has no monodromy around $t = 0$ in the sense that the total space has exactly as many connected components as any of the special fibers. Then we can treat each component separately and apply the usual properness of the connected case. In particular, these components all extend uniquely to $t = 0$ after a possible further base change.}
It follows that the family $\pi_t$ extends over $t=0$ to some admissible
cover $\pi: \whX \to X$, and the order $k$ deck transformation $\tau_t$ of $\pi_t$ extends to a deck transformation $\tau$ of order $k$ for the cover $\pi$ in the central fiber. Then the scaling parameters for the family $\wh X_{t}$ of abelian differentials are equal to $\ell_{\pi(\whX_v)}/k$. It implies that the twisted abelian differential $\whomega$ obtained as the scaling limit of $\whomega_t$ is compatible with the lifted level graph
$\wh{\overline{\Gamma}}$. By the proof of necessity for abelian differentials given in~\cite[Section~4.1]{plumb}, we conclude
that~$\whomega$ satisfies the \whgrc\ imposed by $\wh{\overline{\Gamma}}$. To show that
$(\whX,\whomega,\tau)$ is a normalized cover of $(X,\eta)$, the only remaining properties
we need to justify are that $\tau^{\ast}\whomega=\zeta \whomega$
and that $\wh\omega^k = \pi^*\eta$. It suffices to check them on each irreducible component $X_{v}$ of $X$. The first property follows from taking the limit of the equations
$$\tau_{t}^{\ast}(t^{\ell_{v}/k}\wh\omega_{t})=t^{\ell_{v}/k}\tau_{t}^{\ast}(\wh\omega_{t})=t^{\ell_{v}/k}\zeta\wh\omega_{t} = \zeta (t^{\ell_{v}/k}\wh\omega_{t})$$
as $t$ goes to zero. The second property follows similarly from the fact that $\left(t^{\ell_{v}/k}\wh\omega_{t}\right)^{k}=t^{\ell_{v}}\pi_{t}^{\ast}(\xi_{t})$.
\end{proof}
The proof of sufficiency proceeds by passing to normalized covers and applying the
sufficiency for abelian differentials as in \cite{plumb}. The crucial ingredients of the proof for abelian differentials are
the existence of a modification differential and the procedure of merging nearby
zeros, which we now need to perform on an admissible cover in a way that is
equivariant under the action of the automorphism $\tau$.
\begin{lm}[Existence of equivariant modification differentials]\label{lm:moddiff}
Suppose that $\wh{Y}$ is a Riemann surface admitting an automorphism~$\tau$ of order $\ell>1$. For any $\tau$-invariant collection $\calQ$ of
points in~$\wh{Y}$
and any function~$r: \calQ \to \CC$ with $\tau^*r = \zeta  r$, where $\zeta$ is a primitive $\ell$-th root of unity,
 there exists
an abelian differential $\phi$ on~$\wh{Y}$ satisfying
the following properties:
\begin{itemize}
 \item[(i)]  The poles of $\phi$ are simple and contained in $\calQ$.
  \item[(ii)] The residue of $\phi$ at $q\in\calQ$ is $r(q)$.
 \item[(iii)]  The equivariance $\tau^{\ast}\phi=\zeta \phi$ holds.
\end{itemize}
\end{lm}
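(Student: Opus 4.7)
The strategy is the standard one for producing eigen-objects under a finite cyclic group action: first construct a candidate differential ignoring equivariance, then project onto the $\zeta$-eigenspace by averaging. The only subtlety is to certify that the existence step is not obstructed by the residue theorem on each connected component of $\wh Y$.

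First, I would check the compatibility condition forced by the residue theorem: for every connected component $\wh Y_\alpha$ of $\wh Y$,
\[
\sum_{q \in \calQ \cap \wh Y_\alpha} r(q) \= 0.
\]
The hypothesis $\tau^* r = \zeta r$ (i.e.\ $r(\tau q) = \zeta r(q)$) together with the $\tau$-invariance of $\calQ$ makes this automatic. Decompose $\calQ$ into $\langle\tau\rangle$-orbits. On an orbit $\calO$ of length $m$, applying $\tau^m = \mathrm{id}$ gives $r(q) = \zeta^m r(q)$, so either $r|_\calO \equiv 0$ or $\zeta^m = 1$. If $\tau$ stabilizes $\wh Y_\alpha$ and the orbit of $q$ in $\wh Y_\alpha$ under $\tau$ has full length $\ell$, the residues on the orbit form the geometric progression $r(q), \zeta r(q),\ldots,\zeta^{\ell-1} r(q)$, summing to $r(q)\cdot\frac{\zeta^\ell - 1}{\zeta - 1} = 0$. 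When $\tau$ permutes components cyclically with stabilizer $\langle \tau^d \rangle$ of $\wh Y_\alpha$, the same calculation applied to the $\tau^d$-orbits on $\calQ\cap\wh Y_\alpha$ gives a sum of $(\ell/d)$-th roots of unity, which again vanishes. Thus the residue theorem obstruction vanishes componentwise.

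Second, by the standard existence result for meromorphic abelian differentials on a compact Riemann surface with prescribed simple poles and residues summing to zero (Mittag-Leffler / Serre duality), there exists a meromorphic abelian differential $\phi_0$ on $\wh Y$ with simple poles contained in $\calQ$ and residues $\Res_{q}\phi_0 = r(q)$ for every $q \in \calQ$.

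Third, define
\[
\phi \:= \frac{1}{\ell}\sum_{j=0}^{\ell - 1}\zeta^{-j}\,(\tau^*)^{j}\phi_0.
\]
Since $(\tau^*)^\ell = \mathrm{id}$, reindexing the sum shows $\tau^*\phi = \zeta\phi$, giving (iii). Each $(\tau^*)^{j}\phi_0$ has simple poles contained in $\tau^{-j}(\calQ) = \calQ$, establishing (i). Finally, because $\tau$ is unramified, the residue of $(\tau^*)^{j}\phi_0$ at $q\in\calQ$ equals $\Res_{\tau^{j}(q)}\phi_0 = r(\tau^j q) = \zeta^{j}r(q)$, so the residue of $\phi$ at $q$ is
\[
\frac{1}{\ell}\sum_{j=0}^{\ell-1}\zeta^{-j}\cdot\zeta^{j}r(q) \= r(q),
\]
which is (ii).

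The construction itself is clean; the only place requiring care is the residue-theorem bookkeeping when $\tau$ does not preserve the components of $\wh Y$, and even there the argument reduces to summing a complete orbit of $\zeta^d$ under the cyclic group $\langle\zeta^d\rangle$ of order $\ell/d$.
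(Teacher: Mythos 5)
Your proof is correct and follows essentially the same route as the paper's: an orbit decomposition to verify that the residue sum vanishes, Mittag--Leffler to produce $\phi_0$, and the twisted average $\phi=\tfrac1\ell\sum_{j}\zeta^{-j}(\tau^j)^*\phi_0$ with the same residue computation for (ii). The only divergence is your componentwise bookkeeping for the residue theorem; be aware that in the case you describe as ``stabilizer $\langle\tau^d\rangle$'' with $d=\ell$ (a component permuted freely by $\tau$) the sum of residues on that component need \emph{not} vanish and the statement would actually fail there, but this is moot because $\wh{Y}$ is taken to be connected (and the paper only ever invokes the lemma one connected component at a time, with $\tau^s$ acting on it).
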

\par
\begin{proof}
Decomposing $\calQ$ into $\tau$-orbits and using $\sum_{i=1}^\ell \zeta^i = 0$
since $\ell>1$, we find that $\sum_{q \in \calQ} r(q) = 0$. Consequently, there is no
restriction imposed by the residue theorem, and hence by Mittag-Leffler theorem, there exists  a meromorphic abelian differential $\phi_0$ on~$\wh{Y}$ satisfying (i) and~(ii). To obtain the equivariance required in (iii), we simply average while twisting
by~$\zeta$, i.e., we let
$$\phi:=\frac{1}{\ell} \sum_{i=0}^{\ell-1}\zeta^{-i}(\tau^i)^*\phi_0.$$
Then $\phi$ satisfies (i) and (iii) by construction. Moreover, since $\tau^*r =
\zeta r$ we have
$$\Res_q(\zeta^{-1}\tau^*\phi_0)=\zeta^{-1}\Res_{\tau(q)}\phi_0
=\zeta^{-1}\zeta\Res_q \phi_0,$$
which implies that $\Res_q \phi=\Res_q \phi_0 = r(q)$. Hence $\phi$ also satisfies (ii).
\end{proof}
\par
\begin{proof}[Proof of Theorem~\ref{thm:kmain}, conditions are
sufficient] Our goal is to show that given a twisted $k$-differential $\eta$ and a
normalized cover $(\whX,\whomega)$ satisfying conditions (0)--(3)
and~($\widehat{4}$), there exists a family $f: \calX \to \Delta$ of
pointed stable curves, such that for $t\ne 0$, the curve $X_t$ is smooth and carries a
$k$-differential~$\xi_t$ of type~$\mu$, which converges to the pointed stable
$k$-differential~$\xi$ on~$X$ associated to $\eta$. The proof will moreover show
that $\eta$ is the scaling limit of $\xi$, in the sense of~\cite[Lemma~4.1]{plumb}
and of the proof of necessity above.
\par
By condition~($\widehat{4}$) and the main result of \cite{plumb}, the stable differential associated to the normalized cover $(\whX,\whomega)$ lies in the boundary of the incidence variety compactification
for the stratum of abelian differentials of type $\wh\mu$ (and if $\whX$ is disconnected we then apply \cite{plumb} to each connected component of $\whX$ separately). Hence there exists a family $\wh{f}:
\wh\calX \to \Delta$ of pointed stable curves, such that for $t\ne 0$, the curve $\whX_t$ is
smooth and carries an abelian differential~$\whomega_t$, while the scaling limit
of $\whomega_t$ is equal to $\whomega$. We need to show that the construction of the family
$\wh{f}$ can be carried out in such a way that the deck transformation $\tau$ on the central fiber $\wh{X}$
extends to an automorphism~$\tilde{\tau}$ of~$\wh\calX$ that restricted to each fiber is a deck transformation.
Then the desired family $\calX$ over $\Delta$ can be obtained by taking the quotient of $\whX$ under the extended automorphism~$\tilde{\tau}$, and
$\whomega^k$ on $\whX$ will be the pullback of a twisted $k$-differential from $X$ that satisfies conditions (0)-(3) and ($\wh 4$).
\par
We recall the outline of the proof given in~\cite{plumb} for abelian differentials and highlight the steps that need
modification in the equivariant setting.  The base of the induction, and in general
the final cleanup step, is to smooth all horizontal nodes by classical plumbing.
For this we need to make sure that~\cite[Proposition~4.4]{plumb} applies
equivariantly. Let $q$ be a horizontal node of $X$, and let~$\wh{q}$ be a preimage of $q$ on the normalized cover.
The upshot is that the $\tau$-orbit of~$\wh{q}$ consists of $k$ distinct points, so that we can plumb one of them and then move it around using~$\tau$. More precisely, having fixed a standard coordinate~$u$ for abelian differentials provided
in loc.\ cit., the plumbing (with fixture $\Omega_t =r(t) du/u$)
of the node $\wh{q}$ depends only on the residue~$r(t)$. Hence
we choose this standard coordinate $u$ at $\wh{q}$ and plumb there. Then
at a node $\tau^i(\wh{q})\in\whX$, which is also a preimage of the node $q$, we use $u\circ\tau^i$ as the standard
coordinate and apply $\tau^i$ to the previous plumbing construction, where $\tau$ acts on $\Omega$ by multiplication by $\zeta$. The resulting plumbing of $\whX$ is then $\tau$-equivariant by construction.
\par
The induction step consists of joining a smoothing
$f: \wh\calY \to \Delta$ of the (possibly disconnected) subcurve~$\wh{X}_{>L}$
of components of level greater
than~$L$ to the subcurve $\wh{X}_{=L}$. The differentials
are scaled by the factors of $t^\ell$ for a level function $\ell: V(\wh{\Gamma}) \to \ZZ_{\leq 0}$, which is inductively
constructed so that the differences between the levels are sufficiently divisible.
To apply higher order plumbing \cite[Theorem~4.5]{plumb}, it requires the existence of a modification
differential $\phi$ (denoted by $\xi$ in loc.\ cit.) with the following properties.
For every connected component~$\wh{Y}$ of~$\wh{X}_{>L}$, let $\{\wh{q}_{i,j}\}$ denote the set of nodes of $\wh{Y}$ where~$\wh{Y}$ intersects $\wh{X}_{=L}$, indexed in such a way that $\wh{q}_{i,j}$ is a preimage of the node $q_i$ of $Y$, and such that
$\tau(\wh{q}_{i,j}) = \wh{q}_{i, j+1}$, where the index $j$ is set to be modulo an appropriate
divisor of~$k$. Then the requirement on $\phi$ in loc.\ cit. is that
$\Res_{\wh{q}_{i,j}^+}(\phi) = - \Res_{\wh{q}_{i,j}^-}(\whomega)$. Note that if $k\mid \ord_{q_i}\eta$, then the $\tau$-orbit of $\wh{q}_{i,j}$ has cardinality~$k$. If $k\nmid \ord_{q_i}\eta$, then Proposition~\ref{prop:standard_coordinates} implies that
$\Res_{\wh{q}_{i,j}^-}(\whomega) = 0$. Therefore, if the $\pi$-preimage of a connected component of $X_{>L}$ is a disjoint union of $k$ curves, we can directly apply \cite[Lemma~4.6]{plumb} to obtain the desired modification differential $\phi$. On the other hand, if this preimage has $s<k$ connected components, we can use Lemma~\ref{lm:moddiff} with $\calQ$ given by the set of nodes where $\Res_{\wh{q}_{i,j}^-}(\whomega)\neq 0$, with the function $r(\wh{q}_{i,j}^+)=-\Res_{\wh{q}_{i,j}^-}(\whomega)$, with $\ell=k/s$ and with $\tau^s$ as the automorphism on each connected component, in order to obtain the modification differential $\phi$.
\par
The higher order plumbing for the node $\wh{q}_{i,j}$ is performed on some neighborhoods $\calD^{\pm}$
of $\wh{q}_{i,j}^\pm$, where we plumb $\psi_t^+ = t^c(\whomega_t^{+} + t^b\phi_t)$ to
$\psi_t^- = t^{b+c} \whomega_t^-$ using the plumbing fixture differential $\Omega_t$
given in~\cite[Theorem~4.5]{plumb}. Here $b$ and $c$ are constants depending on
$\ord_{q_{i,j}^+} \whomega$ and on the level function. On the neighborhoods $\tau^{-1}\calD^{\pm}$
of $\wh{q}_{i,j-1}^\pm$, consider the differentials
$\tau^*\psi_t^+ = \zeta \psi_t^+$ and $\tau^*\psi_t^- = \zeta t^{b+c}
\whomega_t^-$, where the equalities are implied by $\tau$-equivariance of $\phi$.
These can be glued using the plumbing fixture differential $\zeta\,\Omega_t$.
\par
More precisely, we simplify the notation by writing $q$ for a node of $X$ and let $m = \ord_{q^{+}}\eta$.
If $k\mid m$, the gluing of plumbing fixtures over $q$ can clearly be done as described above. Suppose $k\nmid m$ and let  $(u,v)$ be local (standard) coordinates around the node $q$ such that $\eta=u^{m}(du)^k$ and $\eta=v^{-m-2k}(dv)^k$. Recall that there are $r=\gcd(m,k)$ preimages of the node $q$ and that the order $\wh m$ of $\whomega$ at the preimages is equal to $\tfrac{m+k}{r}-1$. Consider the families of degenerating cylinders $\left\{(t,u_{j},v_{j})\ |\ u_{j}v_{j}=t^{d}\right\}$ for some $d$, with indices $j = 0, \ldots, r-1$, along with the forms $\Omega_{t}=t^{c}\zeta^{j}u_{j}^{\wh m}du_{j}$ and $\Omega_{t}=t^{b+c}\zeta^{j}v_{j}^{-\wh m-2}dv_{j}$. Define the action $\tau'(t,u_{j},v_{j})=(t, u_{j+1},v_{j+1})$ for $j=0,\dots,r-2$, and $\tau'(t,u_{r-1},v_{r-1})=(t, \zeta_{a}^{-r}u_{0},\zeta_{a}^{r}v_{0})$ for some $k$-th root of unity $\zeta_{a}$ to be determined. The induced action $(\tau')^*$ on $\Omega_{t}$ is clearly multiplication by $\zeta$ for $j\neq0$.  For $j=0$, the action on $\Omega_{t}$ is given by $(\tau')^{\ast}\left(t^c u_{0}^{\wh m}du_{0}\right)=\zeta_{a}^{m+k}\left(t^c u_{r-1}^{\wh m}du_{r-1}\right)$. In order to make it equal to $\zeta \Omega_t$ for $j = r-1$, we choose $\zeta_{a}$ such that $\zeta_{a}^{m+k}=\zeta^{r}$, which has a solution among $k$-th roots of unity since $\gcd(m, k)=r$. Similarly one can check it for the $v$ coordinates. Hence the action $\tau'$ extends $\tau$ on the families of degenerating cylinders. Consequently the family $\wh{f}_{\rm zeros}$
of pointed stable differentials obtained by plumbing admits the desired automorphism~$\tilde{\tau}$ that extends~$\tau$.
\par
The family of differentials $\wh{f}_{\rm zeros}$ does not yet have the
right type~$\wh{\mu}$, because a multiple zero $\wh z$ of~$\whomega_t$ may break into a collection of zeroes, of the same total multiplicity, when a small modification differential $\phi$ is added. In the case of abelian differentials, for each zero $\wh{z}$ of~$\whomega$ in the smooth locus of
$\wh{X}$ this issue is settled by merging zeros~\cite[Lemma~4.7]{plumb}, which is a local operation in a neighborhood $\calD$ of the
center of masses $\wh{x}(t)$ of the zeros that $\wh{z}$ splits to. It consists of replacing
the family of differentials $\psi_t = t^c(\whomega_t^+ + t^b\phi_t)$ by another family of
differentials~$\psi'_t$ with a unique zero of
multiplicity equal to the sum of multiplicities of the dispersed zeros. As in the step of higher order plumbing, we can perform this merging locally near one preimage $\wh{z}$ of a dispersed zero, and then use the property $\tau^* \psi_t = \zeta  \psi_t$ to replace on $\tau^{-1}(\calD)$ the differential $\zeta  \psi_t$ by $\zeta \psi'_t$. This can be done as in the previous paragraph in such a way that $\tau^* \psi'_t = \zeta  \psi'_t$. At the end we obtain a family $\wh{f}$
of pointed stable differentials of the correct type and still with an
automorphism~$\tilde{\tau}$ extending~$\tau$ as desired.
\end{proof}


\section{The global $k$-residue condition}
\label{sec:admtokGRC}

Throughout this section, we fix a nodal curve $X$ with a level graph $\overline\Gamma$ given by a level function~$\ell$. Whenever we take normalized covers of $X$ or of its subcurves, we implicitly consider them endowed with a lifted full order provided by
Lemma~\ref{lemma:liftcompatibleorder}.
\par
Previously we have characterized the incidence variety compactification of the strata of $k$-differentials in terms of the existence of a normalized cover satisfying the \whgrc~($\widehat{4}$). As Example~\ref{ex:GRCdependLift} demonstrates, the choice of a normalized cover is fundamental in the \whgrc~($\widehat{4}$). In this section we investigate the combinatorics of possible choices of  a normalized cover. By going through the various possibilities, eventually we will establish the equivalence of the \whgrc~($\widehat{4}$) and the global $k$-residue condition (4) in Definition~\ref{def:GRCk}.
\par
In order to establish the equivalence between conditions ($\widehat{4}$) and (4), the main tool we use is the following result.
\begin{prop}\label{prop:fewercomponents}
Let $(X,\eta)$ be a twisted $k$-differential.
If the restriction $\eta|_Y$ to a connected component~$Y$ of~$X_{>L}$
has a normalized cover $(\wh{Y}, \tau, \whomega)$ with strictly fewer than $k$ connected components,
then the global residue condition $(4$-{\rm ab}$)$ for abelian differentials in Definition~\ref{def:twistedAbType} imposed by~$\wh{Y}$ on $\whomega$ is automatically satisfied.
\end{prop}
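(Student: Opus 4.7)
The plan is to exploit the equivariance $(\tau^s)^*\wh\omega = \zeta^s\wh\omega$ together with the transitivity of the $\tau$-action on the connected components of $\wh Y$. Since the cyclic group $\langle\tau\rangle$ of order $k$ acts on $\wh Y$ with quotient $Y$ (connected), it acts transitively on the set of connected components of $\wh Y$; hence $s\mid k$, and the stabilizer of each component is the unique subgroup $H=\langle\tau^s\rangle\le\langle\tau\rangle$ of order $k/s$. In particular $\zeta^s$ is a primitive $(k/s)$-th root of unity, so $\zeta^s\neq 1$ (using $s<k$).

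Fix a connected component $\wh Y_j$ of $\wh Y$ on which (4-ab) is imposed, i.e.,~one containing no marked pole (otherwise the condition is vacuous), and let $E$ denote the set of edges of $\wh{\overline\Gamma}$ joining $\wh Y_j$ to $\wh X_{=L}$. The subgroup $H$ stabilizes $\wh Y_j$ and hence permutes $E$; since $\tau^s$ preserves levels, it moreover satisfies $(\tau^s\wh q)^- = \tau^s(\wh q^-)$ for every $\wh q\in E$. The standard identity $\Res_p f^*\omega=\Res_{f(p)}\omega$ for a biholomorphism $f$, applied to $f=\tau^s$, yields the residue transformation rule
\[
\Res_{\tau^s(\wh q^-)}\wh\omega \= \zeta^s\,\Res_{\wh q^-}\wh\omega.
\]

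The main step is then a one-line symmetry argument. Setting $S:=\sum_{\wh q\in E}\Res_{\wh q^-}\wh\omega$ and reindexing via the permutation $\wh q\mapsto\tau^s\wh q$ of $E$ gives
\[
S \= \sum_{\wh q\in E}\Res_{(\tau^s\wh q)^-}\wh\omega \= \sum_{\wh q\in E}\zeta^s\Res_{\wh q^-}\wh\omega \= \zeta^s\,S,
\]
and since $\zeta^s\neq 1$ this forces $S=0$, which is precisely the global residue condition (4-ab) imposed by $\wh Y_j$ at level $L$. The only point that takes some setup is the transitivity of the $\langle\tau\rangle$-action on $\pi_0(\wh Y)$, which follows from the fact that the $\langle\tau\rangle$-quotient of $\wh Y$ is $Y$ (so the number of $\langle\tau\rangle$-orbits on $\pi_0(\wh Y)$ equals $|\pi_0(Y)|=1$); after that, the conclusion is an immediate consequence of the equivariance of $\wh\omega$ and the behavior of residues under pullback by biholomorphisms.
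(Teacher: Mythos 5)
Your proof is correct and follows essentially the same route as the paper: transitivity of the deck group on $\pi_0(\wh Y)$, the stabilizer $\langle\tau^s\rangle$ of a component, and the equivariance $\tau^*\whomega=\zeta\whomega$. Your final step (the reindexing identity $S=\zeta^s S$ with $\zeta^s\neq 1$) is just a streamlined packaging of the paper's observation that the residues sum in orbits proportional to $\sum_j \zeta^{jr}=0$, and it has the mild advantage of handling short orbits of nodes uniformly.
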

\par
\begin{proof}
Let $\wh Y_1$ be one of the $r<k$ connected components of $\wh{Y}$. Denote by $q_{i}$ the nodes connecting~$Y$ to $X_{=L}$, and let $\wh{q}_{i, j}$ be the nodes connecting $\wh Y_1$ to $\whX_{=L}$. Since the cover $\whY\to Y$ is cyclic, the deck group must act transitively on the set of connected components of $\whY$.
It thus follows that $r$ divides $k$, and then that $\tau^{k/r}$ acts on $\wh{Y}_1$, and since $\tau^{\ast}(\whomega)=\zeta \whomega$, the sum of the residues of~$\whomega$ at the nodes $\wh{q}_{i,j}^{\,-}$ is proportional to $\sum_{j=1}^{k/r}\zeta^{jr}$, which is zero because the total sum of $(k/r)$-th roots of unity is zero for $r<k$.
\end{proof}
In order to apply the above proposition, we need to know when the preimage of a connected component of $X_{>L}$ can have fewer than $k$ connected components. The easiest case is described as follows.
\begin{lm}
Suppose the twisted $k$-differential $\eta_v$ on an irreducible component $X_v$ is not the $k$-th power of an abelian differential. For any lower level $L<\ell(v)$, let $Y$ be the connected component of $X_{>L}$ containing $X_{v}$. Then any normalized cover of $(Y,\eta|_Y)$ has strictly fewer than $k$ connected components.
\end{lm}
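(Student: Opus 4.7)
The plan is to prove the contrapositive: if some normalized cover $(\wh Y, \tau, \whomega)$ of $(Y, \eta|_Y)$ has exactly $k$ connected components, then $\eta_v$ must be the $k$-th power of an abelian differential, contradicting the hypothesis.

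First I would recall that $\pi : \wh Y \to Y$ is a degree $k$ cyclic cover whose deck transformation $\tau$ has order $k$. Since $\pi$ is Galois with group $\langle \tau \rangle$ and the quotient $\wh Y / \langle \tau \rangle = Y$ is connected (being a connected component of $X_{>L}$), the group $\langle \tau \rangle$ acts transitively on the set of connected components of $\wh Y$. Writing $r$ for the number of those components, this forces $r \mid k$, and the stabilizer of any one component $\wh Y_1$ in $\langle \tau \rangle$ is the unique subgroup of order $k/r$, namely $\langle \tau^r \rangle$.

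Next, suppose $r = k$. Then the stabilizer of $\wh Y_1$ in $\langle \tau \rangle$ is trivial, and the restriction $\pi|_{\wh Y_1} : \wh Y_1 \to Y$ is an admissible cover of degree $1$. Such a map is automatically an isomorphism: it is generically an isomorphism of reduced nodal curves, and the admissibility assumption forces both branches of each node of $\wh Y_1$ to map to the corresponding node of $Y$ with ramification index $1$, so $\pi|_{\wh Y_1}$ extends biregularly across the singular locus.

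Finally, I would transport the defining equality $\whomega^k = \pi^* \eta|_Y$ through this isomorphism $\wh Y_1 \cong Y$: the restriction $\whomega|_{\wh Y_1}$ corresponds to a twisted abelian differential $\omega_Y$ on $Y$ satisfying $\omega_Y^k = \eta|_Y$. Restricting further to the component $X_v \subset Y$ then expresses $\eta_v$ as $(\omega_Y|_{X_v})^k$, contradicting the hypothesis on $\eta_v$. The only step requiring a little care is the identification of a degree $1$ admissible cover with an isomorphism; the transitivity of $\langle \tau \rangle$ on components, which might at first look like the subtle input, is a formal consequence of $Y$ being connected.
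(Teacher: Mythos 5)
Your proof is correct and rests on the same key fact as the paper's, namely that connectedness of $Y$ forces $\langle\tau\rangle$ to act transitively on the set of connected components of $\wh{Y}$. The paper runs the argument directly --- the restriction of the cover over $X_v$ is the canonical cover of $(X_v,\eta_v)$, which has fewer than $k$ components since $\eta_v$ is not a $k$-th power, and transitivity then bounds the number of components of $\wh{Y}$ by that of $\wh{X}_v$ --- whereas you argue the contrapositive by exhibiting a degree-one section $\wh{Y}_1\cong Y$ and descending $\whomega$ to a global $k$-th root of $\eta|_Y$; these are equivalent repackagings of the same idea.
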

\begin{proof}
Let $(\whY,\tau,\whomega)$ be a normalized cover of $(Y,\eta|_Y)$. Then the restriction $\whX_v\to X_v$ of $\whY\to Y$ is a canonical normalized cover of $(X_v,\eta_v)$. Since $X_v$ is not the $k$-th power of an abelian differential, $\whX_v$ has strictly fewer than $k$ connected components. Since, as above, the deck group of $\whY\to Y$ acts transitively on the set of connected components of $\whY$, it follows that every connected component of $\whY$ contains at least one connected component of $\whX_v$, and hence $\whY$ also has strictly fewer than~$k$ connected components.
\end{proof}
\par
As a result, we show that if either case i) or case ii) in the global $k$-residue condition above holds, then there is no extra residue condition imposed.
\par
\begin{cor}\label{cor:iandii}
Suppose a connected component $Y$ of $X_{>L}$ contains an irreducible component~$X_v$ such that either $\eta_v$ is not the $k$-th power of an abelian differential, or $X_v$ contains a marked pole, i.e., there exists $z_i\in X_v$ with $m_i<0$. Then for any normalized cover of $(X,\eta)$ the global residue condition $(4$-{\rm ab}$)$ for abelian differentials imposed by $\wh Y$ on $\wh\omega$ is automatically satisfied.
\end{cor}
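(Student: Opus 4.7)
The plan is to split the hypothesis into its two alternatives and handle each by reducing to tools already established in the section. In the first alternative, where $\eta_v$ is not the $k$-th power of an abelian differential, fix any normalized cover $\pi\colon\wh{X}\to X$ of $(X,\eta)$. The restriction $\pi^{-1}(Y)\to Y$ is itself a normalized cover of $(Y,\eta|_Y)$, and the preceding lemma applied to $X_v\subset Y$ asserts that this restricted cover has strictly fewer than $k$ connected components. Proposition~\ref{prop:fewercomponents} then immediately yields the claim.

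In the second alternative, a marked point $z_i\in X_v$ with $m_i<0$ is given, and the plan is to split further according to whether $k$ divides $m_i$. If $k\nmid m_i$, then $\eta_v$ cannot be a $k$-th power, since all zero and pole orders of such a power are divisible by $k$, and so we are back in the first alternative. If instead $k\mid m_i$, then $m_i\leq -k$, and a direct check of the formula defining $\wh m_i$ in Section~\ref{sec:lift} gives $\wh m_i<0$, so every preimage of $z_i$ in $\wh X$ is a marked pole of $\wh\omega$. It then suffices to show that every connected component of $\pi^{-1}(Y)$ contains at least one such preimage, which implies that no component of $\pi^{-1}(Y)$ is subject to the global residue condition (4-ab) in the first place.

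This last point follows because $\pi^{-1}(Y)\to Y$ is a Galois cover of the connected base~$Y$ with deck group $\langle\tau\rangle$, so $\langle\tau\rangle$ acts transitively on the set of connected components of $\pi^{-1}(Y)$, and at the same time permutes the non-empty set $\pi^{-1}(z_i)$; hence every component must meet this set. The only conceptually subtle point is the observation in the sub-case $-k<m_i<0$ that divisibility of orders forces $\eta_v$ to not be a $k$-th power, which is what welds the two alternatives of the hypothesis together; the remainder is routine bookkeeping combining the preceding lemma, Proposition~\ref{prop:fewercomponents}, and the transitivity of the deck action on components of a cyclic cover of a connected base.
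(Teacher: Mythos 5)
Your proof is correct and follows essentially the same route as the paper: the case where $\eta_v$ is not a $k$-th power is handled by the preceding lemma together with Proposition~\ref{prop:fewercomponents}, the sub-case $k\nmid m_i$ is reduced to that case, and the sub-case $k\mid m_i$ is settled by noting that all $k$ preimages of $z_i$ are marked poles of $\whomega$ distributed over every connected component of $\wh Y$. The only difference is that you make explicit the transitivity of the deck action on components, which the paper leaves implicit.
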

\par
\begin{proof}
If $\eta_v$ is not the $k$-th power of an abelian differential, then by the above lemma any
normalized cover of $(Y, \eta|_Y)$ has fewer than $k$ connected components. Hence condition $(4$-{\rm ab}$)$ imposed by $\wh Y$ is automatically satisfied by Proposition~\ref{prop:fewercomponents}.
\par
If~$X_v$ contains a marked pole~$z_i$ of order $m_{i}$, we distinguish two cases. If $k\;|\;m_i$,  it means that any abelian differential $\whomega_v$ such that $\whomega_v^{k}=\pi^{\ast}\eta_v$ has a pole of order $m_i/k$ at each of the~$k$ marked preimages of $z_i$. Then each connected component of $\whY$ contains a marked pole of the differential $\whomega_v$, in which case there is no global residue condition ($4$-ab) imposed by that connected component. On the other hand if $k\nmid m_i$, then $\eta_v$ is
not the $k$-th power of an abelian differential, and we are back to the beginning situation of the proof.
\end{proof}
\par
We now suppose that for every irreducible component
$X_v$ of $Y$, the twisted $k$-differential~$\eta_v$ is the $k$-th power of
an abelian differential, which is holomorphic away from the nodes.
It remains to determine under what conditions such a connected component $Y$
of $X_{>L}$ imposes a non-trivial \whgrc\ on a normalized cover.  In this situation,
for any normalized cover $(\whY,\tau,\whomega)$ of $(Y,\eta)$, the preimage
of any $X_v$ consists of $k$~isomorphic copies of $X_v$. We label these
components by $\whX_v(0), \ldots, \whX_v(k-1)$ in such a way that $\tau$ sends $\whX_v(i)$
to $\whX_v(i+1)$ and $\whX_v(k-1)$ to $\whX_v(0)$. Then the normalized cover has the
property that $\whomega_v(i+1)=\zeta\whomega_v(i)$, where $\zeta$ is the
chosen primitive $k$-th root of unity and $\whomega_v(i) = \whomega|_{\whX_v(i)}$.
\par
We first consider the case of horizontal nodes. Let $X'$ be a connected component of $X_{=K}$  for $K>L$. Note that there is a {\em unique} normalized cover $(\wh{X}',\whomega)$ of $(X',\eta|_{X'})$ which satisfies the matching residues condition (condition~(2) of Definition~\ref{df:twdk}) at the poles of order $k$, as the way to attach the preimages of the nodes is determined by this condition at the poles of order~$k$. Hence the fact that the normalized cover of $(X',\eta|_{X'})$ has fewer than~$k$ components can be characterized in the following way.
\begin{lm}\label{lm:horizontalnodes}
Let $X'$ be a connected component of $X_{=K}$ such that for every irreducible component~$X_v$ of $X'$, the twisted $k$-differential~$\eta_v$ is the $k$-th power of
an abelian differential. Then the twisted $k$-differential  $\eta|_{X'}$ is the $k$-th power of a twisted abelian differential if and only if the normalized cover $\wh{X}'$ has $k$ connected components.
\end{lm}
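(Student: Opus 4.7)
The plan is to match up two combinatorial conditions: the compatibility of local $k$-th roots of $\eta_v$ across horizontal nodes of $X'$, and the connectivity pattern of the normalized cover $\wh{X}'$.

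First I would set up notation. For each irreducible component $X_v$ of $X'$, by assumption $\eta_v=\omega_v^{k}$ for some meromorphic abelian differential $\omega_v$ on $X_v$ (with poles of order at most $1$ at the nodes), and such an $\omega_v$ is unique up to multiplication by a $k$-th root of unity. The canonical cover of $X_v$ is then the trivial disjoint union $\bigsqcup_{i=0}^{k-1}\wh{X}_v(i)$ with each $\wh{X}_v(i)\cong X_v$ equipped with the differential $\zeta^{i}\omega_v$. Since every horizontal node of $X'$ has $\ord_q\eta=-k$ on both branches, all $k$ preimages of $q$ give nodes of $\wh{X}'$, and the matching residue condition~(2) for twisted abelian differentials forces a unique pairing at these preimages. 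Concretely, if $q$ joins $X_{v_1}$ to $X_{v_2}$ and $\Res_{q}\omega_{v_1}=-\zeta^{c(q)}\Res_{q}\omega_{v_2}$, then $\wh{X}_{v_1}(i)$ is glued to $\wh{X}_{v_2}(i+c(q))$ at the corresponding preimage of~$q$.

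For the forward direction, suppose $\eta|_{X'}=(\omega')^{k}$ for a twisted abelian differential $\omega'$ on $X'$. Then for each $i\in\ZZ/k$ the collection $\{\zeta^{i}\omega'|_{X_v}\}$ gives a twisted abelian differential on $X'$, so in particular the matching residues~(2) hold at every horizontal node. Via the identifications above, this produces a sub-nodal-curve $X'_{(i)}\subset\wh{X}'$, isomorphic to $X'$ through~$\pi$, on which $\wh\omega$ restricts to $\zeta^{i}\omega'$. These $k$ sub-curves are pairwise disjoint and exhaust $\wh{X}'$, yielding exactly $k$ connected components.

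For the reverse direction, assume $\wh{X}'$ has $k$ connected components. The deck group $\langle\tau\rangle$ of order $k$ acts transitively on these components (since $\pi$ is cyclic of degree $k$), so the action is free. Pick any component $\wh{X}'_0$; since $\tau$ permutes the $k$ copies $\wh{X}_v(i)$ at each $v$ and the $k$ preimages at each horizontal node freely, $\wh{X}'_0$ contains exactly one irreducible piece above each $X_v$ and exactly one preimage of each node of $X'$. Hence the restriction $\pi|_{\wh{X}'_0}\colon\wh{X}'_0\to X'$ is a bijective map of nodal curves that is an isomorphism on each component, i.e., an isomorphism. Pushing $\wh\omega|_{\wh{X}'_0}$ down by this isomorphism produces a twisted abelian differential $\omega'$ on $X'$, and $(\omega')^{k}=\eta|_{X'}$ follows from $\wh\omega^{k}=\pi^{*}\eta$.

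The main technical point to be careful about is the bookkeeping in the first paragraph: one must verify that the gluing cocycle $c(q)\in\ZZ/k$ arising from the matching residue condition is precisely the obstruction to consistently choosing a global $k$-th root of $\eta|_{X'}$, so that triviality of this cocycle around every loop in the dual graph of $X'$ is simultaneously equivalent to the existence of $\omega'$ and to $\wh{X}'$ splitting into $k$ components. Once this dictionary is in place, both implications reduce to the monodromy analysis sketched above.
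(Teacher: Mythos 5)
Your proposal is correct and follows exactly the route the paper intends: the paper gives no separate proof of this lemma, only the preceding remark that the normalized cover over $X'$ is \emph{uniquely} determined by the matching-residue condition at the poles of order $k$ (plus the illustrative ``horizontal criss-cross'' example), and your argument is the natural fleshing-out of that remark. Your identification of the gluing data as a $\ZZ/k$-valued cocycle on the dual graph of $X'$, whose triviality is simultaneously the obstruction to a global $k$-th root and to the cover splitting into $k$ copies, is precisely the content the authors leave implicit, and both of your implications check out.
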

In order to make this condition more clear, we discuss an example as follows.
\begin{exa}[Horizontal criss-cross]
\label{ex:HorCrCr}
Let $k=2$ and consider a twisted $2$-differential obtained in the following way. We start from an abelian differential $(X,\omega)$ where $\omega$ has four simple poles $p_1, p_2, q_1$ and $q_2$. Suppose that the residues of $\omega$ at $p_{1}$ and $q_{1}$ is $1$ and at~$p_{2}$ and $q_{2}$ is $-1$. We obtain a twisted $2$-differential $(\bar X,\eta)$ by squaring $\omega$ and gluing~$p_1$ with~$q_{1}$ and $p_{2}$ with $q_{2}$. Clearly it satisfies condition~(2) from Definition~\ref{df:twdk} at every node, as the $2$-residues are all $(\pm 1)^2 = 1$.
The canonical cover of $(X,\omega^{2})$ has two components~$X_{\pm}$. In order to glue the nodes of these components
to a normalized cover, we have no choice in the gluing
but to pair the preimages $p_{i}^{+}$ of $p_{i}$ in $X_{+}$ with~$q_{i}^{-}$. Hence the only possible normalized cover, represented in Figure~\ref{cap:HorCrCr}, has only one connected component. On the other hand, if the twisted $2$-differential we started with had been obtained by identifying $p_{1}$ with $p_{2}$ and $q_{1}$ with $q_{2}$, then the normalized cover would have two connected components.
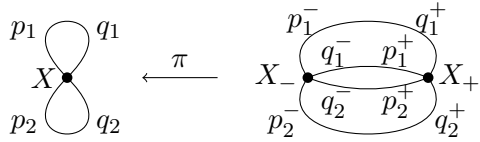
\begin{figure}[ht]
\centering
\begin{tikzpicture}[scale=1]
\fill (0,0) coordinate (x1) circle (2pt); \node [left] at (x1) {$X$};
 \draw (x1) ..controls (-1,1) and (1,1)  ..  (x1) coordinate[pos=.3](p1)coordinate[pos=.7](q1);
  \draw (x1) ..controls (-1,-1) and (1,-1)   ..  (x1) coordinate[pos=.3](p2)coordinate[pos=.7](q2);
  \node [left] at (p1) {$p_{1}$};  \node [right] at (q1) {$q_{1}$};
    \node [left] at (p2) {$p_{2}$};  \node [right] at (q2) {$q_{2}$};
  \begin{scope}[xshift=4cm]
  \draw[->] (-2,0) -- (-3,0);\node[above] at (-2.5,0) {$\pi$};
  \fill (-.8,0) coordinate (x1-) circle (2pt); \node [left] at (x1-) {$X_{-}$};
 \fill (.8,0) coordinate (x1+) circle (2pt); \node [right] at (x1+) {$X_{+}$};
 \draw (x1-) ..controls ++(-.7,-1) and ++(.7,-1)  ..  (x1+)coordinate[pos=.25](p2-)coordinate[pos=.75](q2+);
  \draw (x1-) ..controls ++(.5,-.2) and ++(-.5,-.2)   ..  (x1+)coordinate[pos=.25](q2-)coordinate[pos=.75](p2+);
  \draw (x1-) ..controls ++(-.7,1) and ++(.7,1)  ..  (x1+)coordinate[pos=.35](p1-)coordinate[pos=.65](q1+);
  \draw (x1-) ..controls ++(.5,.2) and ++(-.5,.2)   ..  (x1+)coordinate[pos=.25](q1-)coordinate[pos=.75](p1+);
    \node [left,yshift=.1cm] at (p1-) {$p_{1}^{-}$};  \node [yshift=.25cm] at (q1-) {$q_{1}^{-}$};
    \node [left] at (p2-) {$p_{2}^{-}$};  \node [yshift=-.2cm] at (q2-) {$q_{2}^{-}$};
    \node [yshift=.25cm] at (p1+) {$p_{1}^{+}$};  \node [right,yshift=.1cm] at (q1+) {$q_{1}^{+}$};
    \node [yshift=-.2cm] at (p2+) {$p_{2}^{+}$};  \node [right] at (q2+) {$q_{2}^{+}$};
    \end{scope}
\end{tikzpicture}
 \caption{The dual graphs of $(\bar X,\eta)$ and its normalized cover} \label{cap:HorCrCr}
\end{figure}

\end{exa}

\par
For vertical nodes, the construction of a normalized cover depends on identifying the preimages of the nodes, as described in the preceding section. If we choose the normalized cover $\wh Y\to Y$ such that it consists of $k$ disjoint copies $\whY(0),\ldots,\whY(k-1)$  isomorphic to $Y$, then the \whgrc{}s imposed by these copies on the cover differ by multiplication by powers of $\zeta$. The residue condition imposed by $\whY(0)$ is
\begin{equation}\label{eq:kres}
\sum_{q_j\in Y\cap X_{=L}}r_{q_j^-}=0,
\end{equation}
for some numbers $r_{q_j^-}$ such that $(r_{q_j^-})^k=\Resk_{q_j^-}\eta_{v^-(q_j)}$. Here the particular choice of a $k$-th root $r_{q_j^-}$ of the $k$-residue depends on which preimages of $q_j^-$ on the normalized cover $\whX_{=L}$ are connected to the preimages of the nodes $q_j^+$ lying on $\whY(0)$. Since we can first choose these preimages arbitrarily, and then identify preimages of $q_j^+$ in the other copies $\whY(i)$ with the preimages of $q_j^-$ on $\wh X_{=L}$ in a $\tau$-equivariant way, we have proven one implication of the following lemma for the construction of a normalized cover.
\par
\begin{lm} \label{le:kcopiesGRC}
Let $Y$ be a connected component of $X_{>L}$. If there is a normalized cover $(\wh Y,\whomega)$ of $(Y,\eta)$ satisfying the global residue condition (4-ab) which is the disjoint union of $k$ copies of $Y$, then there exists a normalized cover of $(Y\cup X_{=L},\eta)$ which coincides with $(\wh Y,\whomega)$ over $Y$ and satisfies the condition (4-ab)
 if and only if Equation~\eqref{eq:kres} holds for some $k$-th
roots $r_{q_j^-}$ of $k$-residues of $\eta$ at the nodes $q_j \in Y\cap X_{=L}$.
\end{lm}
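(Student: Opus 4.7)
The forward implication (Equation~\eqref{eq:kres} for some choice of $k$-th roots $\Rightarrow$ existence of the extended normalized cover) is the one already argued in the paragraph preceding the statement: given $r_{q_j^-}$ satisfying~\eqref{eq:kres}, one freely chooses identifications of the preimages of $q_j^+$ lying on $\whY(0)$ with preimages of $q_j^-$ on $\wh X_{=L}$ so that the residue of $\whomega$ at the identified preimage of $q_j^-$ equals $r_{q_j^-}$, and then propagates these identifications to the remaining copies $\whY(1),\ldots,\whY(k-1)$ via $\tau$-equivariance; the residues on $\whY(i)$ are then $\zeta^i r_{q_j^-}$ and~\eqref{eq:kres} ensures that each component $\whY(i)$ satisfies condition (4-ab). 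So the plan is to supply the reverse implication.

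For the ``only if'' direction I would argue as follows. Suppose a normalized cover $(\wh Z,\tau,\whomega)$ of $(Y\cup X_{=L},\eta)$ exists, restricts to $(\wh Y,\whomega)$ over $Y$, and satisfies the global residue condition (4-ab) with respect to the lifted level graph. By hypothesis $\wh Y$ is the disjoint union $\whY(0)\sqcup\cdots\sqcup\whY(k-1)$ of $k$ copies of $Y$ permuted by $\tau$; in particular, $\whY(0)$ is a single connected component of $\wh Z_{>L}$ and every node $q_j\in Y\cap X_{=L}$ has exactly one preimage $\wh q_j$ in $\whY(0)$. Applying condition (4-ab) to $\whY(0)$ gives
\[
  \sum_{q_j\in Y\cap X_{=L}}\Res_{\wh q_j^-}\whomega \= 0.
\]
It remains to identify each $\Res_{\wh q_j^-}\whomega$ as a $k$-th root of $\Resk_{q_j^-}\eta_{v^-(q_j)}$. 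This is immediate from the construction of the canonical cover (Proposition~\ref{prop:cancov}) together with the definition of the $k$-residue via standard coordinates (Proposition~\ref{prop:standard_coordinates}): locally $\pi^*\eta = \whomega^k$, and taking $k$-th powers of the Laurent expansion of $\whomega$ at $\wh q_j^-$ recovers precisely $\Resk_{q_j^-}\eta$ as the $k$-th power of the coefficient of $z^{-1}\,dz$. Setting $r_{q_j^-}:=\Res_{\wh q_j^-}\whomega$ therefore produces a collection of $k$-th roots for which~\eqref{eq:kres} holds.

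I do not foresee a real obstacle here; the content of the lemma is essentially a dictionary between the combinatorial freedom in gluing preimages of vertical nodes on the one hand and the ambiguity of $k$-th roots of $k$-residues on the other. The only point that needs a word of care is that the identification $\Res_{\wh q_j^-}\whomega = r_{q_j^-}$ with $r_{q_j^-}^{\,k}=\Resk_{q_j^-}\eta_{v^-(q_j)}$ is canonical on each fixed cover (once the gluing data are chosen), whereas running over all admissible gluings on $\whX_{=L}$ exhausts all possible tuples of $k$-th roots --- this is precisely the duality that makes the two directions symmetric and reduces the lemma to bookkeeping after invoking Propositions~\ref{prop:cancov} and~\ref{prop:standard_coordinates}.
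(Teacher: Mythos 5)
Your proof is correct and follows essentially the same route as the paper: one direction is the observation that condition (4-ab) imposed by $\whY(0)$ is precisely Equation~\eqref{eq:kres} with $r_{q_j^-}=\Res_{\wh q_j^-}\whomega$, which is a $k$-th root of $\Resk_{q_j^-}\eta_{v^-(q_j)}$ since $\whomega^k=\pi^*\eta$, and the other is that any such tuple of roots is realized by an appropriate $\tau$-equivariant choice of gluings at the vertical nodes, after which~\eqref{eq:kres} and its $\tau^i$-images give (4-ab) for every $\whY(i)$. The only cosmetic discrepancy is that the paper attributes the ``only if'' direction (existence of the cover $\Rightarrow$~\eqref{eq:kres}) to the discussion preceding the lemma and writes out the converse in the proof body, whereas you assign the directions the other way around --- but since you supply both arguments in full, nothing is missing.
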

\par
\begin{proof} In the above discussion we have verified the ``only if'' part.
Conversely, suppose that Equation~\eqref{eq:kres} holds for some $r_{q_j^-}$. Then the
residues of the $k$-th roots $\whomega_v$ of $\eta_v$ on the canonical
cover $\widehat{X}_{=L}$ at the nodes $\wh q_{j,i}^{\,-}$ lying over $q_j^-$ are
$\zeta^i r_{q_j^-}$ for $i=0,\ldots,k-1$, if the nodes are numbered
appropriately.
In this numbering we
construct the normalized cover of $Y\cup X_{=L}$ by connecting the
nodes $\wh q_{j,i}^{\,-}$ to the component $\whY(i)$ for all~$j$. Equation~\eqref{eq:kres}
is then equivalent to the global residue condition imposed by $\whY(0)$, and
more generally $\whY(i)$ imposes the $\tau^{i}$-image of this equation.
We thus conclude that our gluing defines a normalized cover of $\eta$
over  $Y\cup X_{=L}$ satisfying the condition (4-ab).
\end{proof}
\par
We can now conclude the case when a normalized cover contains a disjoint union of $k$ copies of $Y$.
\par
\begin{cor}\label{cor:trivialcover}
Let $Y$ be a connected component of $X_{>L}$.
If the normalized cover $(\wh Y,\whomega)$ of $(Y,\eta)$ is the disjoint union of $k$ copies of $Y$ and satisfies the condition (4-ab), then there exists a normalized cover over $Y\cup X_{=L}$ extending $(\wh Y,\whomega))$ which satisfies the \whgrc\ imposed by $Y$ if and only if Equation~\eqref{eq:GRCk} for the $k$-residues is satisfied.
\end{cor}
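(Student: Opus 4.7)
The plan is to combine Lemma~\ref{le:kcopiesGRC} with the definition~\eqref{eq:P} of $P_{N,k}$; the only new content is a $\tau$-equivariance observation that collapses the residue conditions imposed by the $k$ cyclically permuted copies of $Y$ inside $\wh Y$ down to a single condition.

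First I would observe that, under the hypothesis that $\wh Y = \wh Y(0)\sqcup\cdots\sqcup\wh Y(k-1)$ with $\tau$ cyclically permuting the copies, the \whgrc\ imposed by $Y$ amounts to condition $(4$-ab$)$ at level $L$ holding on each $\wh Y(i)$. Using $\tau^{*}\whomega=\zeta\whomega$ together with the isomorphism $\tau\colon \wh Y(i)\to \wh Y(i+1)$, the residue sum of $\whomega$ at the nodes where $\wh Y(i+1)$ meets $\wh X_{=L}$ equals $\zeta$ times the residue sum at the nodes where $\wh Y(i)$ meets $\wh X_{=L}$. Hence these $k$ equations either all hold or all fail, and the \whgrc\ reduces to condition $(4$-ab$)$ on the single component $\wh Y(0)$.

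Next I would apply Lemma~\ref{le:kcopiesGRC}: under the standing hypotheses of the corollary, the existence of a normalized cover of $(Y\cup X_{=L},\eta)$ extending $(\wh Y,\whomega)$ and satisfying $(4$-ab$)$ on $\wh Y(0)$ is equivalent to the existence of $k$-th roots $r_{q_j^-}$ of the $k$-residues $R_j := \Resk_{q_j^-}\eta_{v^-(q_j)}$ such that
$$ \sum_{j=1}^{N} r_{q_j^-} \= 0, $$
where $q_1,\ldots,q_N$ are the nodes of $X$ joining $Y$ to $X_{=L}$.

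Finally, by~\eqref{eq:P} the number $P_{N,k}(R_1,\ldots,R_N)$ is the product of the sums $\sum_{i=1}^{N} r_i$ indexed by all $N$-tuples $(r_1,\ldots,r_N)$ with $r_i^{k}=R_i$. A product of complex numbers vanishes if and only if one of its factors does, so $P_{N,k}(R_1,\ldots,R_N)=0$ is equivalent to the existence of some choice of $k$-th roots whose sum is zero, i.e.\ to the condition produced by Lemma~\ref{le:kcopiesGRC}. Chaining the two equivalences yields the corollary. No real obstacle arises: all the combinatorial content was already built into Lemma~\ref{le:kcopiesGRC} and into the symmetrization defining $P_{N,k}$, and the only subtle point is the $\tau$-equivariance step that reduces the $k$ residue conditions to one.
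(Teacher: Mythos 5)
Your proposal is correct and follows essentially the same route as the paper: the paper's proof of Corollary~\ref{cor:trivialcover} consists precisely of observing that $P_{N,k}$ is the product of the sums~\eqref{eq:kres} over all choices of $k$-th roots, so that its vanishing is equivalent to one such sum vanishing, and then invoking Lemma~\ref{le:kcopiesGRC}. Your additional $\tau$-equivariance step (collapsing the $k$ conditions on the copies $\wh Y(i)$ to the single condition on $\wh Y(0)$) is not new content either --- it appears verbatim in the paper's proof of Lemma~\ref{le:kcopiesGRC}, where $\wh Y(i)$ is noted to impose the $\tau^i$-image of Equation~\eqref{eq:kres}.
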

\begin{proof}
By definition of the polynomial $P_{n,k}$, Equation~\eqref{eq:GRCk} is simply the product of Equations~\eqref{eq:kres} for all possible choices of $k$-th roots of the relevant $k$-residues. Hence one of those sums is zero if and only if their product is zero.
\end{proof}
\par
It remains to deal with the case when for every $K>L$ on every connected component $Y'$ of $Y_{=K}$ the twisted $k$-differential $\eta|_{Y'}$ is the $k$-th power of a twisted abelian differential $\omega_{v}$, no $X_v$ contains a marked pole, but there exists a normalized cover $\whY$ of $Y$ satisfying the global residue condition for abelian differentials (4-ab) that is not a disjoint union of $k$ copies of $Y$. Such a phenomenon occurs when we identify the irreducible components of $\wh Y$ at the nodes in such a way that there is some crossing of the dual edges which cannot be straightened back (see $\wh{\overline{\Gamma}}_{2}$ in Example~\ref{ex:GRCdependLift}). For this reason, we call such a construction a {\em criss-cross}.
\par
Suppose that $Y$ has such a normalized cover $\whY$ and denote then by $K$ the lowest level such that $\whY$ restricted over $Y_{>K}$ consists of $k$ disjoint copies of $Y_{>K}$, while $\whY$ restricted over some connected component $W$ of $Y_{\ge K}$ has fewer than $k$ connected components.
By assumption, for each connected component $T_{\ell}$ of $W_{>K}$, the restriction of $\whY$ over $T_{\ell}$ consists of $k$ disjoint copies $\wh T_{\ell}(i)$ of $T_{\ell}$. Similarly we denote by $U_{s}$ the connected components of $W_{=K}$. Since we suppose that the restriction $\eta|_{U_{s}}$ is the $k$th power of a twisted abelian differential, there are $k$ disjoint preimages of $U_{s}$ in the normalized cover (see Lemma~\ref{lm:horizontalnodes}). We denote by $\wh U_{s}(i)$ these $k$ preimages. We denote $\whomega_s(i):=\zeta^i\omega_s$ the restriction of  $\pi^{\ast}(\eta)$ to $\wh U_{s}(i)$. Moreover, denote by $\Gamma_{{\rm red}}$ the level graph with two levels, whose vertices are the connected components $T_\ell$ of $W_{>K}$ and $U_s$ of $W_{=K}$, and whose edges are the nodes between these components. Note that this graph has no horizontal edges. The graph $\wh\Gamma_{{\rm red}}$ is defined analogously by adding ``hats'' in the preceding definition.
\par
Since by hypothesis $\wh W$ has fewer than $k$ connected components, there exists a closed loop $\gamma$ in $\Gamma_{{\rm red}}$ such that it has a lift $\wh\gamma$ to $\wh\Gamma_{\rm red}$ which is not a closed loop (and then by $\tau$-equivariance it follows that no lift of $\gamma$ is a closed path).
Given an edge $e$ of $\gamma$ connecting $T_{\ell}$ to $U_{s}$, its lift $\wh e$ starting at $T_{\ell}(0)$ goes to some $U_{s}(o(e))$ and we denote $\zeta_{e}:= \zeta^{o(e)}$. In the sum~\eqref{eq:kres} for $Y=T_{\ell}$, the residue at the node $q_{j}$ corresponding to the edge $e_{j}$ is then given by $\zeta_{e_{j}}\Res_{q_{j}^{-}}\omega_{u}$.
More generally, it follows from the $\tau$-equivariance that for a lift $\wh e$ of $e$ connecting the component $\wh{U}_{s}(a)$ to $ \wh T_{\ell}(b)$ we have $\zeta^{a-b}=\zeta_{e}$.
Applying this observation recursively along $\gamma$, we see that the product $\prod_{e\in\gamma}\zeta_{e}^{\pm 1}$ with alternating negative and positive exponents is a power of $\zeta$ whose exponent $i_{\gamma}$ gives the difference between the starting and the ending component of $\wh{U}_{\ell_{1}}$ of any lift $\wh\gamma$ of $\gamma$. By the hypothesis that $\wh \gamma$ is not closed, this exponent $i_{\gamma}$ is not zero modulo $k$, and hence the product of the roots with alternating exponents $\pm 1$ in the above is not equal to $1$.
\par
Now for the converse, suppose that there exists a connected component $W$ of $X_{\geq K}$ such that Equation~\eqref{eq:kres} is satisfied by taking $Y=T_{\ell}$ where $T_{\ell}$ are the connected components of $W_{>K}$. Suppose moreover that there exists a loop $\gamma\in\Gamma_{{\rm red}}$ such that the alternating product $\prod_{e\in\gamma}\zeta_{e}^{\pm1}$ is not equal to $1$. Then as explained in Lemma~\ref{le:kcopiesGRC}, we can construct a normalized cover of $(W,\eta|_{W})$ satisfying condition (4-ab) by identifying the preimages of the nodes between the components~$T_{\ell}$ and $W_{=K}$ in the way given by Equation~\eqref{eq:kres}. As explained in the previous paragraph, the assumption that the alternating product of the roots along $\gamma$ is not $1$ implies that the lifts of~$\gamma$ in $\wh\Gamma_{{\rm red}}$ are not closed paths. Hence there is a normalized cover with fewer than $k$ connected components above $W$, and
Proposition~\ref{prop:fewercomponents} can be applied.
\par
To conclude this discussion, we would like to state the condition for the loop $\gamma$ in the dual graph of $W$ directly (instead of in $\Gamma_{\rm red}$). This can be done as follows. To every path in the dual graph of $W$ we associate a path in $\Gamma_{\rm red}$ by contracting the edges between vertices of the connected components $T_{\ell}$ of $W_{>K}$. Conversely, to a path in $\Gamma_{\rm red}$ we associate a (not necessarily unique) path in the dual graph of $W$ such that it has the same edges joining the connected components~$T_{\ell}$ of $W_{>K}$ to the connected components $U_{s}$ of $W_{=K}$, and then connects arbitrarily within each~$T_{\ell}$. Clearly the condition on $\gamma$ can be reformulated by saying that there exists a loop $\gamma$ in the dual graph of $W$ such that the alternating product of the roots associated to the edges of $\gamma$ touching $W_{=K}$ is not equal to one.
\par
Summarizing the above discussion we have proved the following proposition.
\begin{prop}\label{prop:existconnectedcover}
Let $(Y,\eta)$ be a twisted $k$-differential satisfying condition (3) such that for every $K>L$ on every connected component $Y'$ of $Y_{=K}$ the twisted $k$-differential $\eta|_{Y'}$ is the $k$-th power of a twisted abelian differential $\omega_{v}$ without marked poles. Then there exists a normalized cover of $(Y,\eta)$ satisfying condition (4-ab) which is not a  disjoint union of $k$~copies of~$Y$ if and only if
  there exists a level~$K$, a connected component $W$ of $Y_{\geq K}$, a simple closed path $\gamma$ in the dual graph of $W$ such that  for every connected component $T_{\ell}$ of $W_{>K}$ and every edge $e$ in the set $E_{\ell}$  of nodes connecting $T_{\ell}$ to $Y_{=K}$, there exists a  $k$-th root of unity $\zeta_{e}$ satisfying
\begin{equation*}
   \sum_{e\in E_{\ell}} \zeta_e\Res_{q_e^-}\omega_{v^-(e)}\=0 \quad \text{ such that }    \quad   \prod_{e\in\gamma\cap E} \zeta_{e}^{\pm 1} \neq 1
\end{equation*}
 where $E=\bigcup_{\ell}E_{\ell}$ and the last product has alternating $\pm 1$ exponents along $\gamma$.
\end{prop}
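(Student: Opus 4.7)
My plan is to prove both implications by organizing the combinatorial data of a normalized cover as a decoration of the reduced dual graph by $k$-th roots of unity on edges, and translating condition (4-ab) and the failure of triviality of the cover into conditions on this decoration.

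For the forward direction, suppose a normalized cover $(\wh Y,\whomega)$ exists that is not a disjoint union of $k$ copies of $Y$ and satisfies (4-ab). I would first locate the lowest level $K$ at which the cover fails to split as $k$ disjoint copies, and pick a connected component $W$ of $Y_{\geq K}$ where this failure occurs. Above every connected component $T_\ell$ of $W_{>K}$ the cover splits by minimality as $k$ disjoint copies $\wh T_\ell(0),\ldots,\wh T_\ell(k-1)$ on which $\tau$ acts by cyclic permutation, and above every connected component $U_s$ of $W_{=K}$ the assumption that $\eta|_{U_s}$ is a $k$-th power of an abelian differential together with Lemma~\ref{lm:horizontalnodes} forces the preimage to split as $k$ copies $\wh U_s(0),\ldots,\wh U_s(k-1)$. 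For each edge $e\in E_\ell$ joining $T_\ell$ to $U_s$, the unique lift starting at the component $\wh T_\ell(0)$ lands on $\wh U_s(o(e))$ for some integer $o(e)\in\{0,\ldots,k-1\}$, and I set $\zeta_e:=\zeta^{o(e)}$. The residue condition (4-ab) imposed by $\wh T_\ell(0)$ reads exactly $\sum_{e\in E_\ell}\zeta_e\Res_{q_e^-}\omega_{v^-(e)}=0$, and the $\tau$-equivariance propagates this to all other sheets.

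To extract the loop $\gamma$, I would use the fact that $\wh W$ has strictly fewer than $k$ connected components. In the reduced graph $\Gamma_{\rm red}$ (whose vertices are the components $T_\ell$ and $U_s$ and whose edges are the nodes between them), having fewer components in the cover is equivalent to the existence of a loop in $\Gamma_{\rm red}$ whose lift is not a closed path. Walking around such a loop and multiplying the corresponding $\zeta_e^{\pm 1}$ with signs recording whether the loop traverses $e$ in the upward or downward direction yields precisely the monodromy obstruction $\prod_{e\in\gamma}\zeta_e^{\pm 1}\ne 1$. After transporting the loop from $\Gamma_{\rm red}$ back to the dual graph of $W$ by contracting paths inside each $T_\ell$, I obtain a simple closed path with the desired property.

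For the converse, given the data $(K,W,\gamma,\{\zeta_e\})$, I would build the normalized cover explicitly. Above $W_{>K}$ and above $W_{=K}$ I take the canonical $k$ disjoint copies already forced by the hypothesis that $\eta$ is locally a $k$-th power with no marked poles. For every edge $e$ joining $T_\ell$ to $U_s$ I glue the preimage in $\wh T_\ell(i)$ to the preimage in $\wh U_s(i+o(e))$, extending $\tau$-equivariantly; the residue sums assumed in the statement become exactly condition (4-ab) for the sheets above $T_\ell$, and the matching residue condition at horizontal nodes in $W_{=K}$ holds automatically since those covers are uniquely determined. Finally, the hypothesis $\prod_{e\in\gamma}\zeta_e^{\pm 1}\ne 1$ guarantees that some lift of $\gamma$ in the cover is not closed, hence $\wh W$ has strictly fewer than $k$ components. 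To finish over all of $Y$, I complete the cover above $Y_{>L}\setminus W$ by any trivial choice of $k$ disjoint copies, compatible via Lemma~\ref{le:kcopiesGRC}.

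The main obstacle I expect is making sure the passage between the dual graph of $W$ and the reduced graph $\Gamma_{\rm red}$ is done cleanly enough that the exponents $\pm 1$ and the roots $\zeta_e$ are unambiguously defined, and that the loop condition in $\Gamma_{\rm red}$ truly translates to a simple closed path in the dual graph of $W$ without introducing spurious cancellations. The cyclic bookkeeping of indices modulo $k$, especially when $o(e)$ depends on choices of labelings of sheets, requires care but no essentially new idea beyond the $\tau$-equivariance already exploited above.
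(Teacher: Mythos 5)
Your proposal is correct and follows essentially the same route as the paper: the same choice of the lowest level $K$ where the cover fails to split, the same use of Lemma~\ref{lm:horizontalnodes} over $W_{=K}$, the same definition of $\zeta_e=\zeta^{o(e)}$ from lifts based at $\wh T_\ell(0)$, the same monodromy argument via a loop in $\Gamma_{\rm red}$ with non-closed lift, and the same explicit gluing for the converse via Lemma~\ref{le:kcopiesGRC}. No substantive differences.
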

\par
We are now ready to prove that the \whgrc\ ($\widehat 4$) is equivalent to the global $k$-residue condition (4).
\begin{proof}[Proof of the equivalence of $(\widehat{4})$ and $(4)$]
 We assemble the statements proven above to show that, for a twisted
$k$-differential, satisfying conditions (0)--(4)  is equivalent
to the existence of a normalized cover satisfying condition (4-ab).
\par
Suppose first that the global $k$-residue condition~(4) holds. We will construct the normalized
cover satisfying condition~($\widehat{4}$) inductively on the number of levels. The claim is empty, hence true, for twisted $k$-differential with one level. Suppose that the claim is true for every twisted $k$-differential with at most $|L|$ levels satisfying condition (3) (we prefer using negative levels, so that maximum is at level 0, so $L$ is negative). Let $(X,\eta)$ be a twisted $k$-differential with  $|L|+1$ levels satisfying conditions (3) and $(4)$, where the levels are $0,-1, \dots , L$ for simplicity. Let $Y_{1},\dots,Y_{r}$ be the connected components of $X_{>L}$. By assumption, for every $Y_{i}$ there exists a normalized cover $\wh Y_{i}$. If condition iii) holds for $Y_{i}$, then $\wh Y_{i}$ has fewer than $k$ connected components according to Lemma~\ref{lm:horizontalnodes}.  Moreover, if condition  iv) holds for $Y_{i}$, we may assume that $\wh Y_{i}$ has fewer than $k$ connected components, since Equations~\eqref{eq:cancellation1} and~\eqref{eq:cancellation2}
are exactly the hypothesis of  Proposition~\ref{prop:existconnectedcover}.
\par
Suppose that one of the first four conditions i), ii),  iii) or iv) in (4) holds for $Y_{i}$. Then by Corollary~\ref{cor:iandii} (for conditions i) and ii)) and Proposition~\ref{prop:fewercomponents} (for conditions iii) and iv)), any normalized cover of $(X,\eta)$ will satisfy the global residue condition for abelian differentials imposed by $\wh Y_{i}$. Now if none of these conditions holds for $\wh Y_{i}$, then condition v) must be satisfied. We use at $Y_{i}$ a normalized cover given in Corollary~\ref{cor:trivialcover}, which satisfies the global residue condition for abelian differentials at every preimage of every component $Y_{i}$. Hence we have constructed a  normalized cover of $X$ satisfying condition (4-ab), proving that condition $(\widehat{4})$ holds for $(X,\eta)$. Indeed we get a little more. If every component $Y_{i}$ has $k$ disjoint preimages and if the level $K$ in Proposition~\ref{prop:existconnectedcover} coincides with $L$, then there exists a  normalized cover of $X$ satisfying (4-ab) with fewer than $k$ connected components.
 \par
Conversely, suppose that the  \whgrc\ $(\widehat{4})$ holds. We will show inductively on the number of levels that condition (4) holds. If there is only one level, the claim is again empty, hence true. Suppose that the claim is true for every twisted $k$-differential with at most $|L|$ levels satisfying condition (3). Let $(X,\eta)$ be a twisted $k$-differential with  $|L|+1$ levels satisfying conditions (3) and $(\widehat{4})$. We denote by $(\whX,\wh\omega)$ a  normalized cover of $(X,\eta)$ given by condition $(\widehat{4})$. Now by induction, condition $(4)$ holds for every level $K>L$ and for every connected component of $X_{>K}$. Let $Y_{1},\cdots,Y_{r}$ be the connected components of $X_{>L}$. For each $Y_{i}$, we will analyze the global residue condition for abelian differentials (4-ab) induced by $\wh Y_{i}$ on $\wh\omega$.
\par
First the global residue condition can be void if the preimages $\wh Y_{i}$ of $Y_i$ contain a marked pole. Then the component $Y_{i}$ also contains a marked pole, and hence condition i) in (4) is satisfied. Next, the global residue condition for abelian differentials can be automatically satisfied if there are fewer than $k$ connected preimages of $Y_{i}$ in $\whX$. This can happen either because the restriction of $\eta$ on $Y_{i}$ is not the $k$-th power of an abelian differential or because there is some criss-cross gluing of nodes in $\whX$ over $Y_{i}$. In the former, condition ii) is satisfied. In the latter, either the criss-cross occurs with horizontal nodes and the hypothesis of Lemma~\ref{lm:horizontalnodes} is satisfied, leading to iii). Or the criss-cross occurs with vertical nodes and the hypothesis of Proposition~\ref{prop:existconnectedcover} must be satisfied for some level $K>L$,  leading to condition iv). If none of these cases happens, then the global residue condition for abelian differentials induced by $\wh Y_{i}$  is given by the vanishing of a factor of the polynomial defined in Equation~\eqref{eq:P}, which gives Equation~\eqref{eq:GRCk} of condition $(4)$. Hence condition $(4)$ holds at level $L$.
\end{proof}


\section{Dimension of spaces of twisted $k$-differentials}
\label{sec:dimension}

In this section we prove Theorem~\ref{thm:dimtwd}. Let $\overline{\Gamma}$ be a level graph and $h$ be the number of horizontal edges of $\overline{\Gamma}$. Recall that $\komoduliab$ is the union of the components of $\komoduli(\mu)$ consisting of $k$-th powers of holomorphic abelian differentials, and $\komodulinoab$ is the union of all the other components. We denote by $\tkdab$ (resp. $\tkdnoab$) the space of twisted $k$-differentials of type $\mu$ compatible with $\overline{\Gamma}$ which can be smoothed into $\komoduliab$ (resp. $\komodulinoab$), i.e., the corresponding twisted $k$-differential is a scaling limit of a one-parameter family in $\komoduliab$ (resp. $\komodulinoab$). We emphasize that the underlying dual graph of every point in $\tkdab$ (resp. $\tkdnoab$) is exactly $\Gamma$ and we allow global scaling of the differentials on the components $X_v$ corresponding to the vertices $v$ of $\Gamma$. For $k=1$, we simply use $\tdab$ to denote the space of twisted abelian differentials of type $\mu$ compatible with $\overline{\Gamma}$, which in the above notation is either $\mathfrak{M}^{1,\ab}(\overline{\Gamma})$ if $\mu$ is a holomorphic signature, or $\mathfrak{M}^{1,\nonab}(\overline{\Gamma})$ otherwise. We will verify the dimension count in Theorem~\ref{thm:dimtwd} in the following two steps, first for $k=1$ and then for $k\geq 2$.
\par
\begin{thm} \label{thm:dimabtwd} The space
$\tdab$ of twisted abelian differentials
is either empty or has dimension equal to $\dim \omoduli(\mu) - h$.
\end{thm}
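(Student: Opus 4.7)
The plan is to realize $\tdab$ as the intersection of a product of smooth strata with a linear subspace defined by residue conditions, and then to perform a combinatorial count of the independent conditions.

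I would first set up the ambient space. Conditions~(0) and~(1) of Definition~\ref{df:twdk} force, for each vertex~$v$, a signature $\mu_v$ on $X_v$ obtained by appending the orders at the node branches to the orders of the original marked points. The assignment $\eta \mapsto (\eta_v)_v$ identifies $\tdab$ with the subspace of $M := \prod_{v \in V(\Gamma)} \omoduli[g_v](\mu_v)$ cut out by the matching residue condition~(2) at horizontal nodes together with the global residue condition~(4-ab) at each level. Theorem~\ref{thm:introdim}, combined with the dual-graph identities $\sum_v (2g_v-2) = 2g-2-2|E(\Gamma)|$ and $\sum_v n_v = n + 2|E(\Gamma)|$, yields
\[
\dim M \;=\; 2g - 2 + n + |T|,
\]
where $T \subseteq V(\Gamma)$ is the set of vertices at which $\mu_v$ is holomorphic, i.e. those $v$ carrying no marked pole and lying strictly above every neighbor.

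Next I would parameterize the residues. For each $v \in V_{\mathrm{mero}} := V(\Gamma)\setminus T$, write $p_v$ for the number of poles of $\eta_v$. By the strong Mittag-Leffler principle for strata of meromorphic abelian differentials, the residue map $\omoduli[g_v](\mu_v) \to \CC^{p_v}$ surjects onto the hyperplane $H_v = \{\sum_j r_j = 0\}$ with smooth fibers of dimension $2g_v - 1 + n_v - p_v$. Assembling, the product residue map $M \to \prod_{v \in V_{\mathrm{mero}}} H_v$ is a smooth surjection, and $\tdab$ is the preimage of the affine-linear subspace $\Lambda$ cut out by the $h$ horizontal matching equations together with one GRC equation per sink-free component at each level.

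The heart of the argument is the combinatorial identity
\[
\codim_{\prod_v H_v} \Lambda \;=\; h + |T| - \epsilon,
\]
where $\epsilon = 1$ if $\mu$ is holomorphic and $\epsilon = 0$ otherwise. Granting it, the submersion formula together with generic transversality of the residue conditions gives the pure dimension
\[
\dim \tdab \;=\; \dim M - (h + |T| - \epsilon) \;=\; 2g - 2 + n + \epsilon - h \;=\; \dim \omoduli(\mu) - h.
\]
I would prove the codimension identity by induction on the number of levels, peeling off the lowest level at each step: the freshly introduced GRC equations at the bottom of $X_{>L}$ are compared with the residue theorems on the lower components, and the signs arrange so that precisely one relation is absorbed per component in $T$. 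The extra term $\epsilon$ arises because summing all residue theorems over the whole curve produces a genuine new dependency when $\mu$ is holomorphic (no marked poles to absorb the global sum), and no new dependency otherwise.

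The main obstacle is the combinatorial identity above: the interaction between GRC equations across levels and the residue theorems on individual components is delicate, and an accounting mistake by one changes the answer. The cleanest way to organize this is to reinterpret the count as the Euler characteristic of a chain complex built from the level graph---this is the \emph{homological argument} alluded to in the theorem and simultaneously delivers both the codimension identity and the transversality, since exactness at each degree controls both the number of relations and their independence.
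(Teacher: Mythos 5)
Your reduction is set up correctly, and after unwinding notation it asks for exactly the count the paper establishes: your $|T|$ is the paper's number $c_{\rm M}$ of pole-free local maxima, your $\epsilon$ is its $\delta_P$, your ambient dimension $\dim M=2g-2+n+|T|$ agrees with its computation of $h_1(\Sigma\setminus\{P\cup\Lambda^\circ\},\Lambda^+\cup Z)$, and your codimension identity is precisely its statement $\dim R^{\rm res}-\dim R^{\rm grc}=c_{\rm M}-\delta_P$ (plus $h$ for the horizontal matchings, which the paper instead disposes of at the outset by plumbing each horizontal node at the cost of one dimension). So the skeleton is sound and essentially coincides with the paper's strategy.

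The gap is that the codimension identity --- which is the entire content of the theorem --- is asserted rather than proven; you flag it yourself as ``the main obstacle'' and offer only unexecuted strategies. The concrete difficulty with the level-by-level induction is that the GRC equation attached to a connected component $Y$ of $X_{>L}$ is not ``new at level $L$'': the nodes hanging below $Y$ reappear, grouped differently, in the GRC equations at every deeper level, and the redundancies among all of these equations are generated by the residue theorems on the intermediate components. The paper's device for linearizing this all at once is Proposition~\ref{prop:GRCviaHomo}: a residue assignment $\rho\colon\Lambda\to\CC$ satisfies the global residue condition if and only if it is induced by linear maps on the graded pieces of the filtration $V_j=\Im\bigl(H_1(\Lambda_j)\to H_1(\Sigma\setminus P,Z)\bigr)$, whence $\dim R^{\rm grc}=\dim\Im\bigl(H_1(\Lambda)\to H_1(\Sigma\setminus P,Z)\bigr)$; the count $c_{\rm M}-\delta_P$ then falls out of the long exact sequences of the triples $Z\subset\Lambda\cup Z\subset\Sigma\setminus P$ and $\Lambda^+\cup Z\subset\Lambda^+\cup\Lambda^-\cup Z\subset\Sigma\setminus\{P\cup\Lambda^\circ\}$ together with excision. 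This is the ``chain complex'' you gesture at, but it lives on the pinched surface, not only on the level graph, and constructing it is the missing step. Two smaller points: your ``strong Mittag-Leffler principle'' (global surjectivity of the residue map of a stratum onto the residue hyperplane) is stronger than what is true in general; for the dimension count you only need that the residue map is a submersion onto the hyperplane near points of $\tdab$, which follows from period coordinates because the only linear relation among the homology classes of small loops around the poles of $\eta_v$ is the one underlying the residue theorem. And the transversality of the $h$ horizontal matching equations to the GRC system still has to be checked if you keep them in the linear algebra instead of plumbing them away first.
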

\par
\begin{thm} \label{thm:dimtwkd}
For $k\geq 2$, the space
$\tkdnoab$ is either empty or has
dimension equal to $\dim \komodulinoab - h$.
\end{thm}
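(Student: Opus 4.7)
My plan is to prove Theorem~\ref{thm:dimtwkd} by reducing it to the already established abelian case Theorem~\ref{thm:dimabtwd} through the normalized cover construction. By Lemma~\ref{lemma:existenceLift}, every twisted $k$-differential $\eta$ compatible with $\overline{\Gamma}$ lifts to a normalized cover $(\pi\colon \wh{X} \to X, \tau, \wh{\omega})$, where $\wh{\omega}$ is a twisted abelian differential of type $\wh{\mu}$ compatible with the lifted level graph $\wh{\overline{\Gamma}}$ by Lemma~\ref{lemma:liftcompatibleorder}. Conversely, any twisted abelian differential $\wh{\omega}$ on $\wh{X}$ satisfying the equivariance $\tau^{*}\wh{\omega} = \zeta \wh{\omega}$ descends via $\wh{\omega}^{k} = \pi^{*}\eta$ to a twisted $k$-differential on $X$, and by the proof of Theorem~\ref{thm:kmain} in Section~\ref{sec:lift} the differential $\eta$ is smoothable into $\komodulinoab$ if and only if $\wh{\omega}$ is smoothable into $\omoduli[\wh{g}](\wh{\mu})$. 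Thus the assignment $\eta \mapsto \wh{\omega}$ identifies $\tkdnoab$, up to the finite $k$-to-one ambiguity of multiplying $\wh{\omega}$ by $k$-th roots of unity, with the $\tau = \zeta$ eigenlocus inside $\tdab[\wh{\overline{\Gamma}}]$.

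The next step is a dimension count on the cover. Applying Theorem~\ref{thm:dimabtwd} to $\wh{\omega}$ gives
\[
\dim \tdab[\wh{\overline{\Gamma}}] = \dim \omoduli[\wh{g}](\wh{\mu}) - \wh{h},
\]
where $\wh{h}$ denotes the number of horizontal edges of $\wh{\overline{\Gamma}}$. A Riemann-Hurwitz computation along the lines of Proposition~\ref{prop:cancov} yields $2\wh{g} - 2 + \wh{n} = k(2g - 2 + n)$. Moreover, every horizontal node of $\overline{\Gamma}$ corresponds to a pole of order exactly $-k$ and is thus unramified in the normalized cover, so it has exactly $k$ preimages, giving $\wh{h} = kh$.

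The third and most delicate step is to extract the $\tau = \zeta$ eigenspace and confirm that it has dimension exactly $\dim \komodulinoab - h$. Following the strategy of the proof of Theorem~\ref{thm:periodCoorkDiff}, the tangent space of $\tdab[\wh{\overline{\Gamma}}]$ at $\wh{\omega}$ can be realized as a hypercohomology group on $\wh{X}$ carrying a natural $\tau$-action, which decomposes into eigenspaces for each $k$-th root of unity. Among eigenspaces for primitive $k$-th roots of unity, the Galois symmetry $\zeta \mapsto \zeta^{j}$ with $\gcd(j, k) = 1$ forces equal dimensions; for non-primitive eigenvalues $\zeta^{j}$ with $\gcd(j, k) = d > 1$, the corresponding eigenspace is handled inductively by descending to twisted $(k/d)$-differentials on the intermediate quotient $\wh{X}/\langle \tau^{d} \rangle$. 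The main obstacle is precisely this eigenspace bookkeeping, especially when some components of $\eta$ happen to be $k$-th powers of abelian differentials (so that the cover over those components splits off disconnected pieces), while the global $\eta$ remains non-abelian. A plausible alternative, perhaps more intrinsic, is to mimic Theorem~\ref{thm:DeformkDiff} directly on $X$, describing $T_{\eta}\tkdnoab$ as the hypercohomology of a two-term complex built from the $k$-Lie derivative on each component together with the matching $k$-residue conditions at horizontal nodes and the global $k$-residue condition at vertical boundaries, thereby avoiding the $\tau$-eigenspace analysis altogether.
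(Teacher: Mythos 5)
Your first step --- passing to a normalized cover and trying to identify $\tkdnoab$ with an eigenlocus of twisted abelian differentials on the cover --- is the same starting point as the paper, and your numerology ($\wh{h}=kh$ for horizontal nodes, the Riemann--Hurwitz relation $2\wh{g}-2+\wh{n}=k(2g-2+n)$) is correct. But there is a genuine gap at exactly the point you flag yourself: the extraction of the $\tau=\zeta$ eigenspace is the entire content of the theorem, and neither of your two proposed routes is carried out. Applying Theorem~\ref{thm:dimabtwd} as a black box to $\tdab[\wh{\overline{\Gamma}}]$ and then invoking Galois symmetry cannot work, for two reasons. First, knowing the total dimension and that the eigenspaces for primitive $k$-th roots of unity are equidimensional still leaves the non-primitive eigenspaces undetermined, so the total does not pin down the $\zeta$-piece. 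Second, and more fundamentally, the global residue condition does not decompose evenly across eigenspaces: by Proposition~\ref{prop:fewercomponents}, in the $\zeta$-eigenspace the conditions coming from nodes whose orders are not multiples of $k$ (i.e., curves outside $\whLa_k$) are automatically satisfied, so the number of \emph{independent} residue conditions seen by the $\zeta$-eigenspace is $c_{k,{\rm M}}$ (pole-free local maxima lying in $\Sigma_k$), not $1/k$ of the count on the full cover. A black-box use of the abelian theorem cannot detect this, which is why the paper does not use Theorem~\ref{thm:dimabtwd} as a lemma but instead reruns the entire homological argument equivariantly: it models the stratum on the $\zeta$-eigenspace $E_1(\wh\Sigma\setminus\whP,\whZ)$, defines the residue spaces $R^{\rm grc}_k\subseteq R^{\rm res}_k\subseteq\CC^{e^0(\whLa_k)}$ inside the equivariant assignments only at nodes over $\Lambda_k$, and recomputes each term of the long exact sequences in the $\zeta$-eigenspace (in particular $e_2(\wh\Sigma\setminus\whP,\whZ)=0$, replacing the $\delta_P$ correction of the abelian case).

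Your "plausible alternative" --- an intrinsic two-term complex on $X$ itself encoding the $k$-Lie derivative, the matching $k$-residues, and the global $k$-residue condition --- is an interesting idea but is only stated, not constructed; in particular it is not clear how to encode condition (4) of Definition~\ref{def:GRCk} (with its cases involving criss-crosses and the polynomial $P_{N,k}$) as a linear condition in such a complex without again passing to the cover. There is also a smaller imprecision in your setup: the normalized cover $\whX$ and the lifted graph $\wh{\overline{\Gamma}}$ depend on $\eta$ (through the identifications of node preimages), so the identification of $\tkdnoab$ with a single eigenlocus inside one fixed $\tdab[\wh{\overline{\Gamma}}]$ holds only locally and only after fixing the discrete gluing data. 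To complete the proof you would need to carry out the equivariant homological computation, i.e., establish the analogues of \eqref{eq:LLResThm}, \eqref{eq:aim}, \eqref{eq:h1basic}, \eqref{eq:Rdim} and \eqref{eq:h1bdconv} with $h_i$ replaced by the eigenspace dimensions $e_i$ and with $\Lambda$ replaced by $\whLa_k$.
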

The same result holds by replacing ``non-ab''  with ``ab'' in Theorem~\ref{thm:dimtwkd}, whose proof simply reduces to the abelian case as in Theorem~\ref{thm:dimabtwd}, because any deformation of a $d$-th power of a $(k/d)$-differential remains to
be a $d$-th power as explained in and after Theorem~\ref{thm:DeformkDiff}.
\par
We do some preparations before proving these theorems. First, we can reduce the situation to $h = 0$, i.e., $\oG$ has no horizontal edges, since such a node is locally smoothable by classical plumbing, which affects the dimension count precisely by one. Hence from now on we assume that the level graph $\oG$ has the property that $h = 0$.
\par
Consider the case of abelian differentials. Take a pointed topological surface $\Sigma$ with $\Lambda\subset \Sigma$ as a disjoint union of simple closed curves such that the degenerate surface with dual graph $\Gamma$ is obtained by
pinching curves in $\Lambda$ to the corresponding nodes. Denote by $\Lambda^\circ$ an open thickening of $\Lambda$ with upper boundary $\Lambda^+$ and lower
boundary~$\Lambda^-$. Let $P\subset \Sigma\setminus \Lambda^\circ $ be the set of marked points corresponding to the interior poles of the differentials, and $Z\subset \Sigma\setminus \Lambda^\circ$ corresponding to the interior marked zeros.
\par
Compatibility of twisted abelian differentials with $\oG$ is governed by the global residue condition ($4$-ab), which is imposed to the residue assignments at the nodes of the degenerate surface. Moreover, such residue assignments should also satisfy the residue theorem on each irreducible component of the surface. Therefore, we define the following subspaces of residue assignments
$$ R^{\rm grc} \subseteq R^{\rm res} \subseteq \CC^{h^0(\Lambda)}, $$
where $R^{\rm res}$ is the subspace satisfying the residue theorem at the irreducible components of $\Sigma\setminus \Lambda$ that have no marked poles in the interior but have poles at some resulting nodes (i.e., those lower level components without poles in the interior and moreover not local maxima of $\oG$), and where $R^{\rm grc}$ is the subspace cut out further by the global residue condition for twisted differentials compatible with $\oG$. Let $c_{\rm L}$ be the number of irreducible lower level components without poles in the interior and not local maxima of $\oG$. Note that each component counted in $c_{\rm L}$ admits an independent condition for the constraint of the residue theorem, since every (polar) node belongs to a unique such component. It follows that
\be \label{eq:LLResThm}
h^0(\Lambda) - \dim R^{\rm res} \= c_{\rm L}.
\ee
Note also by our setting that the residue map
$$\tdab \to R^{\rm res}$$
factors through~$R^{\rm grc}$.
\par
Next we give an alternative form of the global residue condition ($4$-ab). Without loss of generality, assume that the $N$ levels of $\oG$ are labeled by zero for the top level and by $-i$ for the $i$-th level for $1\leq i \leq N-1$. Treating $\Lambda$ as the set of edges of $\oG$, define a subset $\Lambda_j \subseteq \Lambda$ by
$$ \Lambda_j \=
\bigl\langle  \lambda \in \Lambda \,:\,
\ell (v^-(\lambda)) \leq -j \bigr\rangle.
$$
Namely,  $\Lambda_j$ is the subset of simple closed curves in $\Lambda$ whose corresponding edges in $\oG$ have lower ends on level $\leq -j$.
Let $V_j\subset H_1(\Sigma\setminus P, Z)$ be the image subgroup generated by the classes of the simple closed curves in $\Lambda_j$ under the map $H_1(\Lambda) \to H_1(\Sigma\setminus P, Z)$ induced by the inclusion $\Lambda\hookrightarrow \Sigma$. We thus obtain
a filtration of linear subspaces
$$ 0\= V_N \,\subseteq \, V_{N-1} \,\subseteq\, \cdots \,\subseteq\, V_1. $$
Since any residue assignment to the nodes of the pinched surface corresponds to a function $\rho: \Lambda \to \CC$, we can characterize the subspace $R^{\rm grc}$ as follows.
\par
\begin{prop}  \label{prop:GRCviaHomo}
A residue assignment $\rho: \Lambda \to \CC$
satisfies the global residue condition if and only if there exist linear maps
$$ \rho_j: V_j/V_{j+1} \to \CC \quad \text{for $j=1,\ldots,N-1$,}$$
such that $\rho_j(\lambda) = \rho(\lambda)$ for all simple closed curves
$\lambda$ in $\Lambda$, where~$j$ is determined by $\ell(v^-(\lambda)) = -j$.
\par
In particular, $\dim R^{\rm grc} = \dim V_1 = \dim \Im (H_1(\Lambda) \to H_1(\Sigma\setminus P, Z))$.
\end{prop}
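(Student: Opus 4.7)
I would prove the proposition by writing down an explicit presentation of each graded piece $V_{j}/V_{j+1}$ and matching its defining relations to the global residue condition together with the residue theorem. By construction $V_{j}$ is the image of $H_{1}(\Lambda_{j})$ in $H_{1}(\Sigma\setminus P,Z;\CC)$, so $V_{j}/V_{j+1}$ is generated by the classes of $\lambda\in\Lambda_{j}\setminus\Lambda_{j+1}$ (equivalently, those $\lambda$ with $\ell(v^{-}(\lambda))=-j$), and the relations among these generators are of the form $\partial W\bmod V_{j+1}$ for a $2$-chain $W$ in $\Sigma\setminus P$ with $\partial W\in V_{j}$. Such a $W$ is necessarily a union $\bigcup_{v\in S}X_{v}$ of components for some $S\subseteq V(\Gamma)$ containing no pole vertex; the requirement $\partial W\in V_{j}$, combined with the reduction $h=0$ (no horizontal edges), forces $S\cap V(\oG_{>-j})$ to be a union of entire connected components of $\oG_{>-j}$.

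Splitting $S=S^{>}\sqcup S^{=}\sqcup S^{<}$ according to whether $\ell(v)$ is $>-j$, $=-j$, or $<-j$, I would check that the contribution of $S^{<}$ to $\partial W$ lies entirely in $V_{j+1}$, that the contribution of $S^{>}=\bigsqcup_{i}Y_{i}$ is $\sum_{i}\sum_{\lambda\in\partial^{=}Y_{i}}\lambda\bmod V_{j+1}$, and that of $S^{=}$ is $-\sum_{w\in S^{=}}\sum_{\lambda:\,v^{-}(\lambda)=w}\lambda\bmod V_{j+1}$, the signs arising from the general formula $\partial X_{w}=\sum_{\lambda}\bigl(\chi_{S}(v^{+}(\lambda))-\chi_{S}(v^{-}(\lambda))\bigr)\lambda$. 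Consequently every relation in $V_{j}/V_{j+1}$ is a $\ZZ$-linear combination of the two elementary ones
\begin{equation*}
\text{(R-GRC)}\quad\sum_{\lambda\in\partial^{=}Y}\lambda=0\quad\text{and}\quad\text{(R-Res)}\quad\sum_{\lambda:\,v^{-}(\lambda)=w}\lambda=0,
\end{equation*}
where $Y$ runs over connected components of $\Sigma_{>-j}$ free of marked poles and $w$ over vertices at level $-j$ with $X_{w}$ free of marked poles.

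The pairing of (R-GRC) with $\rho$ is exactly the global residue condition (4-ab) of Definition~\ref{def:twistedAbType} at level $L=-j$ imposed by $Y$, while the pairing of (R-Res) with $\rho$ is the residue theorem for $\eta_{w}$: by condition~(3) and $h=0$, the differential $\eta_{w}$ is holomorphic at every upward node of $X_{w}$ (those $\lambda$ with $v^{+}(\lambda)=w$), so its only nonzero residues occur at the downward nodes and equal $\rho(\lambda)$, forcing $\sum_{v^{-}(\lambda)=w}\rho(\lambda)=0$ since $X_{w}$ has no marked pole. Hence (R-Res) holds automatically for any $\rho\in R^{\rm res}$, and the existence of all $\rho_{j}$'s is equivalent to $\rho$ satisfying (R-GRC) at every level, i.e.\ to $\rho\in R^{\rm grc}$. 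For the dimensional consequence, complex coefficients make the filtration $V_{1}\supset V_{2}\supset\cdots\supset V_{N}=0$ split, so a compatible family $\{\rho_{j}\}$ on the graded pieces assembles uniquely into a linear functional $\rho_{*}\colon V_{1}\to\CC$ with $\rho_{*}([\lambda])=\rho(\lambda)$ for every $\lambda\in\Lambda$, and conversely any linear functional on $V_{1}$ restricts to such a family. Thus $R^{\rm grc}\cong\operatorname{Hom}(V_{1},\CC)$, so $\dim R^{\rm grc}=\dim V_{1}=\dim\Im\bigl(H_{1}(\Lambda)\to H_{1}(\Sigma\setminus P,Z)\bigr)$.

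The main obstacle is the decomposition step of the first paragraph: one has to verify carefully that \emph{every} $2$-chain $W$ with $\partial W\in V_{j}$ reduces modulo $V_{j+1}$ to the asserted integer combination of (R-GRC) and (R-Res) contributions, and in particular that the signs produced by the formula for $\partial X_{w}$ align (R-Res) with the residue theorem on $X_{w}$ rather than its negative. Once this combinatorial--topological bookkeeping is in place, the translation to residues is direct and the dimension count is immediate.
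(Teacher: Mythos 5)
Your argument follows essentially the same route as the paper: the relations in $V_j/V_{j+1}$ are generated by boundaries of pole-free $2$-chains, and pairing $\rho$ against them reproduces exactly the residue theorem on level-$(-j)$ components together with the global residue condition for the connected components of $\oG_{>-j}$; the paper compresses the converse into ``just a reformulation'' while you supply the bookkeeping, correctly noting that membership in $R^{\rm res}$ is what makes the (R-Res) relations harmless. Two small repairs are needed: the hypothesis on $W$ should read ``$\partial W$ is supported on $\Lambda_j$ as a chain in $H_1(\Lambda)$'' rather than ``$\partial W\in V_j$'' (the latter is vacuous, since $\partial W$ maps to zero in $H_1(\Sigma\setminus P,Z)$), and the closing claim that the $\rho_j$ assemble into a functional $\rho_*$ on $V_1$ with $\rho_*([\lambda])=\rho(\lambda)$ for all $\lambda$ is false in general --- the ungraded relation $\partial X_w$ for a pole-free middle-level vertex $w$ pairs with $\rho$ to give the sum of the down-going residues of $w$ (after the residue theorem kills the up-going ones), which need not vanish for $\rho\in R^{\rm grc}$. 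Neither point affects the conclusion: the dimension count only needs the bijection $R^{\rm grc}\cong\bigoplus_j(V_j/V_{j+1})^{*}$ together with $\sum_j\dim(V_j/V_{j+1})=\dim V_1$, which is exactly how the paper concludes.
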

\par
\begin{proof}
Given the collection of $\rho_j$, for every~$j$ and every connected component $Y$ of $\oG_{>-j}$ without
marked poles in the interior, the sum of the residues on all (down-going) edges of $Y$ is
zero, as one can see by summing the corresponding boundary cycles
in $H_1(\Sigma \setminus P, Z)$ and the edges to levels below~$-j$
contribute zero since $\rho_j$ maps $V_{j+1}$ to zero. This zero summation is precisely
the global residue condition ($4$-ab). The converse is just a reformulation of the global residue condition.
The second statement about $\dim R^{\rm grc}$ follows from the first, since $\rho$ is uniquely specified by any collection of $\rho_j \in (V_j/V_{j+1})^{*}$ and $\sum_{j=1}^{N-1} \dim(V_j/V_{j+1}) = \dim V_1$.
\end{proof}
\par
\begin{proof}[Proof of Theorem~\ref{thm:dimabtwd}]
The stratum $\omoduli(\mu)$ is locally modeled on the periods of the relative
homology group $H_1(\Sigma \setminus P,Z)$.
The space of twisted abelian differentials compatible with $\oG$ except for the global residue condition is locally modeled
on the periods of the homology group
$H_1(\Sigma \setminus \{P \cup \Lambda^\circ\}, \Lambda^+ \cup Z)$ by
applying Corollary~\ref{cor:period} to all components of $\Sigma \setminus \Lambda^{\circ}$ separately.
Since the residue map from this space of twisted differentials to $R^{\rm res}$
is dominant and equidimensional over its image (being the composition of
a local biholomorphism and a linear projection under period coordinates), it suffices to show that
\be \label{eq:aim}
h_1(\Sigma \setminus \{P \cup \Lambda^\circ \}, \Lambda^+ \cup Z)
- h_1(\Sigma \setminus P,Z) \= \dim R^{\rm res} - \dim R^{\rm grc}.
\ee
\par
Take the long exact sequence of homology of the triple $Z \subset \Lambda \cup Z \subset \Sigma \setminus P$
\[\begin{tikzpicture}
\matrix (m) [matrix of math nodes, row sep=0.10em,
column sep=2.0em, text height=1.5ex, text depth=0.25ex]
{
H_2(\Lambda \cup Z, Z) = 0 & H_2(\Sigma \setminus P, Z) & H_2(\Sigma \setminus P, \Lambda \cup Z) \\
H_1(\Lambda \cup Z, Z)  & H_1(\Sigma \setminus P, Z)
& H_1(\Sigma \setminus P, \Lambda \cup Z)  \\
H_0(\Lambda \cup Z, Z) & H_0(\Sigma \setminus P, Z)=0. & \\
};
\draw[->,font=\scriptsize,every node/.style={above}, rounded corners]
(m-1-1) edge  (m-1-2)
(m-1-2) edge  (m-1-3)
(m-1-3.east) --+(5pt,0)|-+(0,-7.5pt)-|([xshift=-5pt]m-2-1.west)--(m-2-1.west)
(m-2-1) edge  (m-2-2);
\draw[->,font=\scriptsize,every node/.style={above}, rounded corners]
(m-2-2) edge  (m-2-3)
(m-2-3.east) --+(5pt,0)|-+(0,-7.5pt)-|([xshift=-5pt]m-3-1.west)--(m-3-1.west)
(m-3-1) edge  (m-3-2)
;
\end{tikzpicture}\]
Let $c \coloneqq h_2(\Sigma \setminus P, \Lambda \cup Z) = h^0(\Sigma \setminus \{\Lambda \cup Z\}, P)$ by duality (see~\cite[Theorem 6.2.17]{spanier}), which is the number of connected components of $\Sigma \setminus \Lambda$ without poles in the interior. Let $\delta_P \coloneqq h_2(\Sigma\setminus P, Z) = h^0(\Sigma\setminus Z, P)$, which is $1$ if $P = \emptyset$ and is $0$ otherwise. In addition, note that $h_i(\Lambda \cup Z, Z) = h_i(\Lambda) $ for $i=0,1$ and the same for their cohomology,
 all of which are equal to the number of simple closed curves in $\Lambda$, i.e., the number of edges of $\oG$. Also note that by
applying the excision theorem to $\Lambda^\circ$ we have
\begin{equation}\label{eq:excision}
 h_i(\Sigma \setminus \{P \cup \Lambda^\circ\}, \Lambda^+ \cup \Lambda^- \cup Z) = h_i(\Sigma \setminus P, \Lambda \cup Z).
\end{equation}
Using these observations we can deduce from the long exact sequence that
\be \label{eq:h1basic}
h_1(\Sigma \setminus \{P \cup \Lambda^\circ\}, \Lambda^+ \cup \Lambda^- \cup Z)
- h_1(\Sigma \setminus P,Z) \= c - \delta_P.
\ee
In addition, the exact sequence and Proposition~\ref{prop:GRCviaHomo} imply that
\be \label{eq:Rdim}
\dim R^{\rm grc} \= h_1(\Lambda \cup Z, Z) \,-\, (c -\delta_P) \= h^0(\Lambda)\,-\, (c -\delta_P).
\ee
\par
Note that~\eqref{eq:excision} implies that
$c=h_2(\Sigma \setminus  \{P \cup \Lambda^\circ\},  \Lambda^+ \cup \Lambda^-
\cup Z)$.
Combining this with the long exact sequence of homology of the triple
$ \Lambda^+ \cup Z \subset \Lambda^+ \cup \Lambda^- \cup Z \subset
\Sigma \setminus \{P \cup \Lambda^\circ\}$, we obtain that
\be \label{eq:h1bdconv}
h_1(\Sigma \setminus \{P \cup \Lambda^\circ\}, \Lambda^+ \cup \Lambda^- \cup Z)
- h_1(\Sigma \setminus \{P \cup \Lambda^\circ\}, \Lambda^+ \cup Z)
\= c - c_{\rm M},
\ee
where $c_{\rm M} \coloneqq h_2(\Sigma \setminus  \{P \cup \Lambda^\circ\},
\Lambda^+ \cup Z)
= h^0(\Sigma \setminus  \{Z \cup \Lambda^\circ\}, \Lambda^- \cup P)$
is the number
of connected components of $\Sigma \setminus \Lambda$ containing no poles,
neither in the interior nor at the nodes by pinching the (lower boundary) components of $\Lambda$. Said differently,
$c_{\rm M}$ is the number of pole-free local maxima of $\oG$.
\par
Using the obvious relation $c = c_{\rm M} + c_{\rm L}$, combining~\eqref{eq:h1basic} with \eqref{eq:h1bdconv} and comparing
this to the difference of~\eqref{eq:Rdim} and~\eqref{eq:LLResThm}, we thus obtain the desired~\eqref{eq:aim}.
\end{proof}
\par
As a consequence of the above calculation we also obtain that
$$\dim R^{\rm res} - \dim R^{\rm grc} \= c_{\rm M} - \delta_P.$$
Namely, the number of independent global residue conditions is precisely the number of pole-free
local maxima of~$\oG$, up to one global redundancy if the set of marked poles $P$ is empty.
\par
Now we prove the case for $k\geq 2$, which is essentially an equivariant version of the preceding proof.
\par
\begin{proof}[Proof of Theorem~\ref{thm:dimtwkd}]
We first give the setting which allows us to model the stratum. Fix a twisted $k$-differential $(X,\eta)$ and a connected  normalized cover $(\whX,\whomega,\pi)$ satisfying the global residue condition for abelian differentials (4-ab). The assumption that $\whX$ is connected is not a restriction, since otherwise we consider $(k/d)$-differentials where $d$ is the number of connected components of $\whX$, and any deformation of a $d$-th power of a $(k/d)$-differential remains to
be a $d$-th power.
Take two topological surfaces $\wh\Sigma$ and $\Sigma$ with two disjoint unions of simple closed curves $\whLa \subset \wh \Sigma$ and $\Lambda\subset \Sigma$, such that $\whX$ and
$X$ are obtained by pinching curves in $\whLa$ and $\Lambda$ respectively and such that $\pi$ lifts to a cover from $\wh\Sigma$ to $\Sigma$. Denote by $P$ the set of poles of $\eta$ with pole order $\geq k$ in the interior of $\Sigma\setminus \Lambda$.
Denote by $Z$ the set of zeros along with the remaining poles in the interior of $\Sigma\setminus \Lambda$. We denote similarly by $\whP$ and $\whZ$ the sets of preimages under $\pi$ of~$P$ and $Z$ in the interior of  $\wh\Sigma\setminus \whLa$, which correspond to the interior poles and zeros of the lifted twisted differential $\wh\omega$ respectively.
\par
The deck transformation $\tau$ on $\whX$ induces a deck transformation on $\wh\Sigma$, which we still denote by~$\tau$.
We denote by~$E_i$ the eigenspaces of various homology groups where
$\tau$ acts by the eigenvalue~$\zeta$ (the chosen primitive $k$-th root of unity)
and denote the dimension of $E_i$ by~$e_i$. Similarly we use $E^i$ and $e^i$ for the $\zeta$-eigenspaces of cohomology groups and their dimensions. Let $\Sigma_k$ be the set of connected components of $\Sigma \setminus
\Lambda$ where the twisted $k$-differential $\eta$ is the $k$-th power of an abelian differential
and let $\Lambda_k \subset \Lambda$ be the subset of curves where the vanishing orders of $\eta$ at the corresponding nodes
are a multiple of~$k$. Note that $\Sigma_k$ and $\Lambda_k$ are invariant under small deformations of $\eta$, as any deformation of a $d$-th power of a $(k/d)$-differential remains to be a $d$-th power.
\par
The stratum $\komoduli(\mu)$ is locally modeled on the periods of
$E_1(\wh\Sigma \setminus \whP,\whZ)$ as discussed in Section~\ref{sec:DefOfMeroDiff}.
The support of the eigenspace $E_i(\whLa)$ for $i=0, 1$ is on the preimage
$\whLa_k = \pi^{-1}(\Lambda_k)$ as the preimage of every simple closed curve in $\Lambda \setminus \Lambda_k$ has fewer than $k$ components. Since by Proposition~\ref{prop:fewercomponents} the \whgrc~only imposes conditions to nodes
that correspond to $\whLa_k$, we define spaces of residue assignments at such nodes
\bes
R^{\rm grc}_k \, \subseteq \, R^{\rm res}_k \, \subseteq \, \CC^{e^0(\whLa_k)}\,\subseteq \, \CC^{h^0(\whLa_k)}.
\ees
The subspace $\CC^{e^0(\whLa_k)}$ consists of residue assignments that are equivariant under the deck transformation $\tau$.
The subspace $R^{\rm res}_k$ is further cut out by imposing the residue theorem to the lifts of the components in~$\Sigma_k$ that are not local maxima of $\oG$ and have no poles of order $\geq k$ in the interior, whose number we denote by~$c_{k, {\rm L}}$. The subspace~$R^{\rm grc}_k$ satisfies moreover the global residue condition ($4$-ab). As in~\eqref{eq:LLResThm} we obtain that
$$e^0(\whLa_k) - \dim R^{\rm res}_k = c_{k, {\rm L}}.$$
\par
The space of (lifts to the normalized covers of) twisted $k$-differentials
compatible with~$\oG$ except for the \whgrc~is modeled
on $E_1(\wh\Sigma \setminus \{\whP \cup \whLa^\circ \}, \whLa^+ \cup \whZ)$.
Since the residue map from this space is dominant to~$R^{\rm res}_k$ and equidimensional over its image, it suffices to show that
\bes
e_1(\wh\Sigma \setminus \{\whP \cup \whLa^\circ\}, \whLa^+ \cup Z)
- e_1(\wh\Sigma \setminus \whP, \whZ) \= \dim R^{\rm res}_k - \dim R^{\rm grc}_k
\ees
as in~\eqref{eq:aim}.
\par
Since by assumption $\wh\Sigma$ is connected,
$h_2(\wh\Sigma \setminus \whP, \whZ) = h^0(\wh\Sigma \setminus \whZ, \whP)$ is $0$ or~$1$. Note that if it is~$1$, the nontrivial contribution comes from
the class of $\wh\Sigma$ itself, which is fixed under $\tau$. Hence the $\zeta$-eigenspace of $H_2(\wh\Sigma \setminus \whP, \whZ)$ is trivial, which implies that
$e_2(\wh\Sigma \setminus \whP, \whZ) = 0$.
\par
Now the rest of the calculation is the same as in the abelian
case, after decorating all symbols with hats and using respective eigenspaces. We thus conclude that
\bes
e_1(\wh\Sigma \setminus \{\whP \cup \whLa^\circ\}, \whLa^+ \cup \whLa^- \cup \whZ)
- e_1(\wh\Sigma \setminus \whP, \whZ) \= c_{k} \,
\ees
as in~\eqref{eq:h1basic},
where $c_{k} \coloneqq e_2(\wh\Sigma \setminus \whP, \whLa \cup \whZ)
= e^0(\wh\Sigma \setminus \{\whLa \cup \whZ\}, \whP)$ is the number
of components in $\Sigma_k$ without poles of order $\geq k$ in the interior, and the analogue of $\delta_P$ is $e_2(\wh\Sigma \setminus \whP, \whZ) = 0$ as shown above.
Then we have
$$\dim R^{\rm grc}_k \= e^0(\whLa_k)- c_{k}$$
as in~\eqref{eq:Rdim}, and moreover
\bes
e_1(\wh\Sigma \setminus \{\whP \cup \whLa^\circ\}, \whLa^+ \cup \whLa^- \cup \whZ)
- e_1(\wh\Sigma \setminus \{\whP \cup \whLa^\circ\}, \whLa^+ \cup \whZ)
\= c_{k} - c_{k, {\rm M}} \,
\ees
as in~\eqref{eq:h1bdconv},
where $c_{k, {\rm M}} \coloneqq e_2(\wh\Sigma \setminus  \{\whP \cup \whLa^\circ\},
\whLa^+ \cup \whZ)
= e^0(\wh\Sigma \setminus  \{\whZ \cup \whLa^\circ\}, \whLa^- \cup \whP)$
is the number of components in $\Sigma_k$ that are local maxima
of $\oG$ with no poles of order $\geq k$, neither in the interior nor at the boundary. Since $c_{k} =  c_{k, {\rm M}} +
c_{k, {\rm L}}$, we thus obtain the desired result as before.
\end{proof}
\par
As a consequence of the above calculation we also obtain the following
interpretation of the \whgrc, once we fix the set of components $\Sigma_k$
on which the $k$-differentials are $k$-th powers of abelian differentials:
$$ \dim R^{\rm res}_k - \dim R^{\rm grc}_k \= c_{k, {\rm M}}.$$
Namely, the number of independent \whgrc~is precisely the number $c_{k, {\rm M}}$ of local maxima
of $\oG$ in $\Sigma_k$ with no poles of order $\geq k$.



\section{Examples of flat geometric constructions with $k$-differentials}\label{sec:examples}

In this section we apply the incidence variety compactification to generalize certain flat geometric surgeries from abelian and quadratic differentials to $k$-differentials. Moreover, we sketch a flat geometric approach for proving Theorem~\ref{thm:kmain}.

\begin{exa}[Breaking up a singularity]
In this example, we use Theorem~\ref{thm:kmain} to extend the notion of breaking up a zero of abelian and quadratic differentials (see~\cite{kozo1, lanneauquad}) to the case of $k$-differentials. Here we describe the construction for singularities with $m>-k$. One can similarly extend this construction to the case of breaking up a pole of order $\leq -rk$ into $r$ poles of order $\leq -k$ or even more general settings.

Let $(X_{0},\xi)\in\komoduli(m,m_{1},\dots,m_{n})$ be a $k$-differential with a singularity $z$ of order $m > -k$ that we want to break into $r$ singularities of orders~$\widetilde m_{i}$, where $\sum_{i=1}^r \widetilde m_i = m $. Let $(X,\eta)$ be the twisted $k$-differential of type $(\widetilde m_{1},\dots,\widetilde m_{r},m_{1},\dots,m_{n})$, where $X$ is the union of $X_{0}$ with a $\PP^{1}$ attached at $z$, the restriction $\eta|_{X_0} = \xi$, and $\eta|_{\PP^1}$ is a $k$-differential with singularities of order $\widetilde m_{i}$ in the smooth locus of $\PP^{1}$ and with a pole of order $m+2k$ at the node $z$. If $(X,\eta)$ satisfies the global $k$-residue condition (4) in Definition~\ref{def:GRCk}, then the family of plumbed $k$-differentials in the proof of Theorem~\ref{thm:kmain} breaks $z$ into $r$ singularities of orders $\widetilde m_{1},\ldots, \widetilde m_{r}$, on smooth curves.

In this case condition (4) does not hold if and only if $\xi$ is the $k$-th power of a holomorphic abelian differential and every differential in the stratum $\komoduli[0](\widetilde m_{1},\dots,\widetilde m_{r},-2k-m)$ has non-zero $k$-residue at the pole of order $2k+m$. On the other hand if there exists a differential in the stratum $\komoduli[0](\widetilde m_{1},\dots,\widetilde m_{r},-2k-m)$ with zero $k$-residue at the pole of order $2k+m$, then the operation of breaking up such singularity can be performed locally. Otherwise we need to add a modification differential $\phi$ to $\xi$ as in the proof of Theorem~\ref{thm:kmain}, and hence the smoothing construction may fail to be local. For instance, this explains the failure of breaking up locally a zero of even degree into two zeros of odd degrees in the case of quadratic differentials as remarked in~\cite{mz}.

 A consequence of the above discussion is that if $k\nmid m$, then one can always break up locally a singularity of order $m$ into $r$ singularities of orders that add up to $m$.
 \end{exa}

\begin{exa}[Bubbling a handle]
Another useful construction developed for abelian and quadratic differentials is bubbling a handle (see \cite{kozo1, lanneauquad}). Given a quadratic differential $(X,q)$, this surgery increases the genus of $X$ by one and the order of a zero of $q$ by four, while preserving the orders of the other singularities. Here we generalize this operation to the case of $k$-differentials.

Let $(X_{0},\xi)\in\komoduli(m,m_{1},\dots,m_{n})$ be a $k$-differential with a singularity $z$ of order $m>-k$ at which we want to bubble a handle. Let $(X,\eta)$ be the twisted $k$-differential of genus $g+1$ and of type $(m+2k,m_{1},\dots,m_{n})$, where $X$ is the union of $X_{0}$ with a torus $X_{1}$ attached at $z$, the restriction $\eta|_{X_0} = \xi$, and $\eta|_{X_1}$ is a $k$-differential that has a zero of order $m+2k$ at some point of~$X_1$ and a pole of order $-m-2k$ at the node. If $(X,\eta)$ satisfies condition (4) in Definition~\ref{def:GRCk}, then the family of plumbed $k$-differentials obtained from $(X,\eta)$ gives the operation of bubbling a handle at $z$.

Note that there always exists a $k$-differential in the stratum $\komoduli[1](m+2k,-2k-m)$ which has zero $k$-residue at the pole. If $k\nmid m$, the claim follows from definition. If $k \mid m$, we can simply take the $k$-th power of an abelian differential of type $(m/k + 2, -m/k -2)$. Moreover, for every stratum different from $\Omega^{2}\moduli[1](4,-4)$, there exists a primitive differential in $\komoduli[1](m+2k,-2k-m)$ with vanishing $k$-residue at the pole (see \cite{geta}).  As a result, $(X,\eta)$ satisfies condition (4), and hence it is always possible to bubble a handle at such a singularity $z$ of a $k$-differential. This bubbling can furthermore be performed in such a way that the result is a primitive $k$-differential, even starting from the $k$-th power of an abelian differential --- the only exception being trying to bubble a handle at a regular point of the square of an holomorphic differential.
\end{exa}

\begin{exa}[Flat geometric smoothing]
As in the case of abelian differentials, sufficiency of the conditions in Theorem~\ref{thm:kmain} can also be explained by a flat geometric construction in the sense of $(1/k)$-translation structure, without passing to the canonical cover. We illustrate the idea via the following example. The reader may refer to~\cite[Paragraph~5]{plumb} for more details on related objects that will be used below.

Let $(X,\eta)$ be a twisted $k$-differential compatible with a given level graph with two levels, such that the top level contains a single component $(X_{0},\eta_{0})$, and the lower level consists of several components $(X_{a},\eta_{a})$. Suppose $\eta_{0}$ is the $(k/d)$-th power of a primitive $d$-differential. Let $q_{i}$ be all the vertical nodes of $X$, with $q_{i}^-\in X_{a(i)}$ in the lower level. On $X_{0}$ we want to insert a line segment representing $\Resk_{q_{i}^{-}} \eta$ such that its middle point is at $q_{i}^+$. By this, we mean that the segment is one of the $k$-th roots of $\Resk_{q_{i}^{-}} \eta$. If $d\neq k$, then we can choose any such roots. If $d=k$, we want to choose the roots such that the corresponding sum factor in Equation~\eqref{eq:cancellation1} vanishes. Such a choice exists by the global $k$-residue condition. We divide each of these segments into $d$ segments of equal length $\tilde{r}_{i,j}$, and label the middle point of $\tilde{r}_{i,j}$ by $q_{i,j}$ for $1\leq j\leq d$.

We next introduce the definition of a residue slit, as a collection of broken lines in the surface
around which we will modify neighborhoods of size given by the $k$-th roots of the $k$-residues.
Given the tuple $T = (\zeta^{jk/d}\tilde{r}_{i,j})_{\left\{i,j\right\}}=(r_{1}, \ldots, r_N)$ with cardinality $N$, we fix a permutation $\pi \in S_N$ such that the slopes of $r_{\pi(1)},
\ldots,r_{\pi(N)}$ are monotone. Let $P = P(T,\pi)$ be the polygon whose
edges are given by the vectors $r_{\pi(1)}, \ldots,r_{\pi(N)}$ consecutively. It follows
that $P$ is convex. Let~$B(P)$ be the barycenter of~$P$. Let~$p$ be a point in~$X_{0}$,
disjoint from the singularities of $\eta$. Shrinking $\eta_i$ if necessary, we place $P$ inside $X_0$ with $B(P) = p$.
A {\em $k$-residue slit} for~$(T,\pi)$ is then defined to be a collection of broken lines $\lbrace b_{i,j}\rbrace$ with
the following properties:
\begin{itemize}
\item Each broken line $b_{i,j}$ starts from $q_{i,j}$ and connects to the middle point of the edge $\zeta^{jk/d}\tilde{r}_{i,j}$ of the polygon~$P$. We denote by $b_{i,j}^{(\ell)}$ the $\ell$-th line segment of $b_{i,j}$ and by $\theta(b_{i,j}^{(\ell)})$ the slope of $b_{i,j}^{(\ell)}$.
\item The broken lines $b_{i,j}$ do not intersect each other and they are disjoint from the singularities of $\eta$.
 \item The slopes $\theta(b_{i,j}^{(\ell)})$ are different from that of $\pm\zeta^{jk/d} \tilde{r}_{i,j}$ for all $j\in\left\{0,\dots,\frac{k}{d}-1\right\}$.
 \item The holonomy $h(b_{i,j})$ associated to $b_{i,j}$ is $\zeta^{jk/d}$ with respect to the $(1/k)$-translation structure of $X_0$.
\end{itemize}
\par
For $t\in (0, \varepsilon)$ sufficiently small, we define the surface $X_{0,t} = X_{0,t}(T,\pi)$
obtained by modifying neighborhoods of size $tT$ around the residue slit as follows. Remove $P(T,\pi)$
from $X_0$. In the case that $\langle h(b_{i,j}^{(\ell)})^{-1} \theta(b_{i,j}^{(\ell)}), \tilde{r}_{i,j} \rangle > 0$, we remove neighborhood parallelograms swept out by planar segments with holonomy vector~$t\cdot h(b_{i,j}^{(\ell)})\tilde{r}_{i,j}$ centered along $b_{i,j}^{(\ell)}$ in general, and glue the parallel sides of these removed parallelograms in pairs. On the other hand for each of the segments where $\langle h(b_{i,j}^{(\ell)})^{-1}\theta(b_{i,j}^{(\ell)}), \tilde{r}_{i,j} \rangle < 0$, we add a parallelogram swept out by segments of holonomy~$t\cdot h(b_{i,j}^{(\ell)})\tilde{r}_{i,j}$ to the existing surface and glue the pieces as indicated in \cite[Figure~16]{plumb}.
\par
 \begin{figure}[ht]
 \centering
\begin{tikzpicture}[scale=1.5, 
  ->-/.style={%
      postaction={decorate},decoration={%
          markings,mark=at position #1 with {\arrow{stealth}}%
      }%
  },->-/.default=.5,
   -<-/.style={
      postaction={decorate},decoration={%
          markings,mark=at position #1 with {\arrowreversed{stealth}}%
      }%
  },-<-/.default=.5]

 \begin{scope}
 \begin{scope}
 \clip  (-2,-2)  --  ++(4,0) -- ++(0,4)-- ++(-4,0)--   ++(0,-4);
    \fill[ color=black!10!] (0,0) circle (3);
\end{scope}
 \draw  (-2,-2)  -- coordinate[pos=.5](a1) ++(4,0) -- coordinate[pos=.5](b1)  ++(0,4)-- coordinate[pos=.5](a2) ++(-4,0)-- coordinate[pos=.5](b2)  ++(0,-4);
\node[below] at (a1) {$1$};
\node[right] at (b1) {$2$};
\node[above] at (a2) {$1$};
\node[left] at (b2) {$2$};

\draw (-1,0.5) coordinate (c0) --coordinate[pos=.5](c1)coordinate[pos=.833](c2)coordinate[pos=0.166](c3)coordinate[pos=1] (c4) ++(120:1);
\draw (1,0.5) coordinate (d0) --coordinate[pos=.5](d1)coordinate[pos=.833](d2)coordinate[pos=0.166](d3)coordinate[pos=1] (d4) ++(60:1);
\node[below,rotate=-60] at (c1) {$3$};
\node[below,rotate=120] at (c1) {$4$};
\node[xshift=-.2cm,yshift=.05cm,rotate=60] at (d1) {$3$};
\node[above,rotate=-120] at (d1) {$4$};

 \fill (c0) circle (2pt);        \fill (d4) circle (2pt);
\fill[white] (c4) circle (2pt);        \fill[white] (d0) circle (2pt);
\draw (c4) circle (2pt);        \draw (d0) circle (2pt);

\fill (0,-1) circle (1pt); \node[below] at (0,-1) {$q_{0}$};
\fill (-1/3,-1) circle (1pt); \node[below] at (-1/3,-1) {$q_{1}$};
\fill (1/3,-1) circle (1pt); \node[below] at (1/3,-1) {$q_{-1}$};
\draw[thin] (-1/2,-1) -- (1/2,-1);

\draw[dashed] (0,.405)-- ++(240:1/3) coordinate[pos=.5](t1)-- ++(0:1/3) coordinate[pos=.5](t2) -- (0,.405)  coordinate[pos=.5](t3);

\draw[dotted] (-1/3,-1) -- ++ (90:.1) -- ++ (160:.976) -- (c1) -- (t1);
\draw[dotted] (0,-1) --  (t2);
\draw[dotted] (1/3,-1) -- ++ (90:.1) -- ++ (20:.976) -- (d1) -- (t3);

 \end{scope}

 \begin{scope}[xshift=5cm]
   \fill[ color=black!10!] (0,-0.5) circle (1.4);

    \fill (0,-1) circle (2pt);

\draw (0,-1) --coordinate[pos=.5](e1) ++(150:1/1.711)coordinate[](f1);
\draw (0,-1) --coordinate[pos=.5](e2) ++(30:1/1.711)coordinate[](g1);
\draw (f1) -- ++(90:1.4)coordinate[pos=.5](e3);
\draw (g1) -- ++(90:1.4)coordinate[pos=.5](e4);

\fill[white] (g1) circle (2pt);        \fill[white] (f1) circle (2pt);
\draw (g1) circle (2pt);        \draw (f1) circle (2pt);

     \fill[color=white]
    (0,-1) --  ++(30:1/1.711) --++(90:1.7)  -- ++(180:1)--++(-90:1.7) -- cycle;
    \draw (0,-1) --coordinate[pos=.6](e1) ++(150:1/1.711)coordinate[](f1);
\draw (0,-1) --coordinate[pos=.6](e2) ++(30:1/1.711)coordinate[](g1);
\draw (f1) -- ++(90:1.4)coordinate[pos=.5](e3);
\draw (g1) -- ++(90:1.4)coordinate[pos=.5](e4);

\node[left] at (e3) {$5$};
\node[right] at (e4) {$5$};
\node[left,rotate=60] at (e1) {$6$};
\node[right,rotate=-60] at (e2) {$6$};
 \end{scope}

\end{tikzpicture}

 \caption{Flat geometric representation of a twisted cubic differential and the residue slit}
\label{fig:triplepres}
\end{figure}
Let us carry out the above construction in a simple but already interesting example. Consider the stratum $\Omega^{3}\moduli[2](3,3,1,-1)$, where the twisted cubic differential $(X, \eta)$ is given by $(X_{0},\eta_{0})\in\Omega^{3}\moduli[2](3,3,1,0,-1)$  and $(X_{1},\eta_{1})\in\Omega^{3}\moduli[0](1,-1,-6)$ such that a marked ordinary point $q$ of $X_{0}$ is attached to the pole of order $6$ of $X_{1}$. In particular, this example illustrates the operation of breaking up a regular point into a simple zero and a simple pole for cubic differentials. We present $\eta_0$ and $\eta_1$ by using $(1/3)$-translation structure along with the residue slit in Figure~\ref{fig:triplepres}.

 \begin{figure}[ht]
 \centering
\begin{tikzpicture}[scale=2, 
  ->-/.style={%
      postaction={decorate},decoration={%
          markings,mark=at position #1 with {\arrow{stealth}}%
      }%
  },->-/.default=.5,
   -<-/.style={
      postaction={decorate},decoration={%
          markings,mark=at position #1 with {\arrowreversed{stealth}}%
      }%
  },-<-/.default=.5]

 \begin{scope}
 \begin{scope}
 \clip  (-2,-2)  --  ++(4,0) -- ++(0,4)-- ++(-4,0)--   ++(0,-4);
    \fill[ color=black!10!] (0,0) circle (3);
\end{scope}
 \draw  (-2,-2)  -- coordinate[pos=.5](a1) ++(4,0) -- coordinate[pos=.5](b1)  ++(0,4)-- coordinate[pos=.5](a2) ++(-4,0)-- coordinate[pos=.5](b2)  ++(0,-4);

\draw (-1,0.5) --coordinate[pos=0](c0)coordinate[pos=.5](c1)coordinate[pos=.833](c2)coordinate[pos=0.166](c3)coordinate[pos=1](c4) ++(120:1);
\draw (1,0.5) --coordinate[pos=0](d0)coordinate[pos=.5](d1)coordinate[pos=.833](d2)coordinate[pos=0.166](d3)coordinate[pos=1](d4) ++(60:1);

\draw (0,-1.5) --coordinate[pos=.5](e1) ++(150:1/1.711)coordinate[](f1);
\draw (0,-1.5) --coordinate[pos=.5](e2) ++(30:1/1.711)coordinate[](g1);

\draw (0,.405) coordinate(t1)-- ++(240:1/3) coordinate(t2)-- ++(0:1/3) coordinate(t3)-- cycle;

\draw[thin] (f1)  -- ++ (90:.1) coordinate(r1) -- ++ (160:.976) coordinate(s1) -- (c2);
\draw[thin] (g1)  -- ++ (90:.1) -- ++ (20:.976) coordinate(s2) -- (d2);

\draw[dashed] (r1) -- ++(1/3,0)coordinate(r2)-- ++(1/3,0)coordinate(r3)-- ++(1/3,0)coordinate(r4);

\draw[thin] (t2) -- (r2)  -- ++ (160:.976) coordinate (s3) -- (c3);
\draw[thin] (t3) -- (r3)  -- ++ (20:.976)coordinate (s4) -- (d3);

    \fill (0,-1.5) circle (1pt);
    \fill[white] (g1) circle (1pt);        \fill[white] (f1) circle (1pt);
\draw (g1) circle (1pt);        \draw (f1) circle (1pt);

   \fill[color=white]
    (0,-1.5) --  (f1) --(r1)  -- (s1) -- (c2) -- (t1) -- (d2) -- (s2) -- (r4) --(g1) -- cycle;
    \fill[ color=black!10!] (r2) -- (s3)-- (c3) -- (t2) -- cycle;
    \fill[ color=black!10!] (r3) -- (s4) -- (d3) -- (t3) -- cycle;

 \fill (c0) circle (1pt);        \fill (d4) circle (1pt);
\fill[white] (c4) circle (1pt);        \fill[white] (d0) circle (1pt);
\draw (c4) circle (1pt);        \draw (d0) circle (1pt);

\node[left] at (f1) {$5'$};
\node[right] at (g1) {$5'$};
\node[left,rotate=60] at (e1) {$6$};
\node[right,rotate=-60] at (e2) {$6$};

\draw (-1,0.5) --coordinate[pos=.5](c1)coordinate[pos=.833](c2)coordinate[pos=0.166](c3) ++(120:1);
\draw (1,0.5) --coordinate[pos=.5](d1)coordinate[pos=.833](d2)coordinate[pos=0.166](d3) ++(60:1);

\draw (0,-1.5) --coordinate[pos=.5](e1) ++(150:1/1.711)coordinate[](f1);
\draw (0,-1.5) --coordinate[pos=.5](e2) ++(30:1/1.711)coordinate[](g1);

\draw[dashed] (0,.405) coordinate(t1)-- ++(240:1/3) coordinate(t2)-- ++(0:1/3) coordinate(t3)-- cycle;

\draw[thin] (f1)  -- ++ (90:.1) coordinate(r1) -- ++ (160:.976) coordinate(s1) --(c2) coordinate[pos=.8](l1);
\draw[thin] (g1)  -- ++ (90:.1) -- ++ (20:.976) coordinate(s2) -- (d2) coordinate[pos=.8](l2);

\draw[dashed] (r1) -- ++(1/3,0)coordinate(r2)-- ++(1/3,0)coordinate(r3)-- ++(1/3,0)coordinate(r4);
\draw[dashed] (f1) -- (g1);
\draw[dashed] (s1) -- (s3);
\draw[dashed] (c3) -- ++(0:-1/3);
\draw[dashed] (c3) -- ++(60:1/3);
\draw[dashed] (s2) -- (s4);
\draw[dashed] (d3) -- ++(0:1/3);
\draw[dashed] (d3) -- ++(120:1/3);

\draw[thin] (t2) -- (r2) coordinate[pos=.5](p5)  -- ++ (160:.976) -- (c3);
\draw[thin] (t3) -- (r3) coordinate[pos=.5](q5) -- ++ (20:.976) -- (d3);

\node[left] at (p5) {$5''$};
\node[right] at (q5) {$5''$};

\draw[thin] (d2)  -- (t1)  coordinate[pos=.2](l3);
\draw[thin] (d3)  -- (t3);
\draw[thin] (c2)  -- (t1)  coordinate[pos=.2](l4);
\draw[thin] (c3)  -- (t2);

\node[left] at (l1) {$1$};
\node[right,rotate=120] at (l3) {$1$};
\node[right] at (l2) {$2$};
\node[left,rotate=-120] at (l4) {$2$};

 \end{scope}
\end{tikzpicture}

 \caption{A plumbed surface in $\Omega^{3}\moduli[2](3,3,1,-1)$}
\label{fig:tripleresidue}
\end{figure}
In Figure~\ref{fig:triplepres}, the point $q$ coincide with $q_{0}$ and the segment representing the $k$-residue is the horizontal one. The polygon $P(T,\pi)$ is the equilateral triangle in dashed lines. The residue slit is pictured in dotted lines.
The cubic differential obtained after gluing in the lower level surface $t\eta_1$ at $q$ and opening up the residue slit is presented in Figure~\ref{fig:tripleresidue}, where the unlabeled edge identifications are clear and the identifications denoted by $1$ and $2$ are given by composing translation with a rotation of angle $2\pi/3$. As $t \to 0$, i.e., if we shrink the removed residue slit neighborhood, the surface goes to $(X_0, \eta_0)$. Up to scaling, if alternatively we expand everything arbitrarily large compared to the residue slit neighborhood, we then obtain $(X_1, \eta_1)$. A rigorous proof that justifies convergence in the sense of the incidence variety compactification can be found in~\cite[Paragraph~5.3]{plumb}.

\end{exa}


\printbibliography
\end{document}